\documentclass[a4paper,10pt]{article}
\usepackage[utf8x]{inputenc}
\usepackage{dsfont}
\usepackage{amsmath,amsfonts,amssymb,amsthm}
\usepackage{pb-diagram,graphicx, subcaption}
\usepackage{color}
\usepackage{fullpage}
\usepackage{tikz}
\usepackage{stmaryrd}

\newcommand{\dd}{\mathcal{D}}
\newcommand{\R}{\mathds{R}}
\newcommand{\Om}{\Omega}
\newcommand{\Omc}{\Omega^c}

\newcommand{\D}{\mathcal{D}}
\newcommand{\OO}{\mathds{P}(\D)}
\newcommand{\C }{\mathcal{C}}

\newcommand{\divv}{\operatorname{div}}
\newcommand{\dn}{\partial_n}
\newcommand{\sol}{u}
\newcommand{\Jop}{\mathcal{P}}
\newcommand{\tz}{t_0}
\newcommand{\Tt}{T_t}

\newcommand{\po}{{\partial \Om}}
\newcommand{\pd}{{\partial \D}}
\newcommand{\meas}{h}

\newcommand{\dt}{{\frac{d}{dt}}}

\newcommand{\Sb}{\mathbf{S}}

\newcommand{\transp}{\mathsf{T}}

\newcommand{\ben}{\begin{equation}}
\newcommand{\een}{\end{equation}}
\newcommand{\ma}{\mathds{A}}
\newcommand{\mI}{\mathds{I}}


\providecommand{\Div}{\operatorname{div}}          



\providecommand*{\meas}[2][]{\Meas{#1}{#2}}

\providecommand{\Det}{\operatorname{det}}                    







\newcommand{\VS}{{\mathbf{S}}}

\newcommand{\VV}{V}









\providecommand{\bbR}{\mathbb{R}}

\pdfsuppresswarningpagegroup=1
\captionsetup[subfigure]{width=0.9\textwidth}

\theoremstyle{plain}

\newtheorem{theorem}{Theorem}[section]
\newtheorem{proposition}[theorem]{Proposition}
\newtheorem{lemma}[theorem]{Lemma}
\newtheorem{corollary}[theorem]{Corollary}

\newtheorem{assumption}[theorem]{Assumption}

\theoremstyle{definition}
\newtheorem{definition}[theorem]{Definition}
\newtheorem{remark}[theorem]{Remark}

\definecolor{preto}{RGB}{0,0,0}
\definecolor{vermelho}{RGB}{254,49,49}
\definecolor{azul_escuro}{RGB}{0,0,255}
\definecolor{cinza}{RGB}{195,195,195}
\definecolor{papel_amarelo}{RGB}{255,249,240}

\title{A shape optimization approach for electrical impedance tomography with point measurements}
\author{Yuri Flores Albuquerque\thanks{Instituto de Matem\'atica e Estat\'istica,
Universidade de S\~{a}o Paulo, Rua do Mat\~{a}o 1010, 05508-090, S\~{a}o Paulo, Brazil (\texttt{yuri.falbu@gmail.com, laurain@ime.usp.br})} \and Antoine Laurain\footnotemark[1] \and Kevin Sturm\thanks{Technische Universit\"at Wien, 
Institut f\"ur Analysis und Scientific Computing,
Wiedner Hauptstra\ss e 8-10, 1040 Wien, Austria (\texttt{kevin.sturm@asc.tuwien.ac.at})}}
\date{}

\begin{document}
\maketitle
\begin{abstract}
Working within the class of piecewise constant conductivities, the inverse problem of electrical impedance tomography can be recast as a shape optimization problem where the  discontinuity interface is the unknown.  
Using Gr\"oger's $W^{1}_p$-estimates for mixed boundary value problems, the averaged adjoint method is extended to the case of Banach spaces, which allows to compute the derivative of shape functionals involving point evaluations. 
We compute the corresponding distributed expression of the shape derivative and show that it may contain Dirac measures in addition to the usual domain integrals.
We use this distributed shape derivative to devise a numerical algorithm, show various numerical results supporting the method, and based on these results we discuss the influence of the point measurements patterns on the quality of the reconstructions.
\end{abstract}
\begin{center}
\noindent {\bf Mathematics Subject Classification.} 49Q10, 35Q93, 35R30, 35R05 
\end{center}

\section{Introduction}
Electrical impedance tomography (EIT) is a low cost, noninvasive, radiation free and portable
imaging modality with various applications in medical imaging, geophysics, civil engineering and nondestructive testing.
In particular, it is an active field of research in medical imaging, where devices based on EIT are already used in practice, with applications to lung imaging such as diagnosis of pulmonary embolism \cite{MARTINS2019}, monitoring patients undergoing mechanical ventilation, breast imaging, 
acute cerebral stroke, or cardiac activity monitoring; we refer to the reviews \cite{Bera_2018,MR1955896} and the references therein.

Two mathematical models for EIT have been actively investigated over the last few decades. 
The {\it continuum model} has been widely studied in the case where applied currents and voltage measurements are supposed to be known on the entire boundary.
This model is closely related to the {\it Calderón problem}, which has attracted the attention of a large community of mathematicians in the last decades; see \cite{Bera_2018,MR1955896}.
It consists in determining the uniqueness and stability properties of the conductivity reconstruction  
when the full Dirichlet-to-Neumann map is known, which corresponds, roughly speaking, to the availability of an infinite amount of applied currents and boundary measurements.

Despite its usefulness, the continuum model is not realistic for applications, indeed, in the case of medical imaging for instance, it does not take into account the fact that currents are applied through electrodes attached by small patches to the patient, and that voltage measurements are also performed through these electrodes. 
Therefore, the applied currents and voltage measurements are available only on a subset of the boundary. 
In the literature, this situation is referred to as {\it partial measurements} as opposed to {\it full measurements} for the standard Calderón problem.
This leads to the more realistic {\it electrode model} \cite{MR1174044}, which also takes into account the electro-chemical reaction occurring at the interface between the electrode and the skin.

As the field of EIT is growing more mature, the awareness of these restrictions has increased also among mathematicians.
As a consequence, the study of the continuum model with partial boundary data has attracted much attention in the recent years.
Uniqueness results with partial boundary data in dimension $n\geq 3$ were obtained in \cite{MR2262748}, in \cite{MR2299741} for $\C^2$-conductivities, and in \cite{MR2209749} for $W^{3/2+\delta,2n}$-conductivities with $\delta>0$. 
Uniqueness results were extended to conductivities of class $\C^{1,\infty}(\overline{\Om})\cap H^{3/2}(\Om)$ and conductivities in $W^{1,\infty}(\Om)\cap H^{3/2+\delta}(\Om)$ with $0 <\delta < 1/2$ arbitrarily small but fixed in \cite{MR3551265}. 
We refer to \cite{MR3221605} for a review of theoretical results on the Calderón problem with partial data.
Regarding numerical methods, sparsity priors are used to improve the reconstruction using partial data in  \cite{MR3462483,MR3447196}. 
D-bar methods in two dimensions were investigated in \cite{MR3642251,MR3626801}  and resistor networks in \cite{MR2608623}.

Due to the small size of the electrodes compared to the rest of the boundary in many practical applications, the idea of modeling small electrodes by point
electrodes using Dirac measures is appealing from the mathematical standpoint.
This point of view has been introduced  as a {\it point electrode model} and justified in \cite{MR2819201}; see also \cite{MR3400033,MR2553181,MR3023421}.
We also observe that mathematical models using point measurements are highly relevant for  large-scale inverse problems such as full-waveform inversion where the dimensions of the receivers are several orders of magnitude smaller than the dimensions of the physical domain of the model; see \cite{Virieux2009}.   

The problem of reconstructing conductivities presenting sharp interfaces in EIT, also known as the inclusion detection problem, has attracted significant interest in the last three decades, starting from the pioneering works \cite{MR873244,MR1017325}.
Several numerical methods have been developed for reconstructing discontinuous conductivities including the factorization method introduced in \cite{MR1776481,MR1662460}; see also the review \cite{MR3095385},  
monotonicity-based shape reconstructions \cite{MR3621830,MR3628886,MR3126995}, the enclosure method for reconstructing the convex hull of a set of inclusions \cite{MR1694840,MR1776482}, the MUSIC algorithm for determining the locations of small inclusions \cite{MR2168949}, a  nonlinear integral equation method  \cite{MR2309659}, and topological derivative-based methods \cite{MR2888256,MR2517928,MR2536481,MR2886190}.
Shape optimization techniques, which are the basis of the present paper, have also been employed to tackle this problem: based on level set methods \cite{MR2132313,MR3535238}, for a polygonal partition of the domain \cite{MR3723652}, using second-order shape sensitivity \cite{MR2407028}, and using a  single boundary measurement \cite{MR2329288, MR1607628}.

In this framework, the conductivity is assumed to be piecewise constant or piecewise smooth, and it is then convenient to reformulate the problem as a shape optimization problem \cite{MR1215733} in order to investigate the sensitivity with respect to perturbations of a trial interface. 
This sensitivity analysis relies on the calculation of the {\it shape derivative}, which can be written either in a strong form, usually as a boundary integral, or in a weak form which often presents itself as a domain integral involving the derivative of the perturbation field.
The usefulness of the weak form of the shape derivative, often called {\it domain expression} or {\it distributed shape derivative}, is known since the pioneering works  \cite{MR800331,MR860040} but has been seldom used since then in comparison with the boundary expression. 
A revival of the distributed shape derivative has been observed since \cite{MR2642680}, and 
this approach has been further developed in the context of EIT and level set methods in \cite{MR3535238}, see also \cite{MR3660456}.

An important contribution of the present paper is to extend the framework developed in \cite{MR3535238} to the case of point measurements in EIT.
The main issue for shape functionals involving point evaluations is that one needs the continuity of the state, for which the usual $H^1$-regularity in two dimensions is insufficient.
Functionals with point evaluations and pointwise constraints have been studied intensively in the optimal control literature; see \cite{MR2551487,MR2583281}.
In particular, a convenient idea from optimal control is to use Gr\"oger's  $W^{1}_p$-estimates \cite{MR990595,a_GRRE_1989a} with $p>2$  to obtain continuity of the state in two dimensions.  
Here, we adapt this idea in the context of shape optimization and of the averaged adjoint method, in the spirit of \cite{MR3584578}.
We show that in general the shape derivative  contains Dirac measures, and that the adjoint state is slightly less regular than $H^1$ due to the presence of Dirac measures on the right-hand side.
Another important contribution of this paper is to investigate the relations between the domain and boundary expressions of the shape derivative depending on the interface regularity, and  the minimal regularity of the interface for which the boundary expression of the shape derivative can be obtained in the context of EIT with point measurements. 

We start by recalling in Section \ref{sec:preliminaries} the  $W^{1}_p$-estimates for mixed boundary value problems introduced in \cite{MR990595}.
We then formulate in Section \ref{sec:EIT_point} the shape optimization approach for the inverse problem of EIT and show how the averaged adjoint method can be adapted to the context of Banach spaces. 
Then, we compute the distributed shape derivative and prove its validity for a conductivity inclusion which is only open.
When the inclusion is Lipschitz polygonal or $\C^1$, we also obtain the boundary expression of the shape derivative.
Finally, in Section \ref{sec:numerics} we explain the numerical algorithm based on the distributed shape derivative and we present a set of results showing the efficiency of the approach. 
Introducing an error measure for the reconstruction, we also discuss the quality of reconstructions depending on the number of point measurements, applied boundary currents and noise level.
More details about the averaged adjoint method are given in an appendix for the sake of completeness.

\section{Preliminaries}\label{sec:preliminaries}
\subsection{Mixed boundary value problems in $W^1_p$}\label{sec:prel1}
In this section we recall the framework introduced in \cite{MR990595} for obtaining a $W^1_p$-estimate for solutions to
mixed boundary value problems for second order elliptic PDEs.
\begin{definition}[see \cite{MR990595,MR2551487}]\label{def1b}
Let $\D\subset\R^2$ and $\Gamma\subset\partial\D$ be given. We say that $\D\cup\Gamma$ is
regular (in the sense of Gr\"oger) if $\D$ is a bounded Lipschitz domain, $\Gamma$ is a relatively
open part of the boundary $\pd$, $\Gamma_0 := \pd\setminus\Gamma$ has positive measure, and $\Gamma_0$ is a finite
union of closed and nondegenerated (i.e., not a single point) curved pieces of $\pd$. 
\end{definition}

Let $\D,\Gamma$ and $\Gamma_0$ be  as in Definition \ref{def1b} and define for $d\geq 1$:
$$ \C^\infty_\Gamma (\D,\R^d): = \{ f|_{\D}\ | \ f\in\C^\infty(\R^2,\R^d),\, \operatorname{supp} f\cap\Gamma_0 = \emptyset\}.$$
In the scalar case, i.e. for $d=1$, we write $\C^\infty_\Gamma (\D)$ instead of $\C^\infty_\Gamma (\D,\R)$ and use a similar notation for the other function spaces. 
We denote by $W^1_p(\D)$, $1\le p \le \infty$ the Sobolev space of weakly differentiable functions with weak derivative in $L^p(\D)$. 
For $p,p'\geq 1$ satisfying $\frac{1}{p} + \frac{1}{p'} = 1$, we define the Sobolev space
$$W^1_{\Gamma,p}(\D,\R^d) := \overline{\C^\infty_\Gamma (\D,\R^d)}^{W^1_p}, $$
where $W^1_p$ stands for the usual norm in $W^{1,p}(\D,\R^d)$, and the dual space
$$W^{-1}_{\Gamma,p}(\D,\R^d) := (W^1_{\Gamma,p'}(\D,\R^d))^*. $$
We use the notation $\text{id}$ for the identity function in $\R^2$, and $\mI$ for the $2\times 2$ identity matrix.

Let $2\leq q <\infty$ and $1\leq q'\leq 2$ satisfying $\frac{1}{q} + \frac{1}{q'} = 1$.
Let $\ma \in L^\infty(\D)^{2\times 2}$ be a matrix-valued function satisfying for all $\eta,\theta\in \bbR^2$ and 
$x\in \overline \D$:
\begin{align}\label{assump2}
\ma(x)\theta\cdot \theta &\ge m|\theta|^2 \text{ and }    |\ma(x)\eta|  \le M|\eta|, \text{ with } m>0 \text{ and }M>0, 
\end{align}
where $|\cdot|$ denotes the Euclidean norm and $m\leq M$.
Introduce 
\begin{align*}
a: W^1_{\Gamma,q}(\D)\times W^1_{\Gamma,q'}(\D)   & \to\R \\
(v,w)& \mapsto \int_{\D} \ma\nabla v\cdot \nabla w.
\end{align*}
Then, define the corresponding operator
\begin{align}\label{Aq}
\begin{split}
\mathcal{A}_q: W^1_{\Gamma,q}(\D) &\to W^{-1}_{\Gamma,q}(\D),  \\
v & \mapsto \mathcal{A}_q v := a(v,\cdot).
\end{split}
\end{align}
Let $\Jop$ be defined by, for $u,v\in W^1_{\Gamma,2}(\D)$,
$$\langle \Jop u,v \rangle := \int_{\D} \nabla u\cdot \nabla v  + uv.$$ 
By H\"older's inequality it follows that  $\Jop: W^1_{\Gamma,p}(\D)\to W^{-1}_{\Gamma,p}(\D)$ is a well-defined and continuous operator for all $p\geq 2$.
We also introduce the constant
$$ M_p :=\sup\{ \| v\|_{W^1_p(\D)}\ | \ v\in  W^1_{\Gamma,p}(\D), \|\Jop v\|_{W^{-1}_{\Gamma,p}(\D)} \leq 1 \}.$$
It is easily verified that $M_2 =1$.
Now we define the set of regular domains in the sense of Gr\"oger
$$ \Xi : = \{ (\D,\Gamma)\ |\  \D\subset\R^2,\Gamma\subset\pd, \text{ and }\D\cup\Gamma\text{ is regular}\}.$$
\begin{definition}
Denote by $R_q$, $2\leq q<\infty$, the set of regular domains $(\D,\Gamma)\in\Xi$ for which $\Jop$ maps $W^1_{\Gamma,q}(\D)$ onto $W^{-1}_{\Gamma,q}(\D)$. 
\end{definition}
Then we have the following results from \cite[Lemma 1]{a_GRRE_1989a}.
\begin{lemma}\label{lemma01}
Let  $(\D,\Gamma)\in R_q$ for some $q>2$. 
Then $(\D,\Gamma)\in R_p$ for $2\leq p\leq q$ and $M_p \leq M_q^\theta$ if $\frac{1}{p} = \frac{1-\theta}{2} + \frac{\theta}{q}$.
\end{lemma}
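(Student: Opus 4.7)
The plan is to derive the estimate by interpolating between $p=2$ and $p=q$. First I would observe that the case $p=2$ is given for free: by Lax--Milgram applied to the inner product $\langle \Jop u,v\rangle$, the operator $\Jop\colon W^1_{\Gamma,2}(\D)\to W^{-1}_{\Gamma,2}(\D)$ is a Hilbert space isomorphism with $\|\Jop^{-1}\|=1$, so $(\D,\Gamma)\in R_2$ and $M_2=1$. At the other endpoint we are given $(\D,\Gamma)\in R_q$, so $\Jop\colon W^1_{\Gamma,q}(\D)\to W^{-1}_{\Gamma,q}(\D)$ is a continuous bijection with bounded inverse of norm $M_q$ (by the open mapping theorem applied to the continuous surjective $\Jop$ between Banach spaces, or directly by the definition of $M_q$).

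Next I would transfer everything to the inverse $T:=\Jop^{-1}$. Since $q>2$, the smooth functions $\C^\infty_\Gamma(\D)$ sit inside both $W^1_{\Gamma,q}$ and $W^1_{\Gamma,2}$, and on this common dense subset the two operators $T_2$ and $T_q$ (obtained at the endpoints) coincide, because $\Jop$ is the same differential expression. Now I would invoke the standard identification of $W^1_{\Gamma,p}(\D)$ as a complex interpolation space between $W^1_{\Gamma,2}(\D)$ and $W^1_{\Gamma,q}(\D)$, with interpolation parameter $\theta$ determined by $\tfrac1p=\tfrac{1-\theta}{2}+\tfrac{\theta}{q}$; by duality the same identity holds for the spaces $W^{-1}_{\Gamma,p}(\D)$. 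Applying the Riesz--Thorin (complex interpolation) theorem to $T$ then yields a bounded operator
\begin{equation*}
T\colon W^{-1}_{\Gamma,p}(\D)\to W^1_{\Gamma,p}(\D),\qquad \|T\|\le \|T_2\|^{1-\theta}\|T_q\|^{\theta}=1^{1-\theta}M_q^{\theta}=M_q^{\theta}.
\end{equation*}

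Combined with the fact that $\Jop\colon W^1_{\Gamma,p}(\D)\to W^{-1}_{\Gamma,p}(\D)$ is always continuous, this shows that $\Jop$ is a bijection at the intermediate exponent $p$, hence $(\D,\Gamma)\in R_p$, and the definition of $M_p$ as the norm of $\Jop^{-1}$ gives $M_p\le M_q^{\theta}$. The only subtle point I foresee is the interpolation identity for the Sobolev subspaces $W^1_{\Gamma,p}(\D)$ incorporating the Dirichlet part $\Gamma_0$; because these spaces are defined as $L^p$-closures of the single test space $\C^\infty_\Gamma(\D)$, this follows from the standard retraction/coretraction argument used to interpolate closed subspaces of $W^1_p(\D)$, so no issue specific to mixed boundary conditions arises. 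Everything else is routine.
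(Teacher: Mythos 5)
Your endpoint observations are fine ($M_2=1$ via the Riesz map, membership in $R_q$ giving the bounded inverse of norm $M_q$), and interpolation is indeed the right skeleton: the paper itself gives no proof but cites \cite[Lemma 1]{a_GRRE_1989a}, whose argument is an interpolation argument. The genuine gap is the step you flag as ``the only subtle point'' and then dismiss. The identities $[W^1_{\Gamma,2}(\D),W^1_{\Gamma,q}(\D)]_\theta=W^1_{\Gamma,p}(\D)$ and their duals are \emph{not} a routine retraction/coretraction exercise: complex interpolation does not commute with passing to closed subspaces in general, and the retraction needed here is an extension operator compatible with the partially vanishing trace on a merely Lipschitz, Gr\"oger-regular geometry. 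Constructing such an operator is the entire content of a substantial later paper (Griepentrog, Gr\"oger, Kaiser and Rehberg, \emph{Interpolation for function spaces related to mixed boundary value problems}, Math.\ Nachr.\ \textbf{241} (2002)); it cannot be invoked as a standard fact, so the central step of your proof is simply asserted. Moreover, even if you grant those identities, they hold only up to norm equivalence with constants depending on $\D$ and $\Gamma$, so your argument yields $M_p\le C\,M_q^\theta$ rather than $M_p\le M_q^\theta$. The constant $1$ is not cosmetic: it is exactly what Theorem \ref{thm01} and Lemma \ref{lemma02} use quantitatively, since the criterion $M_qk<1$ and the admissible range $\frac{1}{q}>\frac{1}{2}-\left(\frac{1}{2}-\frac{1}{q_0}\right)\frac{|\log k|}{\log M_{q_0}}$ follow from $M_q\le M_{q_0}^\theta$ together with $M_2=1$.

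The cited proof obtains both conclusions by never interpolating the Sobolev scale at all: it conjugates into $L^r$, where Riesz--Thorin is an \emph{exact} interpolation theorem. Concretely, let $D:W^1_{\Gamma,r}(\D)\to L^r(\D,\R^3)$, $Du:=(u,\nabla u)$, an isometry onto a closed subspace, and let $D^*$ be the adjoint of $D$ acting on $W^1_{\Gamma,r'}(\D)$. By Hahn--Banach every $f\in W^{-1}_{\Gamma,r}(\D)$ can be written $f=D^*g$ with $\|g\|_{L^r}=\|f\|_{W^{-1}_{\Gamma,r}}$, and one checks that $E:=D\Jop^{-1}D^*$ satisfies $\|E\|_{L(L^r)}=M_r$ whenever $(\D,\Gamma)\in R_r$, the operators for $r=2$ and $r=q$ being consistent. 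Riesz--Thorin then gives $\|E\|_{L(L^p)}\le M_2^{1-\theta}M_q^\theta=M_q^\theta$ with constant exactly one. Transferring back requires one more ingredient your write-up also glosses over (membership of the solution in the closed subspace $W^1_{\Gamma,p}(\D)$, i.e.\ compatibility of the trace condition across exponents): for $v\in W^1_{\Gamma,q}(\D)$ one has $v\in W^1_{\Gamma,p}(\D)$ and $\|v\|_{W^1_p(\D)}\le M_q^\theta\|\Jop v\|_{W^{-1}_{\Gamma,p}(\D)}$, and since $W^{-1}_{\Gamma,q}(\D)$ is dense in $W^{-1}_{\Gamma,p}(\D)$ (reflexivity plus density of $\C^\infty_\Gamma(\D)$ in $W^1_{\Gamma,p'}(\D)$ and $W^1_{\Gamma,q'}(\D)$), a Cauchy-sequence argument gives surjectivity of $\Jop$ at the exponent $p$ and hence $(\D,\Gamma)\in R_p$ with $M_p\le M_q^\theta$. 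I recommend restructuring your proof along these lines rather than through interpolation of the spaces $W^{\pm 1}_{\Gamma,\cdot}(\D)$ themselves.
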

We can now state an adapted version of \cite[Theorem 1]{a_GRRE_1989a} which plays a key role in our investigations.
\begin{theorem}{\cite[Theorem 1]{a_GRRE_1989a}}\label{thm01}
Let  $(\D,\Gamma)\in R_{q_0}$ for some $q_0>2$. 
Suppose that $\ma$ satisfies assumptions \eqref{assump2} for $q_0$ and let $\mathcal{A}_q$ be defined by \eqref{Aq}. 
Then $\mathcal{A}_q: W^1_{\Gamma,q}(\D) \to W^{-1}_{\Gamma,q}(\D)$ is an isomorphism provided that $q\in [2,q_0]$ and $M_q k<1$, where $k:= (1-m^2/M^2)^{1/2}$, and
\begin{equation}\label{Aq_iso}
\| \mathcal{A}_q^{-1}\|_{L(W^{-1}_{\Gamma,q}(\D),W^1_{\Gamma,q}(\D))} \leq c_q, 
\end{equation}
where $c_q: = mM^{-2} M_q (1-M_q k)^{-1}$. Finally, $M_q k <1$ is satisfied if
$$\frac{1}{q}> \frac{1}{2} - \left(\frac{1}{2} - \frac{1}{q_0} \right) \frac{|\log k|}{\log M_{q_0}} . $$
\end{theorem}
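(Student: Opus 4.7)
The plan is to establish the isomorphism via a Banach fixed-point / Neumann series argument in the spirit of Gröger and Recke, using $\Jop^{-1}$ as a preconditioner to turn the equation $\mathcal{A}_q u = f$ into a contractive fixed-point problem in $W^1_{\Gamma,q}(\D)$, with the contraction constant governed precisely by $M_q k$.

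The first step is to define, for given $f\in W^{-1}_{\Gamma,q}(\D)$, the auxiliary map
\begin{equation*}
T_f : W^1_{\Gamma,q}(\D) \to W^1_{\Gamma,q}(\D), \qquad T_f(u) := u - \tfrac{m}{M^2}\, \Jop^{-1}\bigl(\mathcal{A}_q u - f\bigr).
\end{equation*}
This is meaningful because, by hypothesis $(\D,\Gamma)\in R_{q_0}$ and Lemma \ref{lemma01}, $\Jop$ is an isomorphism from $W^1_{\Gamma,q}(\D)$ onto $W^{-1}_{\Gamma,q}(\D)$ for every $q\in[2,q_0]$, with $\|\Jop^{-1}\|=M_q$. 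Clearly $u$ is a fixed point of $T_f$ iff $\mathcal{A}_q u = f$, so it suffices to show $T_f$ is a contraction. The scaling factor $m/M^2$ is chosen to optimize the contraction constant.

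The second step is the key contraction estimate. An elementary pointwise computation using the ellipticity and boundedness bounds in \eqref{assump2} yields, for a.e.\ $x\in\D$ and all $\theta\in\R^2$,
\begin{equation*}
\bigl|\theta - \tfrac{m}{M^2}\ma(x)\theta\bigr|^2 \le |\theta|^2 - 2\tfrac{m^2}{M^2}|\theta|^2 + \tfrac{m^2}{M^4}\bigl|\ma(x)\theta\bigr|^2 \le \bigl(1-\tfrac{m^2}{M^2}\bigr)|\theta|^2 = k^2|\theta|^2,
\end{equation*}
so $\|\mI - \tfrac{m}{M^2}\ma\|_{L^\infty} \le k$. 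Combining this with the definition of $M_q$ and the fact that $T_f(u_1)-T_f(u_2) = \Jop^{-1}\bigl(\Jop - \tfrac{m}{M^2}\mathcal{A}_q\bigr)(u_1-u_2)$, one obtains the contraction bound
\begin{equation*}
\|T_f(u_1)-T_f(u_2)\|_{W^1_q(\D)} \le M_q k\, \|u_1-u_2\|_{W^1_q(\D)}.
\end{equation*}
Under the hypothesis $M_q k < 1$, Banach's fixed-point theorem then delivers a unique solution $u$ of $\mathcal{A}_q u=f$, and the Neumann-series estimate applied with starting point $u_0=0$ gives
\begin{equation*}
\|u\|_{W^1_q(\D)} \le \frac{\|T_f(0)\|_{W^1_q(\D)}}{1-M_q k} \le \frac{m M^{-2} M_q}{1-M_q k}\, \|f\|_{W^{-1}_{\Gamma,q}(\D)},
\end{equation*}
which is the claimed bound \eqref{Aq_iso}. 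Finally, the sufficient condition on $q$ follows by feeding the interpolation estimate $M_q \le M_{q_0}^{\theta}$ from Lemma \ref{lemma01} (with $\tfrac{1}{q} = \tfrac{1-\theta}{2}+\tfrac{\theta}{q_0}$, using $M_2=1$) into $M_q k<1$ and solving the resulting inequality $\theta \log M_{q_0} < |\log k|$ for $1/q$.

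I expect the main obstacle to lie in the contraction estimate: while the pointwise bound $\|\mI - \tfrac{m}{M^2}\ma\|_{L^\infty}\le k$ is elementary, propagating it cleanly to a genuine $W^{-1}_{\Gamma,q}(\D)$-operator-norm inequality for $\Jop - \tfrac{m}{M^2}\mathcal{A}_q$ requires carefully accounting for the lower-order $uv$ contribution in $\Jop$ and the precise definition of the dual norm on $W^{-1}_{\Gamma,q}(\D)$; this is exactly the place where the extremal constant $M_q$ enters, and where the Gröger regularity of $(\D,\Gamma)$ (through the range condition defining $R_q$) is essential.
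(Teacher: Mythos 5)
Your proposal follows exactly the same route as the paper's Appendix~2 proof, and in fact your map $T_f$ \emph{is} the paper's map $Q_f$: since $D^*D=\Jop$ and $D^*(0,\ma\nabla u)=\mathcal{A}_q u$, the paper's $Q_f u=\Jop^{-1}(D^*BDu+sf)$ with $s=m/M^2$ coincides with your $T_f(u)=u-\tfrac{m}{M^2}\Jop^{-1}(\mathcal{A}_q u-f)$; your extraction of \eqref{Aq_iso} from the a priori fixed-point bound with $u_0=0$ is only a cosmetic variant of the paper's comparison of the fixed points of $Q_f$ and $Q_g$, and your treatment of the final condition on $q$ via Lemma \ref{lemma01} is the same.

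However, the step where you assert $\|T_f(u_1)-T_f(u_2)\|_{W^1_q(\D)}\le M_q k\,\|u_1-u_2\|_{W^1_q(\D)}$ is a genuine gap, and it is precisely the obstacle you flag at the end without closing it. One has $\langle(\Jop-s\mathcal{A}_q)w,v\rangle=\int_\D(\mI-s\ma)\nabla w\cdot\nabla v+\int_\D wv$, and the pointwise bound $|(\mI-s\ma)\theta|\le k|\theta|$ damps only the gradient part: the zeroth-order term $\int_\D wv$ coming from $\Jop$ is untouched, so estimating the dual norm gives at best $\|(\Jop-s\mathcal{A}_q)w\|_{W^{-1}_{\Gamma,q}(\D)}\le\max(1,k)\|w\|_{W^1_q(\D)}=\|w\|_{W^1_q(\D)}$, hence a ``contraction'' constant $M_q\ge M_2=1$, and Banach's theorem does not apply --- not even at $q=2$. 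You should be aware that the paper's own proof suffers from the same defect: its displayed estimate for $|By(x)|^2$ uses $\widehat\ma(x)y\cdot y\ge m|y|^2$, which is false for $\widehat\ma y=(0,\ma(y_1,y_2)^\transp)$ whenever $y_0\neq0$, since only $\widehat\ma y\cdot y\ge m|(y_1,y_2)|^2$ holds; this is inherited from Gr\"oger's Theorem~1, whose strong-monotonicity hypothesis concerns the full vector $(u,\nabla u)$ and therefore does not literally cover the operator \eqref{Aq}, which has no zeroth-order part. The standard repair, which makes both your argument and the paper's correct, is to exploit that $\Gamma_0$ has positive measure: by the Poincar\'e inequality, $\|\nabla\cdot\|_{L^q(\D)}$ is an equivalent norm on $W^1_{\Gamma,q}(\D)$, so one may define $\Jop$ by $\langle\Jop u,v\rangle=\int_\D\nabla u\cdot\nabla v$ (and $M_q$, $R_q$ consistently, with $Du=\nabla u$); then the pointwise matrix bound transfers verbatim to $\|(\Jop-s\mathcal{A}_q)w\|_{W^{-1}_{\Gamma,q}(\D)}\le k\|w\|_{W^1_q(\D)}$ and the contraction constant $M_qk$ is genuine. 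Alternatively, one can apply the argument to $\mathcal{A}_q+I$ (which is strongly monotone in the full vector) and remove the identity afterwards by a compactness/bootstrap step, at the price of losing the explicit constant $c_q$.
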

\begin{remark}\label{rem2.5}
\begin{itemize}
\item If $(\D,\Gamma)\in R_q$, then $M_q<\infty$.
\item For every regular $(\D,\Gamma)\in\Xi$, there exists a $q_0>2$ so that $(\D,\Gamma)\in R_{q_0}$; cf \cite[Theorem 3]{MR990595}.
\item For sufficiently small $q>2$, the constant $c_q$ in \eqref{Aq_iso} can be chosen to be independent of $q$; see \cite[Corollary 5]{MR3584578}.
\end{itemize}
\end{remark}
We now explain how a particular case of the theory described above can be applied to our problem.
Let  $\sigma\in L^\infty(\D)$ satisfying pointwise a.e. $\overline{\sigma} \geq \sigma\geq \underline{\sigma}>0$ where $\overline{\sigma},\underline{\sigma}>0$ are constants. It is clear that $\ma:= \sigma \mI\in L^\infty(\D)^{2\times 2}$ satisfies assumptions~\eqref{assump2}. 
In view of Remark \ref{rem2.5}, there exists $q_0>2$ such that $(\D,\Gamma)\in R_{q_0}$.
For $q\in [2,q_0]$, $f\in L^q(\D)$ and $g\in L^\infty(\partial\D)$, the functional
\[
    \langle F,v\rangle := \int_\D fv  + \int_{\Gamma} gv  , \quad  v\in W_{\Gamma,q'}^1(\D)
\]
defines an element in $(W_{\Gamma,q'}^1(\D))^* =  W^{-1}_{\Gamma,q}(\D)$. 
Therefore, it follows from Theorem~\ref{thm01} that there is a unique $u\in W^1_{\Gamma,q}(\D)$ solution  to 
\[
    \int_{\D} \sigma \nabla u\cdot \nabla v   = \int_\D fv  + \int_{\Gamma} gv  \  \text{ for all } v\in W_{\Gamma,q'}^1(\D),
\]
provided $q\in ]2,q_0]$ is sufficiently close to $2$.

\subsection{Shape optimization framework}\label{sec:prel2}
In this section, we recall basic notions about first and second order Eulerian shape derivatives.
For $k\geq 0$  we define 
\begin{align*}
\C^k_c(\D,\R^2) &:=\{\VV\in \C^k(\D,\R^2)\ |\ \VV\text{ has compact support in } \D\},
\end{align*}
and $\C^\infty_c(\D,\R^2)$ similarly, and we equip these spaces with their usual topologies; see \cite[1.56, pp. 19-20]{MR2424078}.
Consider a vector field $\VV\in \C^1_c(\D,\R^2)$ and the associated flow
$\Tt:\D\rightarrow \D$, $t\in [0,\tz]$ defined for each $x_0\in \D$ as $\Tt(x_0):=x(t)$, where $x:[0,\tz]\rightarrow \R^d$ solves 
\begin{align}\label{Vxt}
\begin{split}
\dot{x}(t)&= \VV(x(t))    \quad \text{ for } t\in [0,\tz],\quad  x(0) =x_0.
\end{split}
\end{align}
Let $\OO$ be the set of all open sets  compactly contained in $\D$, where $\D\subset \R^2$ is assumed to be open and bounded.
For $\Om\in \OO$, 
we consider the family of perturbed domains  
\begin{equation}\label{domain}
\Omega_t := \Tt(\Omega). 
\end{equation}
\begin{definition}[Shape derivative]\label{def1}
Let $J : \OO \rightarrow \R$ be a shape functional.
\begin{itemize}
\item[(i)] The Eulerian semiderivative of $J$ at $\Omega$ in direction $\VV \in \C^1_c(\D,\R^2)$
is defined by, when the limit exists,
\begin{equation}
d J(\Omega)(\VV):= \lim_{t \searrow 0}\frac{J(\Omega_t)-J(\Omega)}{t}.
\end{equation}
\item[(ii)] $J$ is said to be \textit{shape differentiable} at $\Omega$ if it has a Eulerian semiderivative at $\Omega$ for all $\VV \in \C^\infty_c(\D,\R^2)$ and the mapping
\begin{align*}
d J(\Omega): \C^\infty_c(\D,\R^2) &  \to \R,\; \VV     \mapsto d J(\Omega)(\VV)
\end{align*}
is linear and continuous, in which case $d J(\Omega)(\VV)$ is called the \textit{Eulerian shape derivative} at $\Omega$, or simply \textit{shape derivative} at $\Omega$.
\end{itemize}
\end{definition}

\section{EIT with point measurements}\label{sec:EIT_point}
\subsection{Problem formulation}
Let $\D\cup\Gamma\in \Xi$ (see Definition~\ref{def1b}) and $\Om\in\OO$.
Denote $\Omc: = \D\setminus \Om$ and $\Omc_t: = T_t(\Omc)$.
In this section, $n$ denotes the outward unit normal vector to $\Om$. 
Introduce the conductivity $\sigma_\Om = \sigma_1\chi_{\Om} + \sigma_0\chi_{\Omc}$ with $(\sigma_0,\sigma_1)$ positive scalars, $\sigma_1 > \sigma_0$, and $f_\Om =f_1\chi_{\Om} + f_0\chi_{\Omc}$ where $f_0,f_1\in H^1(\D)$. 
Here, $\chi_\Omega$ denotes the characteristic function of $\Omega$. 
Let $p>2$ and $1< p'< 2$ satisfying $\frac{1}{p} + \frac{1}{p'} = 1$.
In view of the development in  Section \ref{sec:prel1}, $\sol\in W^1_{\Gamma,p}(\Om)$ is the solution to the mixed boundary value problem
\begin{equation}\label{E:var_form}
\int_\D \sigma_\Om \nabla \sol \cdot \nabla v = \int_\D f_\Om v + \int_{\Gamma} gv  \quad \mbox{ for all }v\in W^{1}_{\Gamma,p'}(\D), 
\end{equation}
with $g\in L^\infty(\pd)$. Observe that $u$ depends on $\Om$ through $\sigma_\Om$.

In EIT,  $g$ represents an input, in this case an electric current applied on the boundary, and $u$ is the corresponding potential.  
Then, measurements $\meas$ of the potential on a subset $\Gamma_h$ of $\overline{\D}$ are performed.
Given the Cauchy data $(g,h)$, the task is to find the best possible approximation of the unknown shape $\Om$.
To obtain a better reconstruction, we apply several input currents $g_i$, $i = 1,\dots,I,$ and the corresponding measurements are denoted by $h_i$.
Assuming $(\sigma_0,\sigma_1)$ are known and denoting $u_i$ the solution of \eqref{E:var_form} with $g=g_i$, 
the EIT problem becomes then: 
\begin{align}\label{EIT-SO}
\begin{split}
\mbox{given }& \mbox{$\{(g_i,h_i)\}_{i=1}^I$, find $\Om$ such that $\sol_i = h_i$ on $\Gamma_h$ for $i=1,\dots,I$.}
\end{split}
\end{align}
However, \eqref{EIT-SO} is idealized since in practice the measurements $h_i$ are corrupted by noise, therefore
we cannot expect that  $u_i = h_i$ be exactly achievable, but rather that $|u_i - h_i|$ should be minimized.
When $\Gamma_h$ is a manifold of one or two dimensions, a common approach is to minimize an appropriate cost functional such as 
\begin{align}
\label{eit3.2} J(\Om) &= \frac{1}{2}\sum_{i=1}^I\int_{\Gamma_h} (\sol _i -  h_i)^2. 
\end{align}
Another popular approach is to use a Kohn-Vogelius type functional; see \cite{MR3535238}.

In this paper we are interested in the case where $\Gamma_h = \{x_k\}_{k=1}^K\in \overline{\D}$ is  a finite set of points, i.e. we  only have a finite collection of point measurements.
In this case, a  Kohn-Vogelius type functional does not seem appropriate since we would need $\meas$ on all of $\partial\D$ for this approach. 
The functional \eqref{eit3.2} on the other hand can be adapted to the case $\Gamma_h = \{x_k\}_{k=1}^K$ in the following way.
For $i=1,\dots,I$, assume that measurements $\{h_i(x_k)\}_{k=1}^K\in\R^K$ are available.
For $\Om\in\OO$ and $x_0\in \overline{\D}$, we consider the shape functional
\begin{equation}\label{E:cost_full}
J(\Om) :=  \frac{1}{2}\sum_{i=1}^I \mu_i\sum_{k=1}^K  \delta_{x_k}((\sol_i - \meas_i)^2) = \frac{1}{2}\sum_{i=1}^I  \mu_i\sum_{k=1}^K  (\sol_i(x_k)- \meas_i(x_k))^2, 
\end{equation}
where $\delta_{x_k}: \C(\overline \D)\to \R$ is the Dirac measure concentrated at $x_k$ and $\mu_i$ are given constants.
Note that in view of the continuous embedding $W^1_{\Gamma,p}(\D)\subset \C(\overline \D)$ for $p>2$ in two dimensions, the point evaluation of $\sol_i$ in \eqref{E:cost_full} is well-defined. 
Without loss of generality, we will compute the shape derivative for the simpler case $I=1$ and $\mu_1 = 1$, in which case the cost functional becomes
\begin{equation}\label{E:cost}
  J(\Om) =   \frac{1}{2}\sum_{k=1}^K  \delta_{x_k}((\sol - \meas)^2) = \frac{1}{2}\sum_{k=1}^K (\sol(x_k)- \meas(x_k))^2.
\end{equation}
The formula of the shape derivative in the general case \eqref{E:cost_full} can then be obtained by summation.
\begin{figure}
\begin{center}
\definecolor{preto}{RGB}{0,0,0}
\definecolor{vermelho}{RGB}{254,49,49}
\definecolor{azul_escuro}{RGB}{0,0,255}
\definecolor{cinza}{RGB}{195,195,195}
\definecolor{papel_amarelo}{RGB}{255, 249, 240}

\begin{tikzpicture}[scale=5]
\fill[papel_amarelo, opacity=0.4] (0,0) rectangle (1,1);
\draw[line width=0.7mm, vermelho](0,1) -- (0.4,1);
\draw[line width=0.7mm, vermelho](0,0) -- (0,1);	
\draw[line width=0.7mm, vermelho] (0,0) -- (0.4,0);
\draw[line width=0.7mm, vermelho](0.6,1) -- (1,1);
\draw[line width=0.7mm, vermelho](1,0) -- (1,1);
\draw[line width=0.7mm, vermelho] (0.6,0) -- (1,0);
\draw[line width=0.9mm, preto](0.6,0.97) -- (0.6,1.03);
\draw[line width=0.9mm, preto](0.4,0.97) -- (0.4,1.03);
\draw[line width=0.9mm, preto](0.6,-0.03) -- (0.6,0.03);
\draw[line width=0.9mm, preto](0.4,-0.03) -- (0.4,0.03);
\draw[line width=0.5mm, preto](0.4,1) -- (0.6,1);
\draw[line width=0.5mm, preto](0.4,0) -- (0.6,0);
\draw[line width=0.5mm, preto] (0.35,0.65) ellipse (0.2cm and 0.1cm) node[anchor= center] {$\Omega,\sigma_1$};
\fill[vermelho, opacity=0.2] (0.35,0.65) ellipse (0.2cm and 0.1cm);
\draw[line width=0.5mm, preto] (0.75,0.35) ellipse (0.15cm and 0.25cm) node[anchor= center] {$\Omega,\sigma_1$};
\fill[vermelho, opacity=0.2] (0.75,0.35) ellipse (0.15cm and 0.25cm);
\draw (0.35, 0.35) node[anchor= center] {$\Omc,\sigma_0$};
\def\raio{0.015}
\def\transparenc{0.7}	
\def\h{0.1};
\draw (0.5, -0.08) node[anchor= center] {$\Gamma_0$};
\draw (0.5, 1.08) node[anchor= center] {$\Gamma_0$};
\draw[vermelho] (1.2, 0.5) node[anchor= center] {$\Gamma\setminus\Gamma_0$};
\draw[vermelho] (-0.2, 0.5) node[anchor= center] {$\Gamma\setminus\Gamma_0$};
\end{tikzpicture}
\end{center}
\caption{Partition $\dd=\Omega\cup \Omega^c$. }\label{partition}
\end{figure}
\subsection{Shape derivative}
For $\Om\in\OO$ and $\VV \in \C^1_c(\D,\R^2)$, define $\Om_t$ as in \eqref{domain}.
Since $\VV$ has compact support in $\D$, we have $\Om_t\subset\D$  for all $t\in [0,\tz]$. 
Now we consider the Lagrangian  $\mathcal{L}:\OO\times W^1_{\Gamma,p}(\D)\times W^1_{\Gamma,p'}(\D)\rightarrow \R$ associated with the cost functional \eqref{E:cost} and the PDE constraint \eqref{E:var_form} defined by
\begin{align}
\label{G}
\begin{split}
\mathcal{L}(\Om,\varphi,\psi) & := \frac{1}{2}\sum_{k=1}^K (\varphi(x_k) -\meas(x_k))^2
+ \int_\D \sigma_{\Om} \nabla \varphi\cdot\nabla \psi - f_\Om\psi - \int_{\Gamma} g\psi.
\end{split}
\end{align}
Aiming at applying the averaged adjoint method \cite{a_ST_2015a}, we introduce the {\it shape-Lagrangian} $G:[0,\tz]\times W^1_{\Gamma,p}(\D)\times W^1_{\Gamma,p'}(\D) \rightarrow \R$ as
\begin{align*}
G(t,\varphi,\psi) &:= \mathcal{L}(\Om_t,\varphi\circ T_t^{-1},\psi\circ T_t^{-1})\\
&= \frac{1}{2}\sum_{k=1}^K (\varphi\circ T_t^{-1} - \meas)^2(x_k) + \int_{\D} \sigma_{\Omega_t}\nabla(\varphi\circ T_t^{-1})\cdot \nabla(\psi\circ T_t^{-1}) -f_{\Om_t} \psi\circ T_t^{-1}   - \int_{\Gamma} g\psi\circ T_t^{-1},
\end{align*}
see Appendix 1 for a detailed explanation. 
Notice that for all $p\ge 1$ we have $\varphi \in W^1_{\Gamma,p}(\D)$ if and only if $\varphi\circ T_t \in W^1_{\Gamma,p}(\D)$; see \cite[Theorem 2.2.2, p. 52]{b_ZI_1989a}.
Observe that 
\begin{align*}
\sigma_{\Om_t}\circ \Tt
& =\sigma_1\chi_{\Om_t}\circ \Tt + \sigma_0\chi_{\Omc_t}\circ \Tt 
= \sigma_1\chi_{\Om} + \sigma_0\chi_{\Omc}= \sigma_\Om,\\
f_{\Om_t}\circ \Tt
& =f_1\circ \Tt\, \chi_{\Om_t}\circ \Tt
+f_0\circ \Tt\, \chi_{\Omc_t}\circ \Tt
=f_1\circ \Tt\, \chi_{\Om}
+f_0\circ \Tt\, \chi_{\Omc},
\end{align*}
and we introduce the function $f^t := f_1\circ \Tt\, \chi_{\Om} +f_0\circ \Tt\, \chi_{\Omc}$.

Using the fact that $\Tt = \text{id}$ on $\partial\D$ and proceeding with the change of variables $x\mapsto T_t(x)$ inside the integrals in $G(t,\varphi,\psi)$, we obtain using the chain rule
\begin{equation}\label{G_reduced}
G(t,\varphi,\psi) = \frac{1}{2}\sum_{k=1}^K (\varphi\circ T_t^{-1} - \meas)^2(x_k)  + \int_{\D} \sigma_\Omega A(t)\nabla\varphi\cdot \nabla\psi - f^t\psi  - \int_{\Gamma} g\psi,
\end{equation}
where $A(t):= \Det(D T_t) D T_t^{-1}D T_t^{-\transp}$. 

For $t\in [0,\tz]$, let us define the perturbation $\mathcal{A}^t_q$ of $\mathcal{A}_q$ defined in \eqref{Aq} as follows:
\begin{align}\label{Aqt}
\begin{split}
\mathcal{A}^t_q: W^1_{\Gamma,q}(\D) &\to W^{-1}_{\Gamma,q}(\D),  \\
v & \mapsto \left(w\mapsto \langle\mathcal{A}^t_q v,w\rangle: =  \int_{\D} \sigma_\Omega A(t)\nabla v\cdot \nabla w \right).
\end{split}
\end{align}
By continuity of $t\mapsto A(t):[0,t_0] \to C(\overline{\D})^{2\times 2}$, for every $\epsilon>0$ there exists $\delta>0$ so that the following result (see \cite[Lemma 13]{MR3584578}) follows immediately:
\begin{align}
\label{A01} A(t)(x)\eta\cdot\eta & \geq (1-\epsilon) |\eta|^2\quad   \text{ for all }\eta\in\R^2 \text{ and all } (t,x)\in [0,\delta]\times\overline{\D},\\
\label{A02}  |A(t)(x)| & \leq 1+\epsilon\quad    \text{ for all } (t,x)\in [0,\delta]\times\overline{\D}.
\end{align}
The continuity properties \eqref{A01}-\eqref{A02} imply the following 
perturbed version of Theorem~\ref{thm01}.
\begin{lemma}\label{lemma02}
For each  $(\D,\Gamma)\in \Xi$ there exists $q_0>2$, $\epsilon>0$ and $\delta>0$ so that for all $t\in [0,\delta]$ and all $q\in [2,q_0]$ satisfying $M_q k < 1$,  where $k:= (1-m^2/M^2)^{1/2}<1$ with $m = \underline{\sigma}(1-\epsilon)$ and $M = \overline{\sigma}(1+\epsilon)$, the mapping  $\mathcal{A}^t_q: W^1_{\Gamma,q}(\D) \to W^{-1}_{\Gamma,q}(\D)$ defined by \eqref{Aqt} is an isomorphism.
Moreover, we have for all $t\in [0,\delta]$ that
\begin{equation}\label{Aq_iso_t}
\|  (\mathcal{A}^t_q)^{-1} \|_{L(W^{-1}_{\Gamma,q}(\D),W^1_{\Gamma,q}(\D))} \leq c_q ,
\end{equation}
where $c_q: = mM^{-2} M_q (1-M_q k)^{-1}$ is independent of $t$. 
Finally, $M_q k <1$ is satisfied if
\[
    \frac{1}{q}> \frac{1}{2} - \left(\frac{1}{2} - \frac{1}{q_0} \right) \frac{|\log k|}{\log M_{q_0}} . 
\]
\end{lemma}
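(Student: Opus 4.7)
The plan is to reduce the perturbed statement to Theorem~\ref{thm01} by verifying that the coefficient matrix $\Vt(t,x):= \sigma_\Omega(x)A(t)(x)$ satisfies the ellipticity and boundedness assumptions \eqref{assump2} uniformly in $t$ on a sufficiently small interval $[0,\delta]$. Since $(\D,\Gamma)\in \Xi$, Remark~\ref{rem2.5} furnishes some $q_0>2$ with $(\D,\Gamma)\in R_{q_0}$; this $q_0$ will be the one appearing in the statement and it does not depend on $t$.

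The first step is to fix the ellipticity data. Given $\epsilon>0$, choose $\delta>0$ so that \eqref{A01}--\eqref{A02} hold on $[0,\delta]\times\overline{\D}$. Then, for every $\eta\in\R^2$, $x\in\overline{\D}$ and $t\in[0,\delta]$, the chain of inequalities
\[
\Vt(t,x)\eta\cdot\eta \;=\;\sigma_\Omega(x)\,A(t)(x)\eta\cdot\eta\;\ge\;\underline{\sigma}(1-\epsilon)|\eta|^2 \;=\; m|\eta|^2
\]
and
\[
|\Vt(t,x)\eta|\;\le\;\overline{\sigma}\,|A(t)(x)\eta|\;\le\;\overline{\sigma}(1+\epsilon)|\eta|\;=\;M|\eta|
\]
yields \eqref{assump2} with constants $m,M$ independent of $t$. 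Observe that $\epsilon$ can be shrunk (and $\delta$ correspondingly adjusted via \eqref{A01}--\eqref{A02}) so that $m>0$ and $m\le M$, which guarantees $k=(1-m^2/M^2)^{1/2}<1$.

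With these uniform-in-$t$ data in hand, Theorem~\ref{thm01} applied to $\Vt(t,\cdot)$ for each fixed $t\in[0,\delta]$ gives, under the hypothesis $M_q k<1$ and $q\in[2,q_0]$, that $\mathcal{A}^t_q$ is an isomorphism from $W^1_{\Gamma,q}(\D)$ onto $W^{-1}_{\Gamma,q}(\D)$ together with the norm estimate
\[
\|(\mathcal{A}^t_q)^{-1}\|_{L(W^{-1}_{\Gamma,q}(\D),W^1_{\Gamma,q}(\D))}\le c_q, \qquad c_q = mM^{-2}M_q(1-M_q k)^{-1}.
\]
Because $m$, $M$, $k$ come only from $\underline{\sigma},\overline{\sigma}$ and $\epsilon$, and $M_q$ is a property of $(\D,\Gamma)$, the constant $c_q$ is manifestly independent of $t$. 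The sufficient condition on $q$ for $M_qk<1$ is just the logarithmic interpolation inequality from Theorem~\ref{thm01}, inherited unchanged.

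The only point deserving some care is the choice of $\epsilon$ and $\delta$: one must fix $\epsilon$ before applying Theorem~\ref{thm01}, because the admissible range of $q$ (i.e., the condition $M_q k<1$) depends on $k$, which in turn depends on $\epsilon$. However since $M_2=1<1/k$, continuity of $q\mapsto M_q$ at $q=2$ (a consequence of Lemma~\ref{lemma01}) ensures that such $q>2$ always exist, so no obstruction arises. The proof is then essentially complete by invoking Theorem~\ref{thm01} with the uniform bounds established above.
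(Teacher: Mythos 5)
Your proof is correct and takes essentially the same route as the paper's: one checks via \eqref{A01}--\eqref{A02} that $\sigma_\Omega A(t)$ satisfies \eqref{assump2} with the $t$-independent constants $m=\underline{\sigma}(1-\epsilon)$, $M=\overline{\sigma}(1+\epsilon)$ (the paper simply takes $\epsilon<1$ where you shrink $\epsilon$), and then applies Theorem~\ref{thm01} for each fixed $t$. Your closing remark on fixing $\epsilon$ before the admissible range of $q$ is determined, with existence of such $q>2$ guaranteed by Lemma~\ref{lemma01} and $M_2=1$, is a quantifier-ordering detail the paper leaves implicit.
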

\begin{proof}
We have $\overline{\sigma} \geq \sigma\geq \underline{\sigma}>0$ with $ \underline{\sigma}: = \min\{\sigma_0,\sigma_1\}$, $\overline{\sigma}: = \max\{\sigma_0,\sigma_1\}$ and $\underline{\sigma}< \overline{\sigma}$. 
Let us choose $\epsilon<1 $ and $\delta$ such that \eqref{A01},\eqref{A02} is satisfied, and let $t\in [0,\delta)$. 
In view of \eqref{A01},\eqref{A02} it immediately follows that $\ma = \sigma_\Omega A(t)\in L^\infty(\D)^{2\times 2}$ satisfies assumptions~\eqref{assump2} with $m := \underline{\sigma}(1-\epsilon)$ and $M:= \overline\sigma(1+\epsilon)$. 
Hence, the result follows directly from Theorem \ref{thm01}, since $M$ and $m$ are independent of $t$.
\end{proof}
The main statement of this section is the following theorem. 
\begin{theorem}[distributed shape derivative]\label{T:shape}
Let $\D\cup\Gamma\subset\R^2$ be a regular domain in the sense of Gr\"oger and $\Om\in\OO$. 
Assume that $\Gamma_h\cap\po =\emptyset$ and $f_\Om =f_1\chi_{\Om}+f_0\chi_{\Omc}$ where $f_0,f_1\in H^1(\D)$.
Then the shape derivative of $J$ at $\Omega$ in direction $\VV\in\C^1_c(\D,\R^2)$
is given by 
\begin{equation}
\label{T:tensor_shape_deriv}
dJ(\Om)(\VV) 
= \VS_0(\VV) + \int_{\D} \VS_1: D \VV,
\end{equation}
where $\VS_1\in L^1(\D)^{2\times 2}$ and $\VS_0\in (\C^0(\overline{\D},\R^2))^*$ are defined by 
\begin{align}
\label{S1_1}\Sb_1 & = - 2 \sigma_\Om \nabla u\odot\nabla p + (\sigma_\Om\nabla u\cdot\nabla p -fp)\mI,\\
\Sb_0(\VV) & =  \Sb_0^s(\VV) +  \int_\D \Sb_0^r \cdot \VV\\ 
\Sb_0^r & = - p\widetilde{\nabla} f  \\
\Sb_0^s & = - \sum_{k=1}^K \bigg((u - \meas)\nabla u\bigg)(x_k)\delta_{x_k},  
\end{align}
where $\nabla u\odot\nabla p := (\nabla u\otimes\nabla p + \nabla p\otimes\nabla u)/2$,
$\widetilde{\nabla} f := \nabla f_1 \, \chi_{\Om} + \nabla f_0\, \chi_{\Omc}$ and $\Sb_0^r\in L^1(\D,\R^2)$.

Also, there exists $q>2$ such that the adjoint $p\in W^1_{\Gamma,q'}(\D) $ is the solution to
\begin{equation}
  \int_\D \sigma_\Omega \nabla p \cdot \nabla \varphi = -\sum_{k=1}^K (\sol(x_k) -h(x_k))\varphi(x_k)  \quad \mbox{ for all } \varphi\in W^1_{\Gamma,q}(\D).
\end{equation}
\end{theorem}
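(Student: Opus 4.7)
The plan is to apply the averaged adjoint method (sketched in the appendix) to the shape-Lagrangian $G$ in \eqref{G_reduced}, but adapted to the asymmetric Banach pairing $W^{1}_{\Gamma,q}(\D) \times W^{1}_{\Gamma,q'}(\D)$ with $q>2$ and $q'<2$. The choice $q>2$ is forced by the point evaluations in the cost: via the Sobolev embedding in two dimensions $W^{1}_{\Gamma,q}(\D) \hookrightarrow C(\overline{\D})$, each $\delta_{x_k}$ defines a continuous functional on $W^{1}_{\Gamma,q}(\D)$ and thus belongs to $W^{-1}_{\Gamma,q'}(\D)$. Consequently the adjoint state is only expected in $W^{1}_{\Gamma,q'}(\D)$, which is slightly below $H^1$.

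First I would verify the hypotheses of the averaged adjoint method. Lemma~\ref{lemma02} produces $q_0>2$ and $\delta>0$ so that for every $q\in[2,q_0]$ close enough to $2$ (so that $M_q k<1$) and every $t\in[0,\delta]$, the operator $\mathcal{A}^t_q$ is an isomorphism with the $t$-uniform bound \eqref{Aq_iso_t}. Since $f^t\in L^q(\D)$ and $g\in L^\infty(\Gamma)$ depend continuously on $t$, this yields a well-defined state $u^t\in W^{1}_{\Gamma,q}(\D)$; by duality the transposed operator is also an isomorphism, defining $p^t\in W^{1}_{\Gamma,q'}(\D)$ with the Dirac right-hand side $-\sum_k(u^t(x_k)-h(x_k))\delta_{x_k}$. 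Because $G$ is bilinear in $(\varphi,\psi)$ apart from a quadratic dependence on $\varphi$ through the point evaluations, the averaged adjoint equation at $t=0$ collapses to the single linear equation displayed in the statement.

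Next I would compute $dJ(\Om)(\VV)=\partial_t G(t,u,p)|_{t=0}$ by differentiating the three $t$-dependent pieces of \eqref{G_reduced} with $(\varphi,\psi)=(u,p)$ held fixed. (a) For the point-evaluation part, since $\partial_t T_t^{-1}|_{t=0}=-\VV$, the chain rule gives $\tfrac{d}{dt}u(T_t^{-1}(x_k))|_{t=0}=-\nabla u(x_k)\cdot \VV(x_k)$, which yields the singular contribution $\Sb_0^s=-\sum_k(u-h)(x_k)\nabla u(x_k)\delta_{x_k}$. (b) For the diffusion term, $A'(0)=\operatorname{div}(\VV)\mI-D\VV-D\VV^\transp$ combined with the identity $(D\VV+D\VV^\transp)\nabla u\cdot\nabla p=2(\nabla u\odot\nabla p):D\VV$ produces the tensor $\sigma_\Om(\nabla u\cdot\nabla p)\mI-2\sigma_\Om\nabla u\odot\nabla p$ inside $\Sb_1$. (c) For the source term, when the change of variables is carried out the Jacobian $\det(DT_t)$ must be retained, and the product rule then splits the contribution into a regular piece $-p\widetilde{\nabla}f\cdot\VV$ (defining $\Sb_0^r$) and a divergence piece $-fp\operatorname{div}(\VV)=-fp\,\mI:D\VV$ (completing $\Sb_1$).

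The principal obstacle is the strong continuity $p^t\to p$ in $W^{1}_{\Gamma,q'}(\D)$ as $t\to 0^+$, required to legitimately differentiate $t\mapsto G(t,u,p^t)$ at $0$ in the Banach-space averaged adjoint framework of \cite{MR3584578}. This follows from the $t$-uniform isomorphism bound \eqref{Aq_iso_t}, the continuity of $t\mapsto A(t)$ in $C(\overline{\D})^{2\times 2}$, and the Lipschitz dependence of $f^t\det(DT_t)$ on $t$ in $L^q(\D)$, for which the assumption $f_0,f_1\in H^1(\D)$ is used. Conceptually, the Dirac contribution $\Sb_0^s$ arises precisely because the observation points $x_k$ remain fixed on the physical domain while their preimages $T_t^{-1}(x_k)$ move on the reference configuration; the hypothesis $\Gamma_h\cap\po=\emptyset$ ensures that each $x_k$ lies in the interior of $\D$, so that $\VV(x_k)$ and the pointwise gradient $\nabla u(x_k)$ are well-defined and no boundary trace issue obstructs the calculation.
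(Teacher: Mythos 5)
Your proposal follows the same route as the paper's own proof: the averaged adjoint method over the asymmetric pair $W^1_{\Gamma,q}(\D)\times W^1_{\Gamma,q'}(\D)$, Lemma~\ref{lemma02} for the $t$-uniform invertibility of $\mathcal{A}^t_q$ and its transpose, and the same three-way split of $\partial_t G(0,u^0,p^0)$; your computations (a)--(c) reproduce \eqref{E:derivative_proof} correctly (including the observation that the Jacobian $\Det(DT_t)$ must accompany $f^t$, which is indeed where the $-fp\,\mI$ contribution to $\Sb_1$ comes from). There are, however, two genuine gaps.

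First, you misread the hypothesis $\Gamma_h\cap\po=\emptyset$: in the paper's notation $\po$ is $\partial\Om$, the unknown interface, not $\partial\D$. The hypothesis does not place the $x_k$ in the interior of $\D$ (in the numerical section they lie on $\partial\D$); it keeps them off $\partial\Om$. This is precisely what your justification of step (a) is missing: interior position in $\D$ only gives $u\in W^1_{\Gamma,q}(\D)\hookrightarrow\C^0(\overline{\D})$, i.e. continuity of $u$, while $\nabla u$ jumps across $\partial\Om$; so if $x_k\in\partial\Om$ the quantity $\nabla u(x_k)$ appearing in $\Sb_0^s$ is meaningless and the chain rule applied to $t\mapsto u(T_t^{-1}(x_k))$ fails. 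The argument the theorem actually requires is the case distinction of the paper: for $x_k\notin\partial\Om$, either $x_k\in\Om$ or $x_k\in\D\setminus\overline{\Om}$, where $\sigma_\Om$ is constant, so local elliptic regularity makes $u$ smooth near $x_k$ and the chain rule is legitimate; or else $x_k\in\partial\D$, in which case $T_t(x_k)=x_k$ because $\VV$ has compact support in $\D$, and the point-evaluation difference quotient vanishes identically, consistently with $\VV(x_k)=0$ in the final formula.

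Second, the $p^t$ you construct for $t>0$ is not the averaged adjoint: you prescribe the right-hand side $-\sum_k(u^t(x_k)-h(x_k))\delta_{x_k}$, i.e. the adjoint linearized at $u^t$ with Dirac masses at the fixed points $x_k$, whereas the averaged adjoint equation \eqref{E:averaged1}--\eqref{E:averaged2} carries the $s$-average $\tfrac12(u^t+u^0)\circ T_t^{-1}$ and Dirac masses at the displaced points $T_t^{-1}(x_k)$. This is not cosmetic: the identity $G(t,u^t,p^t)=G(t,u^0,p^t)$ of \eqref{eq:main_averaged}, which is the entire mechanism allowing $dJ$ to be computed without differentiating $t\mapsto u^t$, holds exactly because $p^t$ solves the averaged equation; with your $p^t$ it fails (the discrepancy is quadratic in $u^t-u^0$), and closing the argument would then require Lipschitz or differentiability estimates on the state, a different and heavier proof. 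A minor further point: strong convergence $p^t\to p^0$ in $W^1_{\Gamma,q'}(\D)$ is not ``required''; the paper only proves the uniform bound \eqref{007} and uses the weak convergence $p^t\rightharpoonup p^0$, which suffices because $\nabla p^t$ and $p^t$ are paired against the strongly convergent factors $\sigma_\Om\frac{A(t)-\mI}{t}\nabla u^0$ and $\frac{f^t-f^0}{t}$.
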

\begin{proof}
We employ the averaged adjoint approach of \cite{a_ST_2015a}, we refer to Appendix 1 for details about the method. 
We closely follow  the argumentation of \cite{MR3584578}. 
Let us define the perturbed state $u^t\in W^1_{\Gamma,q}(\D)$ solution of 
\ben\label{eq:perturbed_state}
\int_\D \sigma_\Om A(t) \nabla u^t \cdot \nabla \varphi  
= \int_\D f^t \varphi  
+ \int_{\Gamma} g\varphi \quad \text{ for all } \varphi \in W^1_{\Gamma,q'}(\D).
\een 
The mapping $F_t : W^1_{\Gamma,q'}(\D)\to \bbR$ defined by
\[
\langle F_t,v \rangle := \int_\D f^t v  + \int_{\Gamma} gv  \quad \text{ for } v\in W^1_{\Gamma,q'}(\D) 
\]
is well-defined and continuous. 
Consequently, thanks to Theorem~\ref{thm01} there is a unique solution to \eqref{eq:perturbed_state} in $W^1_{\Gamma,q}(\D)$ for $q>2$ sufficiently close to $2$.  
Using \eqref{Aq_iso_t} we get
\begin{align*}
 \|  u^t\|_{W^1_{\Gamma,q}(\D)} \leq c_q \| F_t \|_{W^{-1}_{\Gamma,q}(\D)}
 \leq   C(\| f^t \|_{L^2(\D)} + \| g \|_{L^\infty(\partial\D)}).
\end{align*}
It follows that for some constant $C$ independent of $t$, we have
\begin{align}\label{006}
 \|  u^t\|_{W^1_{\Gamma,q}(\D)} \leq C.
\end{align}

Following \eqref{averated_}, the {\it averaged adjoint equation} reads: find $p^t\in W^1_{\Gamma,q'}(\D)$, such that
\ben\label{E:averaged1}
\int_0^1 d_\varphi G(t,su^t + (1-s)u^0; p^t)(\varphi)\;ds =0 \quad \text{ for all } \varphi\in W^1_{\Gamma,q}(\D),
\een
which is equivalent to, using the fact that $A(t)^\transp = A(t)$, 
\begin{align}\label{E:averaged2}
\begin{split}
& \int_\D \sigma_\Omega A(t)\nabla p^t \cdot  \nabla \varphi  \\
& \hspace{1cm} = \frac{1}{2}\sum_{k=1}^K (u^t\circ T_t^{-1}(x_k) + u^0\circ T_t^{-1}(x_k) - 2\meas(x_k))\varphi\circ T_t^{-1}(x_k) \quad \text{ for all } \varphi\in W^1_{\Gamma,q}(\D).
\end{split}
\end{align}
Let us introduce the adjoint operator
\begin{align}\label{Aqt_adjoint}
\begin{split}
(\mathcal{A}^t_q)^*: W^{-1}_{\Gamma,q}(\D)^* = W^1_{\Gamma,q'}(\D)  &\to W^1_{\Gamma,q}(\D)^* = W^{-1}_{\Gamma,q'}(\D),  \\
w & \mapsto \left(v\mapsto \langle(\mathcal{A}^t_q)^* w,v\rangle: =  \langle w, \mathcal{A}^t_q v\rangle \right).
\end{split}
\end{align}
Using \eqref{Aqt} and  the fact that $A(t)^\transp = A(t)$ we get for $w\in  W^1_{\Gamma,q'}(\D)$ and $v\in W^1_{\Gamma,q}(\D)$,
\[
\langle(\mathcal{A}^t_q)^* w,v\rangle = \int_{\D} \sigma_\Omega A(t)\nabla w\cdot \nabla v.
\]
Now, in view of Lemma \ref{lemma02} there exists $q>2$ and $\delta>0$ such that the mapping  $\mathcal{A}^t_q: W^1_{\Gamma,q}(\D) \to W^{-1}_{\Gamma,q}(\D)$  is an isomorphism for all $t\in [0,\delta]$.
Thus, the adjoint mapping $(\mathcal{A}^t_q)^*: W^1_{\Gamma,q'}(\D) \to W^{-1}_{\Gamma,q'}(\D)$  is also an isomorphism.

Now the functional $R_t: W^1_{\Gamma,q}(\D)\to \bbR$ defined by
\[
\langle R_t ,v \rangle := \frac{1}{2}\sum_{k=1}^K (u^t\circ T_t^{-1}(x_k) + u^0\circ T_t^{-1}(x_k) - 2\meas(x_k))v\circ T_t^{-1}(x_k)  \text{ for } v\in W^1_{\Gamma,q}(\D). 
\]
is well-defined and continuous. 
Therefore, since $(\mathcal{A}^t_q)^*$ is an isomorphism,  the averaged adjoint equation $(\mathcal{A}^t_q)^* p^t = R_t$ has a unique solution $p^t\in W_{\Gamma,q'}^1(\D)$.

Using the continuous embedding of $W^1_{\Gamma,q}(\D)$ into the space of continuous functions $\C(\overline{\D})$ for $q>2$ in two dimensions, it also follows that 
\begin{align*}
\|p^t\|_{W^1_{\Gamma,q'}(\D)}
& \le  C \max_{k\in\{1,\dots,K\}} |(u^t\circ T_t^{-1} + u^0\circ T_t^{-1}-2\meas )(x_k)|\\
& \le C \left(\|u^t\|_{W^1_{\Gamma,q}(\D)} + \|u^0\|_{W^1_{\Gamma,q}(\D)} +\max_{k\in\{1,\dots,K\}}|\meas(x_k)| \right) . 
\end{align*}
Then using \eqref{006} we get, for some constant $C$ independent of $t$,
\begin{align}\label{007}
\|p^t\|_{W^1_{\Gamma,q'}(\D)}\leq C.
\end{align}
With the estimate \eqref{007} we readily verify that $p^t\rightharpoonup p^0$ weakly in $W^1_{\Gamma,q'}(\D)$ as $t\searrow 0$. 
Using \eqref{eq:main_averaged} we have
\ben
  \frac{G(t,u^t,p^t)-G(0,u^0,p^0)}{t} 
  = \frac{G(t,u^0,p^t)-G(0,u^0,p^t)}{t} 
  \een
and then in view of \eqref{G_reduced}
\begin{equation}\label{E:rhs_diff_G}
\begin{split}
  \frac{G(t,u^0,p^t)-G(0,u^0,p^t)}{t} & = \frac{1}{2} \sum_{k=1}^K \frac{(u^0\circ T_t^{-1} - \meas)^2(x_k) - (u^0 - \meas)^2(x_k) }{t}\\
  &\qquad  + \int_\D \sigma_\Omega \frac{A(t)-\mI}{t} \nabla u^0\cdot \nabla p^t - \frac{f^t - f^0}{t} p^t.
  \end{split}
\end{equation}
Using the assumption $\Gamma_h\cap\po =\emptyset$, we have for all $k=1,\dots,K$  that  $x_k$ belongs either to $\Om$, to $\D\setminus\overline{\Om}$ or to $\partial\D$.
Assume first that $x_k$ belongs either to $\Om$ or to $\D\setminus\overline{\Om}$.
Since $\sigma_\Om$ is constant in $\Om$ and in $\D\setminus\overline{\Om}$, $u$ is harmonic in these sets, therefore using elliptic regularity results we have $u\in\C^\infty(B(x_k,r_k))$ for sufficiently small $r_k$, where $B(x_k,r_k)$ denotes the open ball of center $x_k$ and radius $r_k$.
Thus, the first term on the right hand side of \eqref{E:rhs_diff_G} converges as $t\searrow 0$.
Now if $x_k\in\partial\D$, then $T_t(x_k)=x_k$ due to $\VV\in\C^1_c(\D,\R^2)$, and the first term on the right hand side of \eqref{E:rhs_diff_G} is equal to zero, so we obtain the same formula as in the case  $x_k\in\D\setminus\overline{\Om}$.
Also, using  $\VV\in\C^1_c(\D,\R^2)$ we have the following convergence properties (see \cite[Lem. 3.1]{a_KAKUST_2018a} and \cite[Lemma 2.16]{phdKevin}) 
\begin{align}
\frac{A(t)-I}{t} \rightarrow A'(0) & := \Div(\VV) - D \VV - D \VV^\transp \quad \text{ strongly in } \C(\overline \D)^{2\times 2}, \\
\frac{f^t - f^0}{t} \rightarrow \widetilde  f'(0) & := f_\Om\Div(\VV) + \widetilde\nabla  f\cdot \VV \quad \text{ strongly in } L^2(\D),
\end{align}
and we conclude that the right hand side of \eqref{E:rhs_diff_G} converges to 
\ben\label{E:derivative_proof}
- \sum_{k=1}^K (u^0 - \meas )(x_k) \nabla u^0 (x_k) \cdot \VV(x_k) 
+ \int_\D \sigma_\Omega A'(0)\nabla u^0\cdot \nabla p^0 - \widetilde f'(0) p^0 . 
\een
In view of \eqref{G_reduced} this shows 
\ben
\lim_{t\searrow 0}\frac{G(t,u^t,p^t)-G(0,u^0,p^0)}{t} = \partial_t G(0,u^0,p^0),
\een
which shows that Assumption \ref{H1} is satisfied.
Using tensor calculus, it is then readily verified  that \eqref{E:derivative_proof} can be brought into  expression \eqref{T:tensor_shape_deriv}.
The regularity $\VS_1\in L^1(\D)^{2\times 2}$ is due to $u\in W^1_{\Gamma,q}(\D)$, and $p\in W^1_{\Gamma,q'}(\D)$ and the regularity of $\VS_0^r$ is a consequence of the regularity of $p$ and $f_\Om$. 
\end{proof}
An interesting feature of Theorem 3.2 is to show that the distributed shape derivative exists when $\Om$ is only open.
Another relevant issue is to determine the minimal regularity of $\Om$ for which we can obtain the boundary expression of the shape derivative.
The rest of this section is devoted to the study of this question.
We start with the following well-known result which describes the structure of the boundary expression of the shape derivative; see \cite[pp. 480-481]{MR2731611}. 
\begin{theorem}[Zol\'esio's structure theorem]\label{thm:structure_theorem}
Let $\Om$ be open with $\partial \Omega $ compact and of class $\C^{k+1}$, $k\geq 0$.
Assume $J$ has a Eulerian shape derivative at $\Om$ and $d J(\Omega )$ is continuous for the $\C^k(\D,\R^d)$-topology.
Then, there exists a linear and continuous functional $l: \C^k(\partial \Omega )\rightarrow \R$ such that
\begin{equation}
\label{volume}
d J(\Omega)(\VV)= l(\VV_{|\partial \Omega }\cdot n)\  \text{ for all } \VV\in  \C^k_c(\D,\R^d).
\end{equation}
\end{theorem}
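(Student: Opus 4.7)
The strategy is to show that $dJ(\Om)(\VV)$ depends only on the scalar trace $\VV|_{\partial\Om}\cdot n$, then to construct $l$ by composing $dJ(\Om)$ with a bounded normal-extension operator. The heart of the matter is the following vanishing property: if $\VV\in\C^\infty_c(\D,\R^d)$ satisfies $\VV\cdot n = 0$ on $\partial\Om$, then $dJ(\Om)(\VV)=0$. Since $\partial\Om$ is $\C^{k+1}$ with $k\geq 0$, the outward unit normal $n$ is at least continuous, and the condition $\VV\cdot n = 0$ means $\VV|_{\partial\Om}$ is a smooth tangent vector field on the manifold $\partial\Om$. By the ODE existence theorem on $\partial\Om$ applied to $\VV|_{\partial\Om}$, the resulting flow remains on $\partial\Om$; by uniqueness of solutions of $\dot x=\VV(x)$ in $\D$, it must coincide with $T_t$ restricted to $\partial\Om$. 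Hence $T_t(\partial\Om)=\partial\Om$ for $t\in[0,\tz]$. Since $T_t$ is an orientation-preserving diffeomorphism of $\D$ equal to the identity at $t=0$, points of $\Om$ cannot cross $\partial\Om$ under $T_t$, so $T_t(\Om)=\Om$. Therefore $J(\Om_t)\equiv J(\Om)$ and $dJ(\Om)(\VV)=0$.

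The second step is to propagate this vanishing from $\C^\infty_c$ to arbitrary $\VV\in\C^k_c(\D,\R^d)$ with $\VV\cdot n=0$ on $\partial\Om$. The plan is to approximate $\VV$ in the $\C^k_c(\D,\R^d)$-topology by $\VV_j\in\C^\infty_c(\D,\R^d)$ still satisfying $\VV_j\cdot n=0$ on $\partial\Om$. To do this, pick a tubular neighborhood $\mathcal U$ of $\partial\Om$ and extend $n$ to a $\C^k$ unit vector field $n_{\mathrm{ext}}$ on $\mathcal U$ (possible because $\partial\Om$ is $\C^{k+1}$). On $\mathcal U$ decompose $\VV = (\VV\cdot n_{\mathrm{ext}})n_{\mathrm{ext}} + \VV_T$; mollify the scalar normal component and the tangential component separately in local normal/tangential coordinates, using an odd reflection across $\partial\Om$ for the normal component so that it keeps vanishing on $\partial\Om$; then glue to the smoothing of $\VV$ on $\D\setminus\mathcal U$ via a partition of unity. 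The hypothesis that $dJ(\Om)$ is continuous on $\C^k_c(\D,\R^d)$ yields $dJ(\Om)(\VV)=\lim_j dJ(\Om)(\VV_j)=0$.

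The third step is to define $l$. Using the same tubular neighborhood and $n_{\mathrm{ext}}$, construct a bounded linear extension operator $E:\C^k(\partial\Om)\to\C^k_c(\D,\R^d)$ with $E(\phi)\cdot n=\phi$ on $\partial\Om$ by extending $\phi$ to a $\C^k$ function $\tilde\phi$ on $\mathcal U$ (for instance, constant along normal lines), multiplying by $n_{\mathrm{ext}}$, and truncating with a cutoff $\eta\in\C^\infty_c(\D)$ equal to $1$ near $\partial\Om$; the $\C^{k+1}$-regularity of $\partial\Om$ again guarantees $E(\phi)\in\C^k_c(\D,\R^d)$ with a bound $\|E(\phi)\|_{\C^k_c(\D,\R^d)}\leq C\|\phi\|_{\C^k(\partial\Om)}$. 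Set
\[
l(\phi) := dJ(\Om)(E(\phi)).
\]
Well-definedness on the equivalence classes modulo tangent vector fields is exactly the content of the second step: any two extensions of $\phi$ differ by an element of $\C^k_c(\D,\R^d)$ with zero normal trace. Linearity is inherited from $dJ(\Om)$ and $E$, and continuity follows because both $dJ(\Om)$ and $E$ are continuous. For arbitrary $\VV\in\C^k_c(\D,\R^d)$, the vector field $\VV-E(\VV|_{\partial\Om}\cdot n)$ has zero normal trace on $\partial\Om$, so step two yields $dJ(\Om)(\VV)=l(\VV|_{\partial\Om}\cdot n)$, which is the claim.

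The main obstacle is the second step, namely the smooth approximation of a $\C^k$ tangent vector field in the $\C^k$-topology while preserving the pointwise constraint $\VV\cdot n=0$ on $\partial\Om$. This is precisely why the hypothesis $\partial\Om\in\C^{k+1}$ (rather than merely $\C^k$) is indispensable: one needs a $\C^k$ extension of $n$ to a neighborhood in order for the normal/tangential decomposition, and hence the constraint-preserving mollification, to stay in $\C^k$.
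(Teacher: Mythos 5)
The paper does not actually prove this theorem; it quotes it from the monograph of Delfour and Zol\'esio \cite{MR2731611}, so your attempt has to be measured against that standard argument. Your Steps 1 and 3 are the classical skeleton and are essentially fine: the flow of a smooth field tangent to $\partial\Om$ preserves $\Om$, so such fields lie in the kernel of $dJ(\Om)$, and a bounded linear extension operator $E:\C^k(\partial\Om)\to\C^k_c(\D,\R^d)$ with $E(\phi)\cdot n=\phi$ converts the kernel property into the factorization \eqref{volume} (with the caveat that for a $\C^1$ boundary the extension must be built in boundary-flattening charts, since the nearest-point projection you invoke need not exist below $\C^{1,1}$). The genuine gap is Step 2, and it is not a repairable technicality: a $\C^k$ field tangent to a boundary of class exactly $\C^{k+1}$ cannot, in general, be approximated in the $\C^k$ norm by smooth tangent fields. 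Take $d=2$, $k=0$, and let a sub-arc of $\partial\Om$ be the graph of $\gamma\in\C^1$ with $\gamma'$ nowhere differentiable. If $W$ is any $\C^1$ field with $W\cdot n=0$ on this arc and $W(p)\neq 0$ at some point $p$ of the arc, then near $p$ the arc is a reparameterization of an integral curve of the nonvanishing $\C^1$ field $W$, hence of class $\C^2$ there --- a contradiction. So every $\C^1$ (a fortiori every smooth) tangent field vanishes identically on the arc, and so does every uniform limit of such fields; yet the continuous field equal to $(1,\gamma')/|(1,\gamma')|$ on the arc, extended and cut off, is tangent and nonvanishing. The same construction with $\gamma^{(k+1)}$ nowhere differentiable defeats every $k\geq 0$. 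Your mollification scheme also fails internally: the approximants $\alpha_j n_{\mathrm{ext}}+\VV_{T,j}$ inherit the regularity of $n_{\mathrm{ext}}$, which is only $\C^k$, so they are never smooth in the first place.

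The failure is structural, not cosmetic. Your proof uses only three properties of $dJ(\Om)$: linearity, $\C^k$-continuity, and vanishing on \emph{smooth} tangent fields. These do not imply the conclusion. On the wild curve above, the tangential circulation $g_0(\VV):=\int_{\partial\Om}\VV\cdot\tau\,ds$ ($\tau$ the unit tangent, $ds$ arclength) is linear, $\C^0$-continuous, and vanishes on all smooth tangent fields (they vanish identically on $\partial\Om$); but its continuous extension evaluated at the tangent field $\VV=\tau$ equals the length of $\partial\Om$, whereas any representation \eqref{volume} would force the value $l(0)=0$. Hence no argument built only from those three properties can be correct: one must use that $dJ(\Om)(\VV)$ is a limit of difference quotients of $J$ also at fields that are \emph{not} tangent. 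This is precisely what correct proofs do: one shows that a tangential field displaces the domain only to second order, $\operatorname{dist}(x+t\VV(x),\partial\Om)=o(t)$ uniformly on $\partial\Om$ when $\VV\cdot n=0$ (this is where the uniform $\C^{k+1}$ regularity is spent), and feeds this geometric estimate into the differentiability hypothesis to conclude $dJ(\Om)(\VV)=0$ for all $\C^k$ tangent fields directly, with no approximation inside the constraint set; see A.~Novruzi and M.~Pierre, \emph{Structure of shape derivatives}, J.\ Evol.\ Equ.\ 2 (2002), for this mechanism in the perturbation-of-identity setting. So your closing remark attributes the hypothesis $\partial\Om\in\C^{k+1}$ to the wrong step: it is not what makes a constraint-preserving mollification possible (nothing short of a smooth boundary does), it is what makes the $o(t)$ displacement estimate hold.
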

Theorem \ref{thm:structure_theorem} requires $\Om$ to be at least $\C^1$, however we show in Proposition \ref{tensor_relations} that even for Lipschitz domains one can obtain a boundary expression for the shape derivative, see \eqref{158}, even though we get a weaker structure than \eqref{volume} since the tangential component of $\VV$ may be present in  \eqref{158}.  
Recall that a bounded domain is called Lipschitz if it is locally representable as the graph of a Lipschitz function.
In this case, it is well-known that the surface measure  is well-defined on $\partial\Om$ and there exists an outward pointing normal vector $n$ at almost every point on $\partial\Om$; see  \cite[Section 4.2, p. 127]{MR1158660}.

In the rest of the paper, the exponents $+$ and $-$ denote the restrictions of functions to $\Omega$ and to $\D\setminus\overline \Omega$, respectively, and the notation  $\llbracket \phi\rrbracket : = \phi^+|_{\partial\Om} - \phi^-|_{\partial\Om} $ denotes the jump across the interface $\partial\Om$ of a given function $\phi$. 
\begin{proposition}\label{tensor_relations}
Suppose that $\Gamma_h\cap\po =\emptyset$,  $\Omega\in\OO$ and $\VV \in \C^1_c(\D\setminus \Gamma_h,\R^2)$, then we have 
\begin{align} \label{eq:equvilibrium_strong1}
\divv(\Sb_1^+)  &= (\Sb_0^r)^+ \quad \text{ a.e. in } \Omega\setminus \Gamma_h, \\
\label{eq:equvilibrium_strong2} \divv(\Sb_1^-) &= (\Sb_0^r)^-  \quad \text{ a.e. in } (\D\setminus\overline \Omega)\setminus \Gamma_h.
\end{align}
If  $\Sb_1^+\in W^{1,1}(\Om,\R^{2\times 2})$ and $\Sb_1^-\in W^{1,1}(\dd\setminus\overline{\Om},\R^{2\times 2})$, then
\begin{equation}\label{eq:first_order_tensor_2}
dJ(\Omega)(\VV) = \int_\Om \divv(\Sb_1^\transp\VV) 
+ \int_{\D\setminus \overline \Omega} \divv(\Sb_1^\transp\VV). 
\end{equation}
If in addition $\Om$ is Lipschitz, we also have the boundary expression
\begin{equation}\label{158}
dJ(\Omega)(\VV) = \int_{\partial \Omega} \llbracket \Sb_1 \rrbracket  n \cdot\VV . 
\end{equation}
If in addition $\Om$ is of class $\C^1$, we obtain the boundary expression 
\begin{equation}\label{eq:general_boundary_exp}
dJ(\Omega)(\VV) = \int_{\partial \Omega} (\llbracket \Sb_1 \rrbracket   n\cdot n )\, \VV\cdot n .
\end{equation}
\end{proposition}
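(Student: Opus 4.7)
The plan is to proceed in four steps, one per assertion. For the equilibrium equations \eqref{eq:equvilibrium_strong1}--\eqref{eq:equvilibrium_strong2}, I first exploit the fact that $\sigma_\Om$ is piecewise constant: classical interior regularity gives that $u$ and $p$ are smooth in each of the open sets $\Om\setminus\Gamma_h$ and $(\D\setminus\overline{\Om})\setminus\Gamma_h$, and satisfy the strong equations $-\sigma_1\Delta u = f_1$ in $\Om$, $-\sigma_0\Delta u = f_0$ in $\D\setminus\overline{\Om}$, and $-\sigma_\Om \Delta p = 0$ away from $\Gamma_h$ (the Dirac adjoint sources being concentrated there). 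Using the tensor identity
\[
\divv(\nabla u\odot\nabla p) = \tfrac12\big[(\Delta u)\nabla p + (\Delta p)\nabla u + \nabla(\nabla u\cdot\nabla p)\big]
\]
together with $\nabla(\sigma_\Om\nabla u\cdot\nabla p - fp) = \sigma_\Om\nabla(\nabla u\cdot\nabla p) - f\nabla p - p\nabla f$, the terms containing $\nabla(\nabla u\cdot\nabla p)$ cancel in $\divv(\Sb_1^+)$; substituting the strong equations then reduces $\divv(\Sb_1^+)$ to $-p\nabla f_1 = (\Sb_0^r)^+$, and the computation in $\D\setminus\overline{\Om}$ is analogous.

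For \eqref{eq:first_order_tensor_2}, I apply the pointwise identity $\Sb:D\VV = \divv(\Sb^\transp \VV) - (\divv \Sb)\cdot \VV$ separately on $\Om$ and $\D\setminus\overline{\Om}$; the $W^{1,1}$ assumption on $\Sb_1^\pm$ makes this rigorous. Integrating and invoking Step~1, $\int_\D \Sb_1:D\VV$ splits into the two divergence integrals appearing in \eqref{eq:first_order_tensor_2} minus $\int_\D \Sb_0^r\cdot \VV$. Plugging this into \eqref{T:tensor_shape_deriv} and noting that $\VV(x_k)=0$ for every $k$ (so that $\Sb_0^s(\VV)=0$) yields \eqref{eq:first_order_tensor_2}. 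For the Lipschitz case \eqref{158}, the divergence theorem on each Lipschitz subdomain converts each volume integral into a surface integral; the integrals over $\partial\D$ drop out by compact support of $\VV$ in $\D$, and the two $\partial\Om$ contributions, which carry opposite normals, combine into $\int_{\partial\Om}\llbracket\Sb_1\rrbracket n\cdot \VV$.

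The last step \eqref{eq:general_boundary_exp} is the most delicate: I show that the tangential component of $\llbracket\Sb_1\rrbracket n$ vanishes on a $\C^1$ interface. A direct calculation gives
\[
\Sb_1 n = -\sigma_\Om(\partial_n p)\nabla u - \sigma_\Om(\partial_n u)\nabla p + (\sigma_\Om\nabla u\cdot\nabla p - fp)\,n,
\]
whose tangential part equals $-\sigma_\Om(\partial_n p)\partial_\tau u - \sigma_\Om(\partial_n u)\partial_\tau p$. The weak formulations of the state and adjoint problems furnish the standard transmission conditions $\llbracket u\rrbracket = \llbracket p\rrbracket = 0$ (hence $\llbracket\partial_\tau u\rrbracket = \llbracket\partial_\tau p\rrbracket = 0$) and $\llbracket \sigma_\Om\partial_n u\rrbracket = \llbracket \sigma_\Om\partial_n p\rrbracket = 0$ on $\partial\Om$, since $\Gamma_h$ meets the curve $\partial\Om$ in at most finitely many points of zero surface measure. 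Factoring out the continuous tangential derivatives then yields $\llbracket(\Sb_1 n)\cdot\tau\rrbracket = 0$, so $\llbracket\Sb_1\rrbracket n$ is parallel to $n$ and \eqref{158} reduces to \eqref{eq:general_boundary_exp}. The main obstacle is justifying these transmission conditions rigorously for $p$, whose right-hand side contains Dirac measures and whose regularity is slightly below $H^1$; this demands a careful local argument in a neighbourhood of $\partial\Om$ disjoint from $\Gamma_h$.
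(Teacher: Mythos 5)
Your derivation of \eqref{eq:equvilibrium_strong1}--\eqref{eq:equvilibrium_strong2} is correct but follows a different route from the paper: you verify the identities by direct computation, using interior elliptic regularity ($p$ is harmonic away from $\Gamma_h$; $u\in H^{3}_{\mathrm{loc}}$ since $f_1\in H^1(\D)$, which suffices even though ``smooth'' is an overstatement) together with the algebraic identity $\divv(\nabla u\odot\nabla p)=\tfrac{1}{2}\left[(\Delta u)\nabla p+(\Delta p)\nabla u+\nabla(\nabla u\cdot\nabla p)\right]$. The paper instead notes that $dJ(\Om)(\VV)=0$ whenever $\VV$ is compactly supported in $\Om\setminus\Gamma_h$ (such flows leave $\Om$, hence $J$, unchanged), inserts this into \eqref{T:tensor_shape_deriv}, and recovers the equilibrium equations via the fundamental lemma of the calculus of variations; that argument never differentiates $\Sb_1$ explicitly and interprets \eqref{eq:equvilibrium_strong1}--\eqref{eq:equvilibrium_strong2} as optimality-type conditions. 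Both are valid. Your Steps 2 and 3 — splitting $\int_\D\Sb_1:D\VV$ over the two subdomains with $\Sb_1:D\VV=\divv(\Sb_1^\transp\VV)-(\divv\Sb_1)\cdot\VV$, noting $\Sb_0^s(\VV)=0$, then applying the divergence theorem on each Lipschitz piece so that the opposite normals produce the jump — coincide with the paper's proof of \eqref{eq:first_order_tensor_2} and \eqref{158}.

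The last step, however, contains a genuine gap, which you flag but do not close, and which cannot be closed by your route under the stated hypotheses. To factor $\llbracket(\Sb_1 n)\cdot\tau\rrbracket=-\llbracket\sigma_\Om(\dn p)\,\partial_\tau u\rrbracket-\llbracket\sigma_\Om(\dn u)\,\partial_\tau p\rrbracket=0$ you need (i) that $\nabla u^\pm$ and $\nabla p^\pm$ possess a.e.-defined traces on $\po$ \emph{separately}, and (ii) that the transmission conditions, which the weak formulations provide only in a duality/distributional sense, hold pointwise a.e.\ so that products of jumps can be manipulated. Neither is available: the hypothesis $\Sb_1^{\pm}\in W^{1,1}$ yields an $L^1(\po)$ trace of the quadratic tensor $\Sb_1$, but trace regularity of the individual factors $\nabla u^\pm$, $\nabla p^\pm$ cannot be extracted from that of their products; and for an interface that is merely $\C^1$ — not $\C^{1,\alpha}$ or Dini — piecewise gradient regularity up to $\po$ for transmission problems is exactly the delicate point, so it cannot be invoked as ``standard'' (this concerns $u$ just as much as $p$). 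The paper avoids all of this: from \eqref{158} it deduces that $dJ(\Om)$ is continuous for the $\C^0$-topology, invokes Zol\'esio's structure theorem (Theorem \ref{thm:structure_theorem} with $k=0$) to get $dJ(\Om)(\VV)=l(\VV|_{\po}\cdot n)$, and then evaluates $dJ(\Om)$ at a continuous extension $\hat\VV$ of $(\VV|_{\po}\cdot n)\,n$ using \eqref{158}; this yields \eqref{eq:general_boundary_exp} with no transmission conditions and no gradient traces beyond those of $\Sb_1^\pm$. Your intermediate claim that $\llbracket\Sb_1\rrbracket n$ is parallel to $n$ a.e.\ is in fact a \emph{consequence} of the proposition, but it is obtained at the level of the functional, not from PDE regularity; as written, your argument for it is not justified. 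A minor point: you say $\Gamma_h$ meets $\po$ ``in at most finitely many points''; the standing assumption is $\Gamma_h\cap\po=\emptyset$, so these two compact sets are at positive distance, which is what any local argument near $\po$ would actually use.
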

\begin{proof} In view of \cite[Theorem 2.2]{MR3535238}, if $\VV$ has compact support in $\Om$ then the shape derivative vanishes.
Assume $\VV \in \C^{1}_c(\Om\setminus \Gamma_h,\R^2)$ and denote $U:=\operatorname{supp}\VV\subset \Om\setminus \Gamma_h$, then $u$ and $p$ are clearly harmonic on $U$ since $\sigma$ is constant on $U$. 
In view of \eqref{S1_1} and the regularity of $f_\Om$, this yields $\Sb_1\in L^1(U)$ and  $\divv(\Sb_1)\in L^1(U)$.
Thus, we can write the tensor relation $\divv(\Sb_1^\transp \VV) = \Sb_1 : D\VV + \VV\cdot \divv(\Sb_1)$.
For such $\VV$ we also have $ \VS^s_0(\VV)   = 0$, so we obtain
\begin{align}
\notag dJ(\Omega)(\VV) & = \VS^s_0(\VV)   +  \int_\D \Sb_1 : D\VV +\Sb_0^r\cdot \VV \\
\label{eq:77} & = \int_U  \divv(\Sb_1^\transp\VV) +\VV\cdot( \Sb_0^r - \divv\Sb_1)
= 0\qquad \mbox{ for all }\VV \in \C^{1}_c(\Om\setminus \Gamma_h,\R^2). 
\end{align} 
Since $\operatorname{supp}\VV =U \subset \Om\setminus \Gamma_h$, we can extend $\Sb_1^\transp\VV$ and $\VV\cdot( \Sb_0^r - \divv\Sb_1)$ by zero on $\mathcal{B}$, where $\mathcal{B}$ is a sufficiently large open ball which contains $U$.  
We keep the same notation for the extensions for simplicity.
Since the extension satisfies $\Sb_1^\transp\VV\in W^{1,1}(\mathcal{B},\R^2)$, using the divergence theorem (for instance \cite[Section 4.3, Theorem 1]{MR1158660}) in $\mathcal{B}$ we get
\begin{align*}
\int_U  \divv(\Sb_1^\transp\VV) +\VV\cdot( \Sb_0^r - \divv\Sb_1)
& = \int_{\mathcal{B}}  \divv(\Sb_1^\transp\VV) +\VV\cdot( \Sb_0^r - \divv\Sb_1)\\
&=  \int_{\partial\mathcal{B}} (\Sb_1^\transp\VV)\cdot n +\int_{\mathcal{B}}\VV\cdot( \Sb_0^r - \divv\Sb_1)\\
& =  \int_{\Om}\VV\cdot( \Sb_0^r - \divv\Sb_1) = 0, \qquad\mbox{ for all }\VV\in \C^{1}_c(\Om\setminus \Gamma_h,\R^2), 
\end{align*}
which proves \eqref{eq:equvilibrium_strong1}.
Then, we can prove  \eqref{eq:equvilibrium_strong2} in a similar way by taking a vector $\VV\in \C^{1}_c((\D\setminus\overline \Omega)\setminus \Gamma_h,\R^2)$.

Now let us assume that $\VV \in \C^{1}_c(\D\setminus \Gamma_h,\R^2)$ and denote  $U_2:=\operatorname{supp}\VV\subset \D\setminus \Gamma_h$.
By standard elliptic regularity, we have $u\in H^1(U_2)$ and $p\in H^1(U_2)$.
If we assume $\Sb_1^+\in W^{1,1}(\Om,\R^{2\times 2})$ and $\Sb_1^-\in W^{1,1}(\dd\setminus\overline{\Om},\R^{2\times 2})$, 
taking $\VV \in \C^{1}_c(\D\setminus \Gamma_h,\R^2)$ and using \eqref{eq:equvilibrium_strong1}-\eqref{eq:equvilibrium_strong2} we obtain
\begin{align*}
dJ(\Omega)(\VV) & = \VS_0(\VV)   +  \int_\D \Sb_1 : D\VV +\Sb_0^r\cdot \VV \\
& = \int_{\Om} \Sb_1 : D\VV +\Sb_0^r\cdot \VV + \int_{\D\setminus \overline \Omega} \Sb_1 : D\VV +\Sb_0^r\cdot \VV\\
& = \int_{\Om}  \divv(\Sb_1^\transp\VV) +\VV\cdot( \Sb_0^r - \divv\Sb_1)
+ \int_{\D\setminus \overline \Omega}  \divv(\Sb_1^\transp\VV) +\VV\cdot( \Sb_0^r - \divv\Sb_1)\\ 
& = \int_\Om \divv(\Sb_1^\transp\VV) 
+ \int_{\D\setminus \overline \Omega} \divv(\Sb_1^\transp\VV),
\end{align*} 
which yields \eqref{eq:first_order_tensor_2}.
If in addition $\Om$ is Lipschitz, applying the divergence theorem to \eqref{eq:first_order_tensor_2} we get \eqref{158}.

In view of \eqref{158}, we have that $dJ(\Omega )$ is continuous for the $\C^0(\dd ,\R^d)$-topology.
Thus, if $\Om$ is of class $\C^1$, we can apply Theorem  \ref{thm:structure_theorem} with $k=0$.
With $\Om$ of class $\C^1$, we also have $n\in\C^0(\po,\R^2)$ and $(\VV_{|\partial \Omega }\cdot n)n\in\C^0(\po,\R^2)$.
Let $\hat\VV\in\C^0_c(\D\setminus \Gamma_h ,\R^2)$ be an extension of $(\VV_{|\partial \Omega }\cdot n)n$, then using Theorem  \ref{thm:structure_theorem} we obtain
\begin{align*}
dJ(\Omega)(\VV) 
= l(\VV_{|\partial \Omega }\cdot n)
& = l(\hat\VV_{|\partial \Omega }\cdot n)
= dJ(\Omega)(\hat\VV) \\
= \int_{\partial \Omega} ((\Sb_1^+ - \Sb_1^-) n) \cdot\hat\VV 
& = \int_{\partial \Omega} (\llbracket \Sb_1 \rrbracket  n) \cdot ((\VV\cdot n) n) 
= \int_{\partial \Omega} (\llbracket \Sb_1 \rrbracket  n\cdot n) \VV\cdot n, 
\end{align*}
which yields expression \eqref{eq:general_boundary_exp}.
\end{proof}
\begin{remark}
Proposition \ref{tensor_relations} is in fact valid for any shape functional whose distributed shape derivative can be written using a tensor expression of the type  \eqref{T:tensor_shape_deriv}, and which satisfies the appropriate regularity assumptions. 
Note that in general, one should not expect that the assumption  $\Sb_1^+\in W^{1,1}(\Om,\R^{2\times 2})$ and $\Sb_1^-\in W^{1,1}(\dd\setminus\overline{\Om},\R^{2\times 2})$ in Proposition \ref{tensor_relations} can be satisfied for any Lipschitz set $\Om$.
For instance in the case of the Dirichlet Laplacian, one can actually build pathological Lipschitz domains for which $\Sb_1$ does not have such regularity; see 
\cite[Corollary 3.2]{Costabel2019}.
However, these regularity assumptions for $\Sb_1^+,\Sb_1^-$ can be fulfilled for polygonal domains, as shown in Corollary \ref{cor:11}.
\end{remark}
\begin{corollary}\label{cor:11}
Suppose that $\Gamma_h\cap\po =\emptyset$, $f_0\in\C^\infty(\D)$, $\Omega\in\OO$ and $\VV \in \C^{1}_c(\D\setminus \Gamma_h,\R^2)$.
If $\Om$ is Lipschitz polygonal or if $\Om$ is of class $\C^1$, then we have
\begin{align}\label{bdr_expr}
dJ(\Omega)(\VV) = \int_{\partial \Omega} (\llbracket \sigma \dn \sol \dn p\rrbracket  
+ \llbracket\sigma\rrbracket  \nabla_{\partial \Omega} \sol\cdot\nabla_{\partial \Omega} p  -\llbracket f_\Om\rrbracket  p) \VV\cdot n,
\end{align} 
where $ \nabla_{\partial \Omega}$ denotes the tangential gradient on $\partial\Om$.
\end{corollary}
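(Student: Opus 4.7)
The plan is to invoke Proposition \ref{tensor_relations} and then compute $\llbracket \Sb_1 \rrbracket n$ explicitly from the tensor formula \eqref{S1_1} using the transmission conditions satisfied by $u$ and $p$ across $\partial\Om$. The argument splits naturally into three steps.

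\emph{Step 1 (regularity of $\Sb_1^\pm$).} I would first verify the hypothesis $\Sb_1^+ \in W^{1,1}(\Om,\R^{2\times 2})$ and $\Sb_1^- \in W^{1,1}(\D \setminus \overline\Om,\R^{2\times 2})$ required by Proposition \ref{tensor_relations}. Since $\sigma_\Om$ is constant on each side, $-\sigma_i \Delta u = f_i$ holds in $\Om$ and in $\D\setminus\overline\Om$; and since the adjoint source consists only of Dirac masses at points of $\Gamma_h$, which is disjoint from $\partial\Om$, the adjoint $p$ solves a homogeneous transmission problem in a neighborhood of $\partial\Om$. For a $\C^1$ interface, classical elliptic regularity for transmission problems gives $u,p \in H^2$ locally up to $\partial\Om$ from either side, and hence $\Sb_1^\pm \in W^{1,1}$. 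For a Lipschitz polygonal interface, Grisvard/Kondratiev-type results for transmission problems on polygons yield piecewise $H^2$ regularity away from the finitely many corners with controlled corner singularities, which is ample for the $W^{1,1}$ requirement. The assumption $f_0\in\C^\infty(\D)$ is used here to remove any interference from the right-hand side of the state equation in $\D\setminus\overline\Om$.

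\emph{Step 2 (jump computation).} Starting from \eqref{S1_1} and the identity $(\nabla u \odot \nabla p)n = \tfrac12(\dn p\, \nabla u + \dn u\, \nabla p)$, decomposing $\nabla u = (\dn u)n + \nabla_{\partial\Om} u$ and likewise for $p$ yields
\[
\Sb_1 n = \bigl(-\sigma_\Om\, \dn u\, \dn p + \sigma_\Om\, \nabla_{\partial\Om} u \cdot \nabla_{\partial\Om} p - f_\Om p\bigr)n - \sigma_\Om\, \dn p\, \nabla_{\partial\Om} u - \sigma_\Om\, \dn u\, \nabla_{\partial\Om} p.
\]
Across $\partial\Om$, continuity of $u$ and $p$ implies $\llbracket \nabla_{\partial\Om} u \rrbracket = \llbracket \nabla_{\partial\Om} p \rrbracket = 0$, while the natural transmission conditions from the variational formulations of the state and adjoint equations yield $\llbracket \sigma \dn u \rrbracket = \llbracket \sigma \dn p \rrbracket = 0$. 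Taking the jump, the two tangential contributions cancel, so the tangential part of $\llbracket \Sb_1 \rrbracket n$ vanishes a.e.\ on $\partial\Om$, while its normal part reduces precisely to the integrand of \eqref{bdr_expr}.

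\emph{Step 3 (assembly).} For a $\C^1$ interface, \eqref{eq:general_boundary_exp} of Proposition \ref{tensor_relations} directly delivers \eqref{bdr_expr}. For a Lipschitz polygonal interface, \eqref{158} gives only $dJ(\Om)(\VV) = \int_{\partial\Om} \llbracket \Sb_1 \rrbracket n \cdot \VV$; splitting $\VV = (\VV\cdot n)n + \VV_\tau$ on $\partial\Om$ and invoking the vanishing of the tangential part of $\llbracket\Sb_1\rrbracket n$ from Step 2 reduces the integral to \eqref{bdr_expr}. The main obstacle is Step 1 in the polygonal case: one must show $\Sb_1^\pm \in W^{1,1}$ even near the corners of $\partial\Om$ where $u$ and $p$ may lose regularity. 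This is handled by Kondratiev's description of corner singularities as homogeneous functions $r^\alpha$ with $\alpha>0$, whose derivatives remain integrable in two dimensions for interior angles strictly less than $2\pi$, which is the case for any nondegenerate polygon.
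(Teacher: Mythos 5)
Your Steps 2 and 3 coincide with the paper's own route: establish $\Sb_1^+\in W^{1,1}(\Om,\R^{2\times 2})$ and $\Sb_1^-\in W^{1,1}(\D\setminus\overline{\Om},\R^{2\times 2})$, invoke Proposition \ref{tensor_relations} (\eqref{eq:general_boundary_exp} in the $\C^1$ case, \eqref{158} in the polygonal case), and remove the tangential contributions using $\llbracket\sigma\dn\sol\rrbracket=\llbracket\sigma\dn p\rrbracket=0$ together with the continuity of $\nabla_{\partial\Om}\sol$ and $\nabla_{\partial\Om}p$. One caution on Step 2: if you actually take the jump in your (correct) formula for $\Sb_1 n$, its normal part is
\[
\llbracket\Sb_1\rrbracket n\cdot n=-\llbracket\sigma\dn\sol\,\dn p\rrbracket+\llbracket\sigma\rrbracket\,\nabla_{\partial\Om}\sol\cdot\nabla_{\partial\Om}p-\llbracket f_\Om\rrbracket p,
\]
with a \emph{minus} sign on the first term; a one-dimensional transmission example confirms that this is the sign consistent with \eqref{S1_1} and \eqref{158}. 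So the assertion that the jump ``reduces precisely to the integrand of \eqref{bdr_expr}'' contradicts your own displayed formula (the same sign discrepancy appears in the paper's intermediate display); this step requires the explicit computation and a reconciliation of signs, not just the claim.

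The genuine gap is Step 1 in the Lipschitz polygonal case. The $W^{1,1}$ property of $\Sb_1^\pm$ is a statement about \emph{products} of derivatives: $\nabla(\nabla\sol\cdot\nabla p)$ contains $D^2\sol\,\nabla p$ and $D^2p\,\nabla\sol$. Near a vertex where $\sol\sim r^{\lambda_u}$ and $p\sim r^{\lambda_p}$, such a product behaves like $r^{\lambda_u+\lambda_p-3}$, which is integrable in two dimensions iff $\lambda_u+\lambda_p>1$. Your criterion --- positivity of the exponents and opening angles below $2\pi$ --- is therefore insufficient: if $\lambda_u=\lambda_p=2/5$, each singular function separately has integrable second derivatives ($r^{\alpha-2}$ is integrable near the vertex for every $\alpha>0$), yet $\Sb_1\notin W^{1,1}$. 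Moreover, for transmission corners the exponents are not governed by the angle alone (unlike the Dirichlet Laplacian, where $\omega<2\pi$ gives $\lambda=\pi/\omega>1/2$); they depend on the contrast $\sigma_1/\sigma_0$ through a transcendental equation, and can be arbitrarily close to $0$ for multi-material junctions. The bound $\lambda_\ell>1/2$ --- exactly what guarantees $\lambda_u+\lambda_p>1$ --- is a nontrivial theorem special to two-phase scalar transmission problems, and supplying it is the core of the paper's proof: the paper introduces a smooth intermediate domain $\widehat\D$ with $\overline{\Om}\subset\widehat\D\subset\D$ (this is where $f_0\in\C^\infty(\D)$ is used: $u$ and $p$ are then smooth on $\partial\widehat\D$, so that $u|_{\widehat\D}$ and $p|_{\widehat\D}$ solve transmission problems with smooth Dirichlet data, fitting the framework of \cite{MR1274152}), applies the $W^{2,4/3}$ decomposition of \cite[Theorem 7.3]{MR1274152}, and invokes \cite[Theorem 8.1(ii)]{MR1713241} for $\lambda_\ell>1/2$, so that the singular parts also lie in $W^{2,4/3}$; then $u^\pm,p^\pm\in W^{2,4/3}$, the embedding $W^{1,4/3}\subset L^4$, and H\"older ($L^{4/3}\cdot L^4\subset L^1$) give $\Sb_1^\pm\in W^{1,1}$. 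The exponent $4/3$ is calibrated precisely so that $r^{\lambda-2}\in L^{4/3}$ iff $\lambda>1/2$. Without this quantitative control of the corner exponents your Step 1 does not close. (A smaller remark: in the $\C^1$ case, piecewise $H^2$ regularity is not ``classical'' for merely $\C^1$ interfaces --- one usually needs $\C^{1,\alpha}$ --- though the paper is equally terse on this point.)
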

\begin{proof}
In the case where $\Om$ is of class $\C^1$, a quick calculation using \eqref{eq:general_boundary_exp} and \eqref{S1_1} yields \eqref{bdr_expr}.

In the case where $\Om$ is polygonal, we can proceed in the following way. 
Let $\widehat\D$ be a smooth open set such that $\overline{\Om}\subset\widehat\D\subset\D$ and the boundaries of $\Om$ and $\widehat\D$ are at a positive distance. 
Since $f_0\in\C^\infty(\D)$, using elliptic regularity  we get that $u$ and $p$ are $\C^\infty$ on $\partial\widehat\D$.
Thus, $u|_{\widehat\D}$ and $p|_{\widehat\D}$ are also solutions of transmission problems defined in $\widehat\D$ with smooth inhomogeneous Dirichlet conditions on $\partial\widehat\D$, and consequently we are in the framework considered in  \cite{MR1274152}.
Denote $L$ the number of vertices of the polygon $\Om$.
We apply \cite[Theorem 7.3]{MR1274152} in the case $k=0$, $m=1$ and for the regularity $W^{2,4/3}$.
This yields the decomposition $u|_{\widehat\D} = u_0 + \sum_{\ell\in L} S_\ell$ with $u_0^+\in  W^{2,4/3}(\Om)$, $u_0^-\in  W^{2,4/3}(\widehat\dd\setminus\overline{\Om})$ and $S_\ell$ are singular functions with support in the neighbourhood of the vertices of $\Om$. 
Here $S_\ell(r_\ell,\theta_\ell)$ are of the type $r_\ell^{\lambda_\ell} v(r_\ell,\theta_\ell)$, where $(r_\ell,\theta_\ell)$ are local polar coordinates at the vertex $\ell$ and $v(\theta_\ell)$ is a linear combination of $\sin(\lambda_\ell\theta_\ell)$ and $\cos(\lambda_\ell\theta_\ell)$. 
It is shown in \cite[Theorem 8.1(ii)]{MR1713241} that $\lambda_\ell>1/2$ for all $\ell=1,\dots, L$.
Thus, we also obtain $\sum_{\ell\in L} S_\ell^+\in  W^{2,4/3}(\Om)$ and $\sum_{\ell\in L} S_\ell^-\in  W^{2,4/3}(\widehat\dd\setminus\overline{\Om})$.

Proceeding in a similar way for $p$ and gathering the results, we obtain the regularity $u^+,p^+\in W^{2,4/3}(\Om)$ and $u^-,p^-\in W^{2,4/3}(\widehat\dd\setminus\overline{\Om})$.
Then we have $\nabla(\nabla u\cdot\nabla p) = D^2 u p + D^2p u$ and using $(D^2u)^+,(D^2p)^+\in L^{4/3}(\Om)$ and $(\nabla u)^+,(\nabla p)^+\in W^{1,4/3}(\Om)\subset L^4(\Om)$ and the same regularity on $\widehat\dd\setminus\overline{\Om}$, we obtain  $\Sb_1^+\in W^{1,1}(\Om,\R^{2\times 2})$ and $\Sb_1^-\in W^{1,1}(\widehat\dd\setminus\overline{\Om},\R^{2\times 2})$.

Then, using the fact that $\VV \in \C^{1}_c(\D\setminus \Gamma_h,\R^2)$ we obtain in view of \eqref{158} of Proposition \ref{tensor_relations}
\begin{align*}
dJ(\Omega)(\VV) 
& = \int_{\partial \Omega} 
\llbracket \sigma \dn \sol\rrbracket   \nabla_{\partial \Omega} \sol\cdot\VV
+ \llbracket \sigma \dn p\rrbracket   \nabla_{\partial \Omega} p\cdot\VV
+(\llbracket \sigma \dn \sol \dn p\rrbracket 
+ \llbracket\sigma\rrbracket  \nabla_{\partial \Omega} \sol\cdot\nabla_{\partial \Omega} p  
-\llbracket f_\Om\rrbracket  p) \VV\cdot n.
\end{align*} 
Finally, using the fact that $\llbracket \sigma \dn \sol\rrbracket =0$ and $\llbracket \sigma \dn p\rrbracket  = 0$ we obtain \eqref{bdr_expr}.
\end{proof}

\begin{remark}
Expressions similar to \eqref{bdr_expr} are known when $\Om$ is at least $\C^1$, see \cite{MR3535238} and \cite{MR2329288}.
It is remarkable that one obtains the same expression \eqref{bdr_expr} when $\Om$ is only Lipschitz polygonal.
Also, note that \eqref{bdr_expr} is similar to the formula obtained in \cite{MR3723652} for a polygonal inclusion in EIT, which was obtained in the framework of the perturbation of identity method. 
In \cite{MR3723652}, an estimate of the singularity of the gradient in the neighbourhood of the vertices of the polygonal inclusion was used to obtain the boundary expression. 
Here, we have used higher regularity of $u$ and $p$ in the subdomains  $\Om$ and $\widehat\dd\setminus\overline{\Om}$ to obtain \eqref{bdr_expr}.
The key idea of these two approaches is to control the singularity of the gradients of $u$ and $p$ near the vertices of the polygonal inclusion.  
\end{remark}

\section{Numerical experiments}\label{sec:numerics}
We use the software package FEniCS for the implementation; see \cite{ans20553,langtangen2017solving,fenics:book}.  
For the numerical tests  the conductivity values are set to $\sigma_0 = 1$ and $\sigma_1 = 10$. 
We choose $f_\Om\equiv 0$,  $\D=(0,1)\times(0,1)$ and 
$$\Gamma = \partial D\setminus ([0.4,0.6]\times \{0\}\cup [0.4,0.6]\times \{1\}).$$
The domain $\D$ is meshed using a regular grid of $128 \times 128$ elements.
For the measurement points we choose $\Gamma_h = \{x_k\}_{k=1}^{K}\subset\Gamma$.
Recall that no measurements are performed on $\Gamma_0=\partial\D\setminus\Gamma$ and that $u$ satisfies Dirichlet boundary condition on $\Gamma_0$.

Synthetic measurements $\{h_i(x_k)\}_{k=1}^K$ are obtained by taking the trace on $\Gamma$ of the solution
of \eqref{E:var_form} using the ground truth domain $\Om^\star$,  $f_{\Om^\star}\equiv 0$ and  currents $g_i$, $i = 1, \dots, I$. 
To simulate noisy EIT data, each measurement $h_i$
is corrupted by adding a normal Gaussian noise with mean zero and standard deviation $\delta * \|h_i\|_{\infty}$ , where $\delta$ is
a parameter. 
The noise level is then computed as
\begin{align}
noise =& \frac{\sum_{i=1}^I (\sum_{k=1}^K |h_i(x_{k}) - \tilde{h}_i(x_k)|^2)^{1/2}}{\sum_{i=1}^I (\sum_{k=1}^K |h_i(x_k)|^2)^{1/2}},
\end{align}
where $h_i(x_k)$ and $\tilde{h}_i(x_k)$ are respectively the noiseless and noisy point measurements at $x_k$ corresponding to the current $g_i$.

In the numerical tests, we use two different sets of fluxes, i.e. $I\in\{3,7\}$, to obtain measurements.
Denote $\Gamma_{\rm{upper}}$, $\Gamma_{\rm{lower}}$, $\Gamma_{\rm{left}}$ and $\Gamma_{\rm{right}}$ the four sides of the square $\Omega$.
When $I=3$ we take
\begin{align*}
g_1 &= 1\mbox{ on }\Gamma_{\rm{left}}\cup\Gamma_{\rm{right}} \mbox{ and } g_1 = -1\mbox{ on }\Gamma_{\rm{upper}}\cup\Gamma_{\rm{lower}}, \\ 
g_2 &= 1\mbox{ on }\Gamma_{\rm{left}}\cup\Gamma_{\rm{upper}} \mbox{ and } g_2 = -1\mbox{ on }\Gamma_{\rm{right}}\cup\Gamma_{\rm{lower}},\\
g_3 &= 1\mbox{ on }\Gamma_{\rm{left}}\cup\Gamma_{\rm{lower}} \mbox{ and } g_3 = -1\mbox{ on }\Gamma_{\rm{right}}\cup\Gamma_{\rm{upper}}.
\end{align*}
When $I=7$ we take in addition a smooth approximation of the following piecewise constant function:
\begin{align*}
g_4 &= 1\mbox{ on }\Gamma_{\rm{left}}\cap \{x_2 > 0.5\},\
g_4 = -1\mbox{ on }\Gamma_{\rm{left}}\cap\{x_2 \leq 0.5\} \mbox{ and } g_4=0 \mbox{ otherwise}, 
\end{align*}
and $g_5,g_6,g_7$ are defined in a similar way on $\Gamma_{\rm{right}}$, $\Gamma_{\rm{upper}}$, $\Gamma_{\rm{lower}}$, respectively.

For the numerics we use the cost functional given by \eqref{E:cost_full}:
\begin{align}
\label{eit3.4}  J(\Om) & = \frac{1}{2}\sum_{i=1}^I  \mu_i \sum_{k=1}^K (\sol_i(x_k) - h(x_k))^2,
\end{align}
where $u_i$ is the potential associated with the current $g_i$.
The weights $\mu_i$ associated with the current $g_i$ are chosen as the inverse of $\sum_{k=1}^K (\sol_i(x_k) - h(x_k))^2$ computed at the initial guess. 
In this way, each term in the sum over $I$ is equal to $1$ at the first iteration, and the initial value of $J(\Om)$ is equal to $I/2$.

To get a relatively smooth descent direction we solve the following partial differential equation: find $V\in H^1_0(\dd)^2$ such that
$$ \int_{\dd}  \alpha_1 D\VV : D\xi  + \alpha_2 \VV \cdot\xi  =  - dJ(\Om)(\xi)\mbox{ for all }\xi\in H^1_0(\dd)^2.$$
For the numerical tests, we chose $\alpha_1 = 0.3$ and $\alpha_2 = 0.7$.
To simplify the implementation, we use Dirichlet conditions on $\partial\dd$ instead of the compact support condition $\VV \in \C^{1}_c(\D\setminus \Gamma_h,\R^2)$ (see Section \ref{sec:prel2}).
Considering that $f_\Om\equiv 0$ in $\D$, $\VV = 0$ on $\partial\dd$  and that the points $\{x_k\}_{k=1}^{K}$ belong to $\Gamma$, in view of Theorem \ref{T:shape} we get $\Sb_0^s(\VV) =0$ which leads to the following equation for $\VV$:
\begin{align*}
\int_{\dd}   \alpha_1 D\VV : D\xi  + \alpha_2 \VV \cdot\xi  
& =  - \int_{\dd} -2\sigma_\Om (\nabla \sol \odot \nabla p): D\xi + (\sigma_\Om  \nabla u\cdot\nabla p  ) \mI : D\xi\quad \mbox{ for all }\xi\in H^1_0(\dd)^2.
\end{align*} 
The relative reconstruction  error $E(\Om^r)$ is defined as
$$ E(\Om^r) := \frac{\displaystyle\int_{\dd} |\chi_{\Om^\star} - \chi_{\Om^r}|}{\displaystyle\int_{\dd} \chi_{\Om^\star}},$$
where  $\Om^r$ is the set obtained in the last iteration of the minimization algorithm.
We use $E(\Om^r)$ as a measure of the quality of the reconstructions.

We present three numerical experiments.  
In the first experiment, the ground truth consists of two ellipses and we use $I=3$ currents; see Figure \ref{fig:2ellipses}.
In the second experiment, the ground truth is a concave shape with one connected component and we use $I=3$ currents; see Figure \ref{fig:concave}.
In the third experiment, the ground truth consists of two ellipses and one ball and we use $I=7$ currents; see Figure \ref{fig:3ellipses}.
For each experiment, we study the influence of the point measurements patterns by comparing the reconstructions obtained using three different sets $\Gamma_h = \{x_k\}_{k=1}^{K}$ with $K\in\{16,34,70\}$.
The point measurements patterns and the corresponding reconstructions are presented in Figures \ref{2ellipses_nbpt}, \ref{concave_nbpt} and \ref{3ellipses_nbpt}, for the respective experiments.
We observe,  as expected, that the reconstructions improve as $K$ becomes larger.
However, one obtains reasonable reconstructions in the case of the concave shape with $I=3$ currents and in the case of the two ellipses and ball with $I=7$ currents, even for $K=16$ points and in the presence of noise; see Figures \ref{concave_nbpt} and \ref{3ellipses_nbpt}. 
In the case of two ellipses, the deterioration of the reconstruction for $K=16$ points is much stronger compared to the case $K=70$.
This indicates that the number of current $I=3$ is too low to reconstruct two ellipses with only $K=16$ points. 
We conclude from these results that the amount of applied currents is more critical than the number of point measurements to obtain a good reconstruction.  

For each experiment, we also study how the noise level affects the reconstruction depending on the amount of point measurements.
The results are gathered in Tables \ref{fig:noise_influence_2ellipses}, \ref{fig:noise_influence_concave} and \ref{fig:noise_influence}, where the rows correspond to three different levels of noise, and the columns to three different numbers of points $K\in\{16,34,70\}$.
In the case of two ellipses (Table \ref{fig:noise_influence_2ellipses}), the reconstruction using $K=70$ is very robust with respect to noise, whereas it deteriorates considerably using $K=16$.
In the cases of the concave shape (Table \ref{fig:noise_influence_concave}) and of the two ellipses and ball (Table \ref{fig:noise_influence}), the degradations of the reconstructions when the noise becomes larger are of a similar order in terms of reconstruction error, independently of the value of $K$. 
These results indicate that a larger number of points $K$ may improve the robustness of the reconstruction with respect to noise mainly when the number $I$ of currents is low compared to the complexity of the ground truth.\\

\begin{figure}[ht]
\centering
\begin{subfigure}[t]{0.334\textwidth}
    \centering
\definecolor{preto}{RGB}{0,0,0}
\definecolor{vermelho}{RGB}{254,49,49}
\definecolor{azul_escuro}{RGB}{0,0,255}
\definecolor{cinza}{RGB}{195,195,195}
\definecolor{papel_amarelo}{RGB}{255, 249, 240}

\begin{tikzpicture}[scale=4.1]
\draw[line width=0.5mm, preto](0.4,1) -- (0.6,1);
\draw[line width=0.5mm, preto](0.4,0) -- (0.6,0);
	\draw[line width=0.7mm, vermelho](0,1) -- (0.4,1);
	\draw[line width=0.7mm, vermelho](0,0) -- (0,1);	
	\draw[line width=0.7mm, vermelho] (0,0) -- (0.4,0);
    \draw[line width=0.7mm, vermelho](0.6,1) -- (1,1);
	\draw[line width=0.7mm, vermelho](1,0) -- (1,1);
	\draw[line width=0.7mm, vermelho] (0.6,0) -- (1,0);
	\draw[line width=0.9mm, preto](0.6,0.97) -- (0.6,1.03);
	\draw[line width=0.9mm, preto](0.4,0.97) -- (0.4,1.03);
	\draw[line width=0.9mm, preto](0.6,-0.03) -- (0.6,0.03);
	\draw[line width=0.9mm, preto](0.4,-0.03) -- (0.4,0.03);
	\draw[line width=0.3mm, preto] (0.6,0.7) ellipse (0.144cm and 0.08cm) node[anchor= center] {$\Om$};
\fill[vermelho, opacity=0.2] (0.6,0.7) ellipse (0.144cm and 0.08cm);
\draw[line width=0.3mm, preto] (0.4,0.3) ellipse (0.08cm and 0.144cm) node[anchor= center] {$\Om$};
\fill[vermelho, opacity=0.2] (0.4,0.3) ellipse (0.08cm and 0.144cm);
	\def\raio{0.015}
	\def\transparenc{0.7}	
	\def\h{0.05};
	\foreach \x in {0,\h,...,1.0}{
		 \draw[fill= preto, opacity=\transparenc] (0,\x) circle (\raio cm);
}
	\foreach \x in {0,\h, ...,1.0}{
		 \draw[fill= preto, opacity=\transparenc] (1,\x) circle (\raio cm);
}
	\foreach \x in {0.0,\h , ..., 0.4}{
		 \draw[fill= preto, opacity=\transparenc] (\x,1) circle (\raio cm);
}
	\foreach \x in {0.65, 0.7, ...,1.05}{
		 \draw[fill= preto, opacity=\transparenc] (\x,1) circle (\raio cm);
}
	\foreach \x in {0.0, 0.05, ...,0.4}{
		 \draw[fill= preto, opacity=\transparenc ] (\x,0) circle (\raio cm);
}
	\foreach \x in {0.65,0.7, ..., 1}{
		 \draw[fill= preto, opacity=\transparenc] (\x,0) circle (\raio cm);
}
	\draw (0.5, -0.08) node[anchor= center] {$\Gamma_0$};
	\draw (0.5, 1.08) node[anchor= center] {$\Gamma_0$};
\end{tikzpicture}
    \caption{point measurements pattern}
\end{subfigure}%
~    
    \begin{subfigure}[t]{0.334\textwidth} 
        \centering
        \includegraphics[height=1.99in,width=1.99in]{./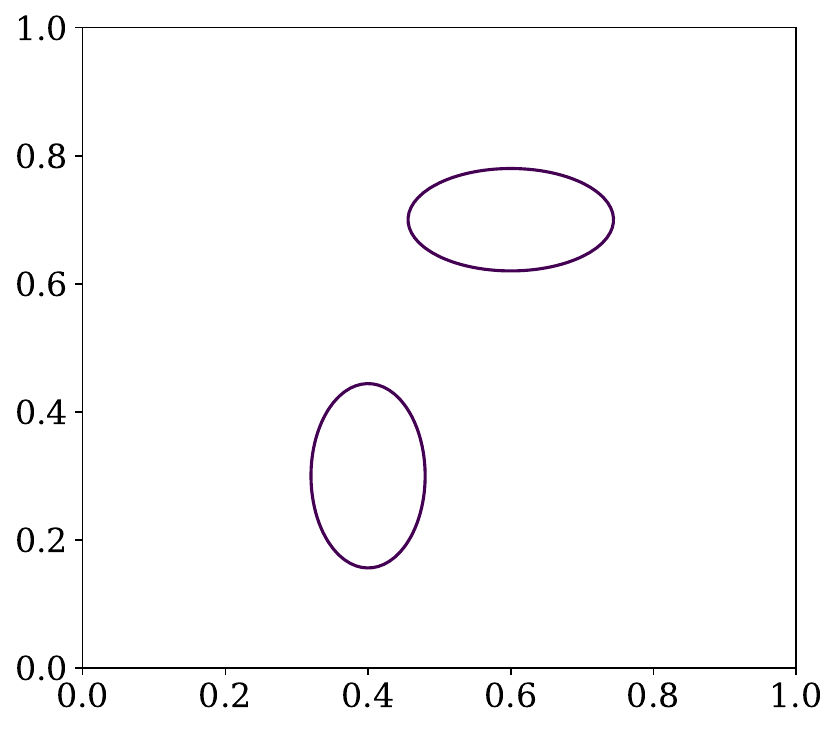}
        \caption{ground truth}
    \end{subfigure}%
~
    \begin{subfigure}[t]{0.334\textwidth} 
        \centering
        \includegraphics[height=1.99in,width=1.99in]{./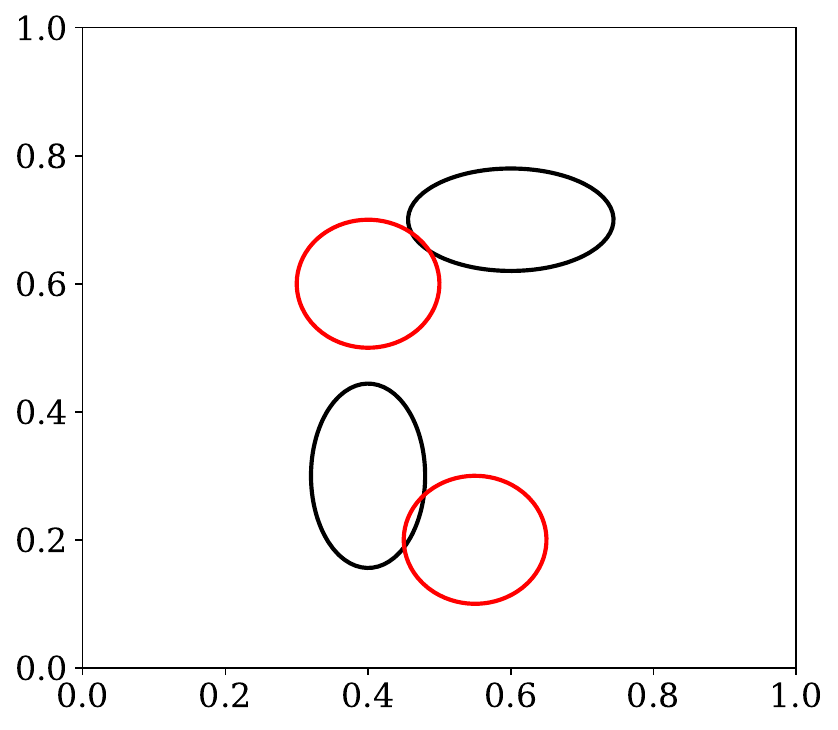}
        \caption{initialization (red)}
    \end{subfigure}%
  
    \begin{subfigure}[t]{0.334\textwidth} 
        \centering
        \includegraphics[height=1.99in,width=1.99in]{./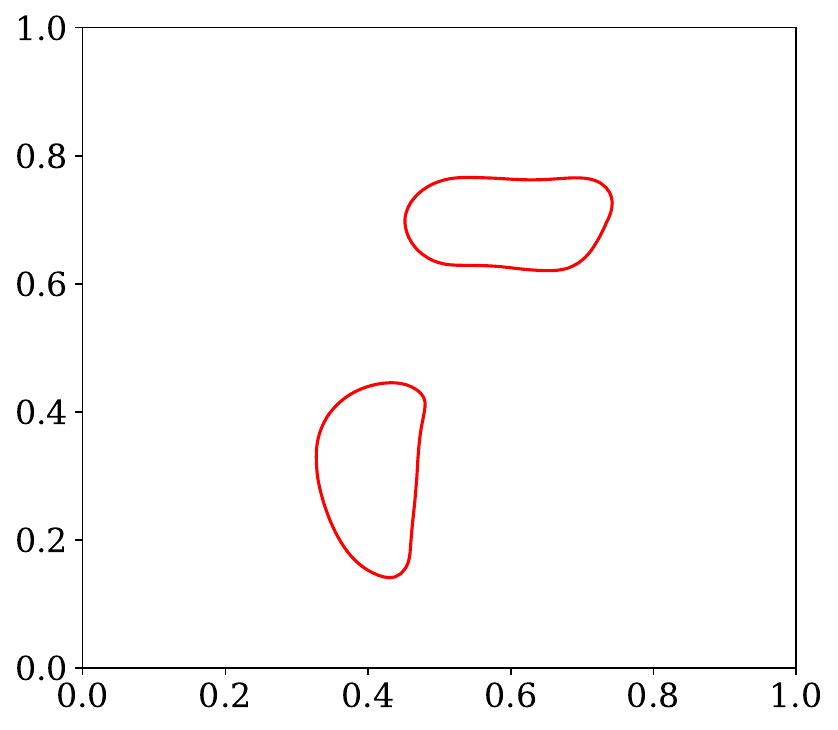}
        \caption{reconstruction}
    \end{subfigure}
    ~
    \begin{subfigure}[t]{0.334\textwidth} 
        \centering
        \includegraphics[height=1.99in,width=1.99in]{./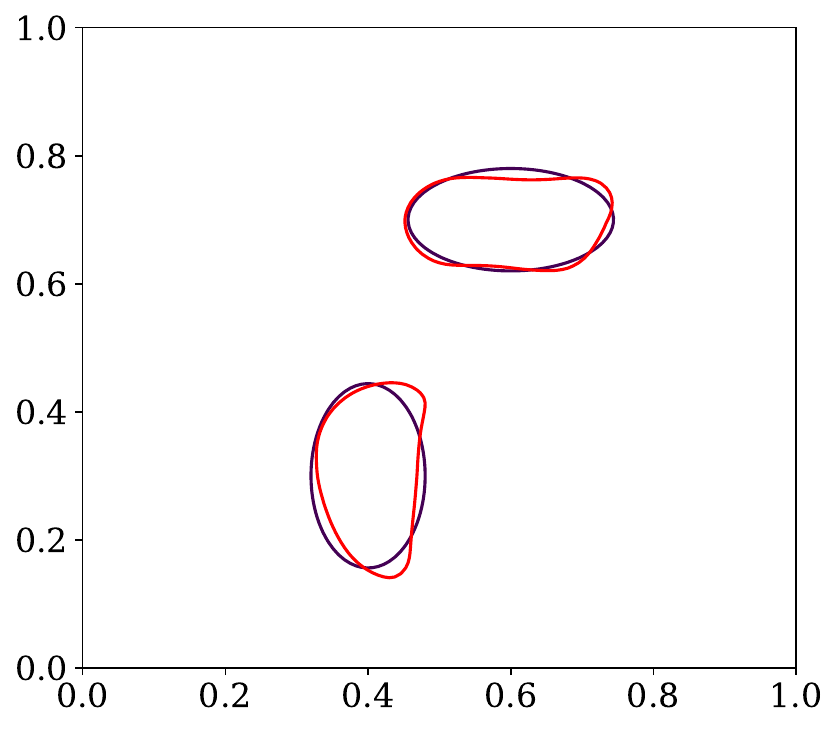}
        \caption{reconstruction (red) and ground truth (black)}
    \end{subfigure}
    \caption{Reconstruction of two ellipses using $I=3$ currents  and $K=70$ point measurements with $1.13\%$ noise.}\label{fig:2ellipses}
\end{figure}

\begin{figure}[ht]
    \centering
    \begin{subfigure}[t]{0.334\textwidth}
        \centering
\definecolor{preto}{RGB}{0,0,0}
\definecolor{vermelho}{RGB}{254,49,49}
\definecolor{azul_escuro}{RGB}{0,0,255}
\definecolor{cinza}{RGB}{195,195,195}
\definecolor{papel_amarelo}{RGB}{255, 249, 240}

\begin{tikzpicture}[scale=4.1]
\draw[line width=0.5mm, preto](0.4,1) -- (0.6,1);
\draw[line width=0.5mm, preto](0.4,0) -- (0.6,0);
	\draw[line width=0.7mm, vermelho](0,1) -- (0.4,1);
	\draw[line width=0.7mm, vermelho](0,0) -- (0,1);
	\draw[line width=0.7mm, vermelho] (0,0) -- (0.4,0);
    \draw[line width=0.7mm, vermelho](0.6,1) -- (1,1);
	\draw[line width=0.7mm, vermelho](1,0) -- (1,1);
	\draw[line width=0.7mm, vermelho] (0.6,0) -- (1,0);
	\draw[line width=0.9mm, preto](0.6,0.97) -- (0.6,1.03);
	\draw[line width=0.9mm, preto](0.4,0.97) -- (0.4,1.03);
	\draw[line width=0.9mm, preto](0.6,-0.03) -- (0.6,0.03);
	\draw[line width=0.9mm, preto](0.4,-0.03) -- (0.4,0.03);
	\draw[line width=0.3mm, preto] (0.6,0.7) ellipse (0.144cm and 0.08cm) node[anchor= center] {$\Om$};
\fill[vermelho, opacity=0.2] (0.6,0.7) ellipse (0.144cm and 0.08cm);
\draw[line width=0.3mm, preto] (0.4,0.3) ellipse (0.08cm and 0.144cm) node[anchor= center] {$\Om$};
\fill[vermelho, opacity=0.2] (0.4,0.3) ellipse (0.08cm and 0.144cm);
	\def\raio{0.015}
	\def\transparenc{0.7}	
	\def\h{0.2};

	\foreach \x in {0,\h,...,1.0}{
		 \draw[fill= preto, opacity=\transparenc] (0,\x) circle (\raio cm);
}
	\foreach \x in {0,\h,...,1.0}{
		 \draw[fill= preto, opacity=\transparenc] (1,\x) circle (\raio cm);
}
	\foreach \x in {\h}{
		 \draw[fill= preto, opacity=\transparenc] (\x,1) circle (\raio cm);
}
	\foreach \x in {4*\h}{
		 \draw[fill= preto, opacity=\transparenc] (\x,1) circle (\raio cm);
}
	\foreach \x in {\h}{
		 \draw[fill= preto, opacity=\transparenc ] (\x,0) circle (\raio cm);
}
	\foreach \x in {4*\h}{
		 \draw[fill= preto, opacity=\transparenc] (\x,0) circle (\raio cm);
}
	\draw (0.5, -0.08) node[anchor= center] {$\Gamma_0$};
	\draw (0.5, 1.08) node[anchor= center] {$\Gamma_0$};

\end{tikzpicture}
    \end{subfigure}%
    ~ 
\begin{subfigure}[t]{0.334\textwidth}
        \centering
\definecolor{preto}{RGB}{0,0,0}
\definecolor{vermelho}{RGB}{254,49,49}
\definecolor{azul_escuro}{RGB}{0,0,255}
\definecolor{cinza}{RGB}{195,195,195}
\definecolor{papel_amarelo}{RGB}{255, 249, 240}

\begin{tikzpicture}[scale=4.1]
\draw[line width=0.5mm, preto](0.4,1) -- (0.6,1);
\draw[line width=0.5mm, preto](0.4,0) -- (0.6,0);
	\draw[line width=0.7mm, vermelho](0,1) -- (0.4,1);
	\draw[line width=0.7mm, vermelho](0,0) -- (0,1);	
	\draw[line width=0.7mm, vermelho] (0,0) -- (0.4,0);
    \draw[line width=0.7mm, vermelho](0.6,1) -- (1,1);
	\draw[line width=0.7mm, vermelho](1,0) -- (1,1);
	\draw[line width=0.7mm, vermelho] (0.6,0) -- (1,0);
	\draw[line width=0.9mm, preto](0.6,0.97) -- (0.6,1.03);
	\draw[line width=0.9mm, preto](0.4,0.97) -- (0.4,1.03);
	\draw[line width=0.9mm, preto](0.6,-0.03) -- (0.6,0.03);
	\draw[line width=0.9mm, preto](0.4,-0.03) -- (0.4,0.03);
	\draw[line width=0.3mm, preto] (0.6,0.7) ellipse (0.144cm and 0.08cm) node[anchor= center] {$\Om$};
\fill[vermelho, opacity=0.2] (0.6,0.7) ellipse (0.144cm and 0.08cm);
\draw[line width=0.3mm, preto] (0.4,0.3) ellipse (0.08cm and 0.144cm) node[anchor= center] {$\Om$};
\fill[vermelho, opacity=0.2] (0.4,0.3) ellipse (0.08cm and 0.144cm);
	\def\raio{0.015}
	\def\transparenc{0.7}	
	\def\h{0.1};
	\foreach \x in {0,\h,...,1.1}{
		 \draw[fill= preto, opacity=\transparenc] (0,\x) circle (\raio cm);
}
	\foreach \x in {0,\h, ...,1.1}{
		 \draw[fill= preto, opacity=\transparenc] (1,\x) circle (\raio cm);
}
	\foreach \x in {0.0,\h , ..., 0.4}{
		 \draw[fill= preto, opacity=\transparenc] (\x,1) circle (\raio cm);
}
	\foreach \x in {0.7, 0.8, 0.9}{
		 \draw[fill= preto, opacity=\transparenc] (\x,1) circle (\raio cm);
}
	\foreach \x in {0.1, 0.2,0.3}{
		 \draw[fill= preto, opacity=\transparenc ] (\x,0) circle (\raio cm);
}
	\foreach \x in {0.7,0.8, 0.9}{
		 \draw[fill= preto, opacity=\transparenc] (\x,0) circle (\raio cm);
}
	\draw (0.5, -0.08) node[anchor= center] {$\Gamma_0$};
	\draw (0.5, 1.08) node[anchor= center] {$\Gamma_0$};
\end{tikzpicture}
    \end{subfigure}%
~
\begin{subfigure}[t]{0.334\textwidth}
        \centering
\definecolor{preto}{RGB}{0,0,0}
\definecolor{vermelho}{RGB}{254,49,49}
\definecolor{azul_escuro}{RGB}{0,0,255}
\definecolor{cinza}{RGB}{195,195,195}
\definecolor{papel_amarelo}{RGB}{255, 249, 240}

\begin{tikzpicture}[scale=4.1]
\draw[line width=0.5mm, preto](0.4,1) -- (0.6,1);
\draw[line width=0.5mm, preto](0.4,0) -- (0.6,0);
	\draw[line width=0.7mm, vermelho](0,1) -- (0.4,1);
	\draw[line width=0.7mm, vermelho](0,0) -- (0,1);	
	\draw[line width=0.7mm, vermelho] (0,0) -- (0.4,0);
    \draw[line width=0.7mm, vermelho](0.6,1) -- (1,1);
	\draw[line width=0.7mm, vermelho](1,0) -- (1,1);
	\draw[line width=0.7mm, vermelho] (0.6,0) -- (1,0);
	\draw[line width=0.9mm, preto](0.6,0.97) -- (0.6,1.03);
	\draw[line width=0.9mm, preto](0.4,0.97) -- (0.4,1.03);
	\draw[line width=0.9mm, preto](0.6,-0.03) -- (0.6,0.03);
	\draw[line width=0.9mm, preto](0.4,-0.03) -- (0.4,0.03);
	\draw[line width=0.3mm, preto] (0.6,0.7) ellipse (0.144cm and 0.08cm) node[anchor= center] {$\Om$};
\fill[vermelho, opacity=0.2] (0.6,0.7) ellipse (0.144cm and 0.08cm);
\draw[line width=0.3mm, preto] (0.4,0.3) ellipse (0.08cm and 0.144cm) node[anchor= center] {$\Om$};
\fill[vermelho, opacity=0.2] (0.4,0.3) ellipse (0.08cm and 0.144cm);
	\def\raio{0.015}
	\def\transparenc{0.7}	
	\def\h{0.05};
	\foreach \x in {0,\h,...,1.0}{
		 \draw[fill= preto, opacity=\transparenc] (0,\x) circle (\raio cm);
}
	\foreach \x in {0,\h, ...,1.0}{
		 \draw[fill= preto, opacity=\transparenc] (1,\x) circle (\raio cm);
}
	\foreach \x in {0.0,\h , ..., 0.4}{
		 \draw[fill= preto, opacity=\transparenc] (\x,1) circle (\raio cm);
}
	\foreach \x in {0.65, 0.7, ...,1.05}{
		 \draw[fill= preto, opacity=\transparenc] (\x,1) circle (\raio cm);
}
	\foreach \x in {0.0, 0.05, ...,0.4}{
		 \draw[fill= preto, opacity=\transparenc ] (\x,0) circle (\raio cm);
}
	\foreach \x in {0.65,0.7, ..., 1}{
		 \draw[fill= preto, opacity=\transparenc] (\x,0) circle (\raio cm);
}
	\draw (0.5, -0.08) node[anchor= center] {$\Gamma_0$};
	\draw (0.5, 1.08) node[anchor= center] {$\Gamma_0$};
\end{tikzpicture}
    \end{subfigure}
    
    \begin{subfigure}[t]{0.334\textwidth} 
        \centering
        \includegraphics[height=1.99in,width=1.99in]{./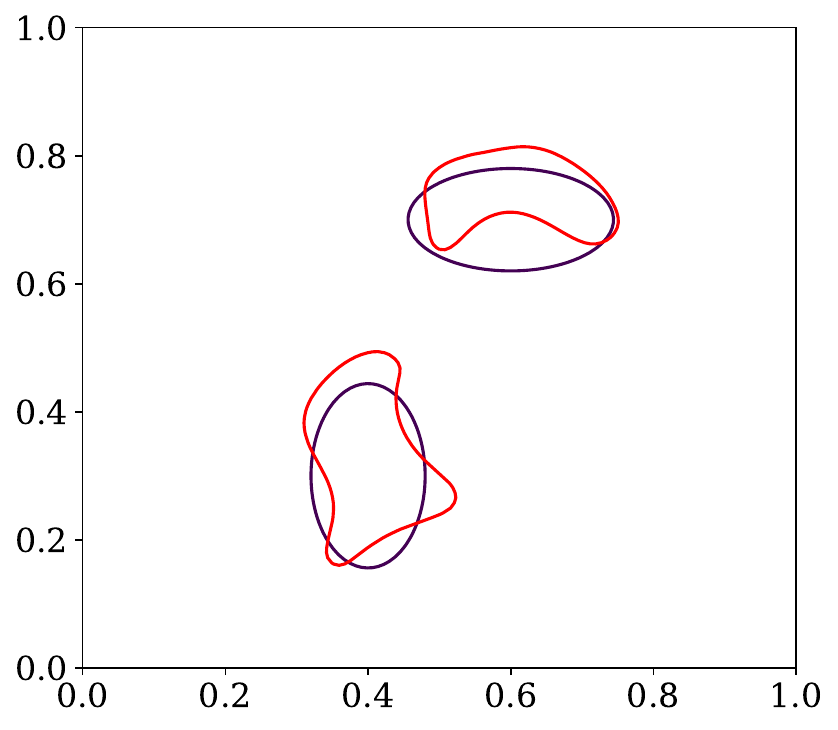}
        \caption{$K=16$ point measurements,\\ $1.18 \%$ noise, $54.00\%$ relative error}
    \end{subfigure}%
~
    \begin{subfigure}[t]{0.334\textwidth} 
        \centering
        \includegraphics[height=1.99in,width=1.99in]{./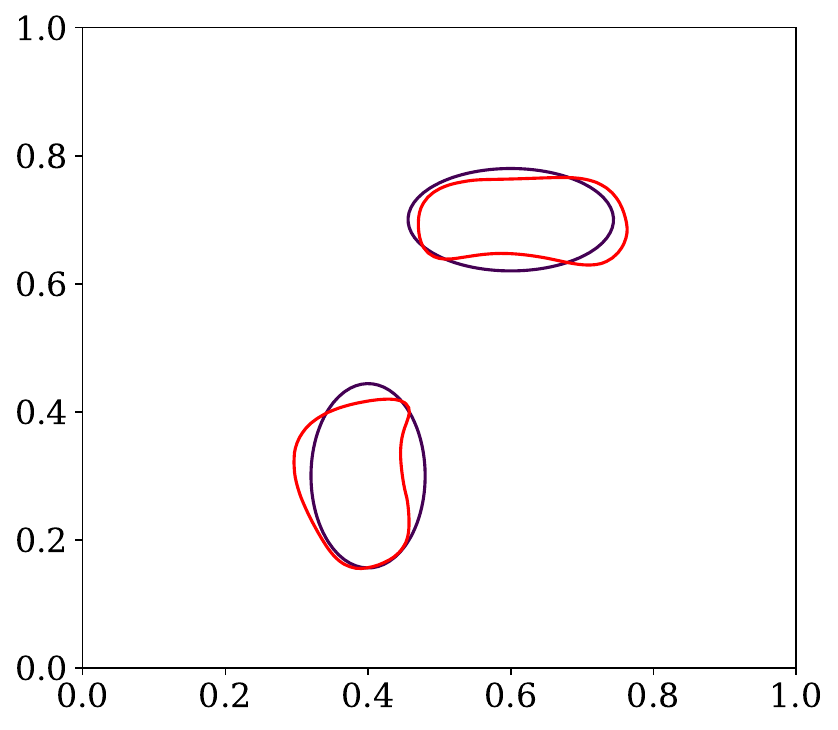}
        \caption{$K=34$ point measurements,\\ $1.18 \%$ noise, $27.13\%$ relative error}
    \end{subfigure}%
    ~
    \begin{subfigure}[t]{0.334\textwidth} 
        \centering
        \includegraphics[height=1.99in,width=1.99in]{./tests1/1-1-2-2/Difference.pdf}
        \caption{$K=70$ point measurements,\\ $1.13 \%$ noise, $17.19\%$ relative error}
    \end{subfigure}
    \caption{Reconstruction of two ellipses using $I=3$ currents  and three different sets of point measurements shown in the first row.}\label{2ellipses_nbpt}
\end{figure}

\begin{table}[ht]
  \centering
  \begin{tabular}{cccc}
    \hline
     noise & $K=16$ points &  $K=34$ points &  $K=70$ points \\ \hline
     $0\%$ 
     &
    \begin{minipage}{0.25\textwidth}
            \centering{\vspace{0.1cm}{\scriptsize error: $33.4\%$}}\\
      \includegraphics[width=1\textwidth, height = 1\textwidth]{./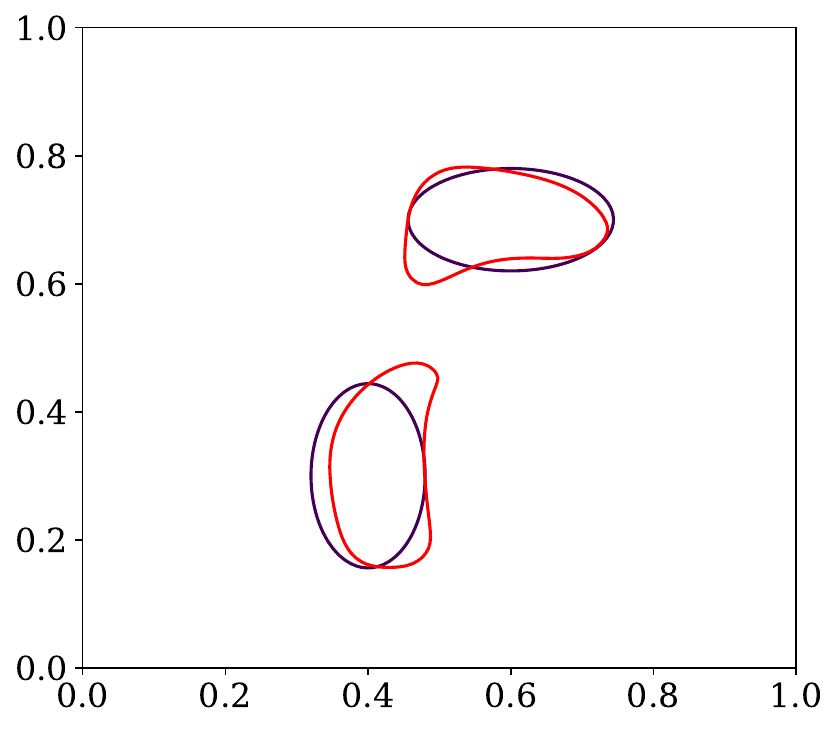}
      \end{minipage}
    &
    \begin{minipage}{0.25\textwidth}
                \centering{\vspace{0.1cm}{\scriptsize error: $18.8\%$}}\\
      \includegraphics[width=1\textwidth, height = 1\textwidth]{./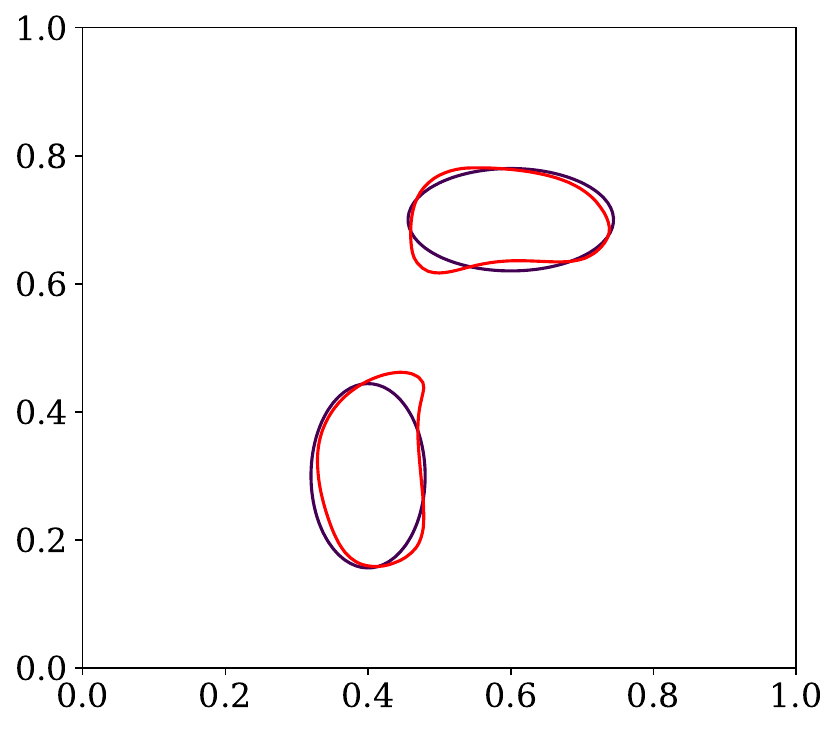}
    \end{minipage}
    & 
    \begin{minipage}{0.25\textwidth}
                \centering{\vspace{0.1cm}{\scriptsize error: $16.3\%$}}\\
      \includegraphics[width=1\textwidth, height = 1\textwidth]{./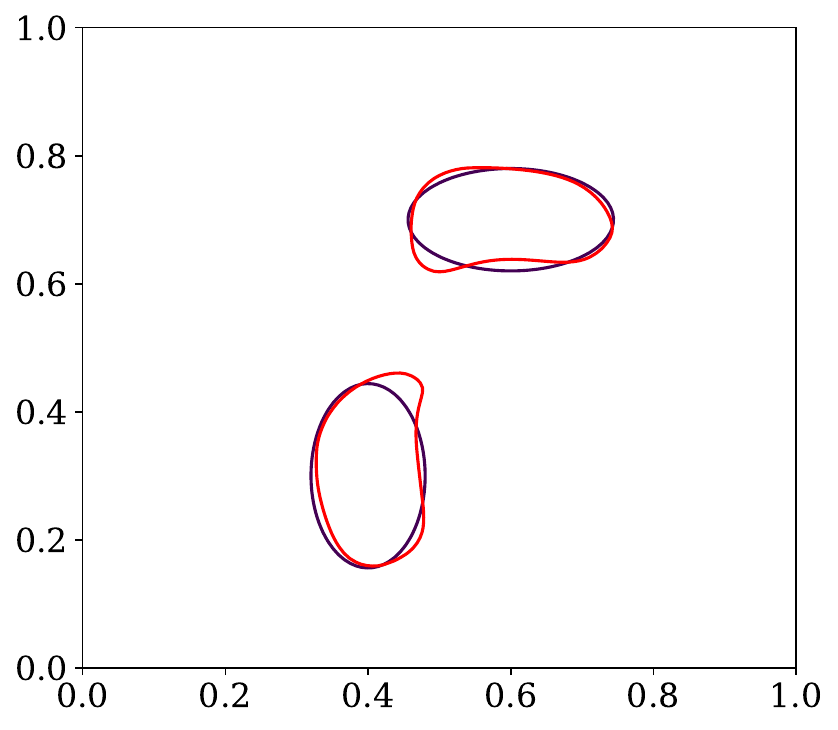}
    \end{minipage}
    \\ \hline   
    $0.51\%$
    &
    \begin{minipage}{0.25\textwidth}
                \centering{\vspace{0.1cm}{\scriptsize error: $29.7\%$}}\\
      \includegraphics[width=1\textwidth, height = 1\textwidth]{./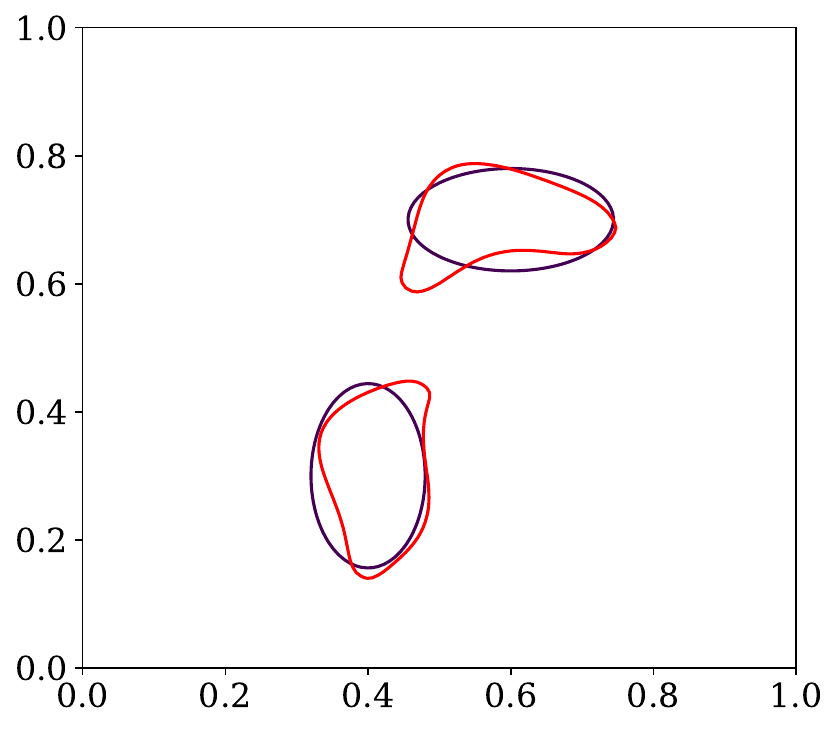}
    \end{minipage}
    &
    \begin{minipage}{0.25\textwidth}
                \centering{\vspace{0.1cm}{\scriptsize error: $17.5\%$}}\\
      \includegraphics[width=1\textwidth, height = 1\textwidth]{./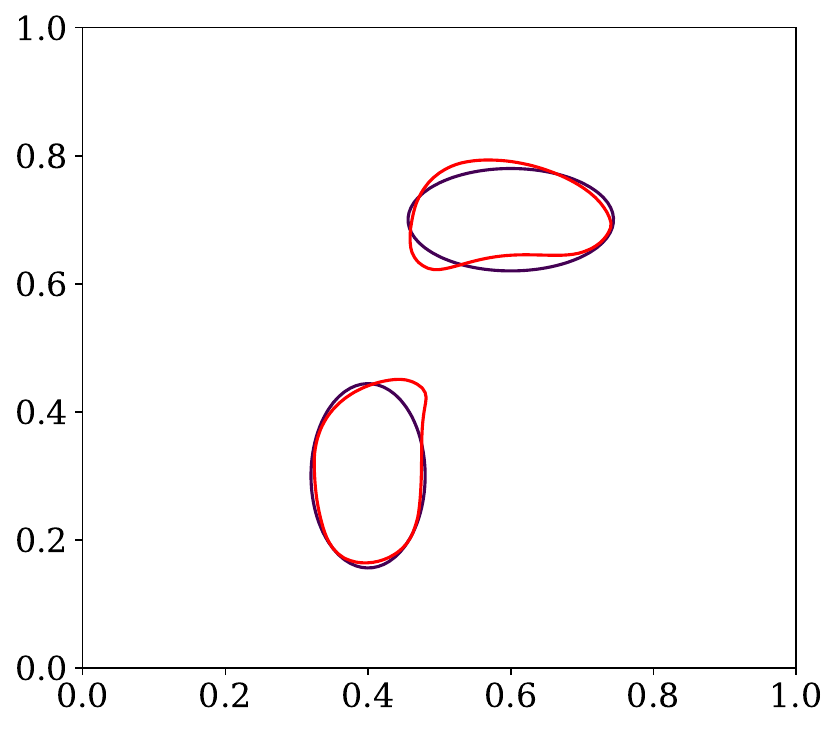}
    \end{minipage}
    & 
    \begin{minipage}{0.25\textwidth}
                \centering{\vspace{0.1cm}{\scriptsize error: $19.8\%$}}\\
      \includegraphics[width=1\textwidth, height = 1\textwidth]{./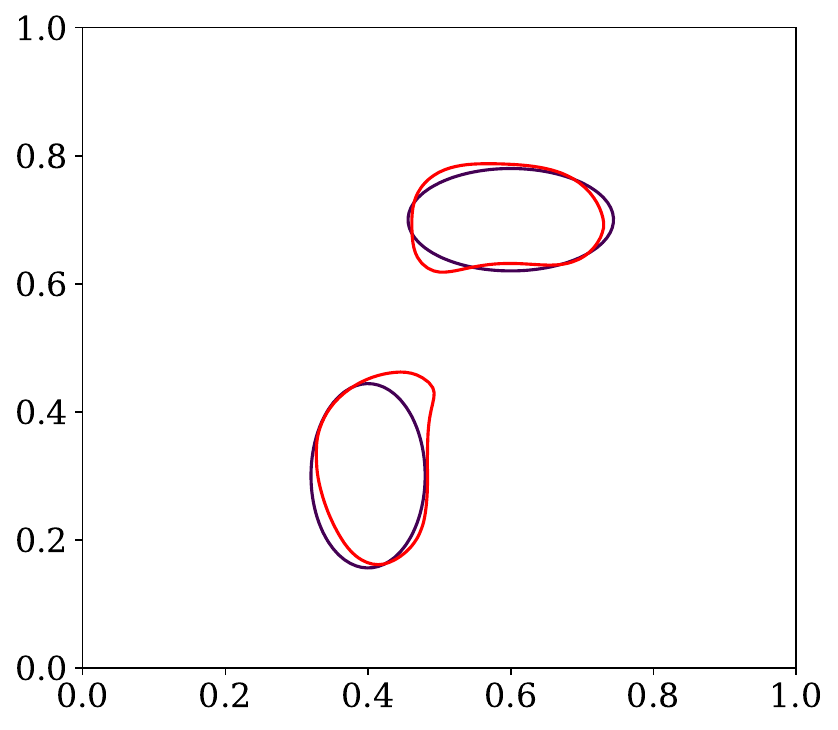}
    \end{minipage}
    \\ \hline  
    $1.16\%$
    &
    \begin{minipage}{0.25\textwidth}
                \centering{\vspace{0.1cm}{\scriptsize error: $54.0\%$}}\\
      \includegraphics[width=1\textwidth, height = 1\textwidth]{./tests1/1-1-0-2/Difference.pdf}
    \end{minipage}
    &
    \begin{minipage}{0.25\textwidth}
                \centering{\vspace{0.1cm}{\scriptsize error: $27.1\%$}}\\
      \includegraphics[width=1\textwidth, height = 1\textwidth]{./tests1/1-1-1-2/Difference.pdf}
    \end{minipage}
    & 
    \begin{minipage}{0.25\textwidth}
                \centering{\vspace{0.1cm}{\scriptsize error: $17.2\%$}}\\
      \includegraphics[width=1\textwidth, height = 1\textwidth]{./tests1/1-1-2-2/Difference.pdf}
    \end{minipage}
  \end{tabular}
  \caption{Influence of noise and number of point measurements on the reconstruction of two ellipses using $I=3$ currents (the noise value is the average over the noise values for the three levels of point measurements).}\label{fig:noise_influence_2ellipses}
\end{table}

\begin{figure}[ht]
\centering
\begin{subfigure}[t]{0.334\textwidth}
    \centering

\begin{tikzpicture}[scale=4.1]
	\draw[line width=0.5mm, preto](0.4,1) -- (0.6,1);
        \draw[line width=0.5mm, preto](0.4,0) -- (0.6,0);
	\draw[line width=0.7mm, vermelho](0,1) -- (0.4,1);
	\draw[line width=0.7mm, vermelho](0,0) -- (0,1);	
	\draw[line width=0.7mm, vermelho] (0,0) -- (0.4,0);
    \draw[line width=0.7mm, vermelho](0.6,1) -- (1,1);
	\draw[line width=0.7mm, vermelho](1,0) -- (1,1);
	\draw[line width=0.7mm, vermelho] (0.6,0) -- (1,0);
	\draw[line width=0.9mm, preto](0.6,0.97) -- (0.6,1.03);
	\draw[line width=0.9mm, preto](0.4,0.97) -- (0.4,1.03);
	\draw[line width=0.9mm, preto](0.6,-0.03) -- (0.6,0.03);
	\draw[line width=0.9mm, preto](0.4,-0.03) -- (0.4,0.03);
	\def\raio{0.015}
	\def\transparenc{0.7}	
	\def\h{0.2};

	\foreach \x in {0,\h,...,1.0}{
		 \draw[fill= preto, opacity=\transparenc] (0,\x) circle (\raio cm);
}
	\foreach \x in {0,\h,...,1.0}{
		 \draw[fill= preto, opacity=\transparenc] (1,\x) circle (\raio cm);
}
	\foreach \x in {\h}{
		 \draw[fill= preto, opacity=\transparenc] (\x,1) circle (\raio cm);
}
	\foreach \x in {4*\h}{
		 \draw[fill= preto, opacity=\transparenc] (\x,1) circle (\raio cm);
}
	\foreach \x in {\h}{
		 \draw[fill= preto, opacity=\transparenc ] (\x,0) circle (\raio cm);
}
	\foreach \x in {4*\h}{
		 \draw[fill= preto, opacity=\transparenc] (\x,0) circle (\raio cm);
}
	\draw (0.5, -0.08) node[anchor= center] {$\Gamma_0$};
	\draw (0.5, 1.08) node[anchor= center] {$\Gamma_0$};

\begin{scope}[shift={(-0.145,-0.06)} ,scale=0.303]
      \path[draw=black, line join=round, line cap=butt, miter limit=10.00, line width=0.3mm, opacity=0.85] 
        (1.520565,0.584200) -- (1.493806,0.587955) -- (1.467047,0.594101)
        -- (1.440288,0.603245) -- (1.428873,0.608925) -- (1.413529,0.619704) --
        (1.400350,0.632931) -- (1.384588,0.656938) -- (1.376503,0.680944) --
        (1.371107,0.704950) -- (1.367595,0.728956) -- (1.364009,0.776969) --
        (1.362501,0.848988) -- (1.362278,1.089050) -- (1.362915,1.377125) --
        (1.364525,1.425138) -- (1.368951,1.473150) -- (1.373207,1.497156) --
        (1.379672,1.521163) -- (1.381099,1.545169) -- (1.380360,1.641194) --
        (1.380713,2.097313) -- (1.381099,2.121319) -- (1.379672,2.145325) --
        (1.373207,2.169331) -- (1.368951,2.193338) -- (1.364525,2.241350) --
        (1.362915,2.289363) -- (1.362285,2.409394) -- (1.363203,2.865513) --
        (1.365372,2.913525) -- (1.367595,2.937531) -- (1.371107,2.961538) --
        (1.376503,2.985544) -- (1.386770,3.013969) -- (1.400350,3.033556) --
        (1.413529,3.046783) -- (1.428873,3.057563) -- (1.440288,3.063243) --
        (1.467047,3.072386) -- (1.493806,3.078533) -- (1.520565,3.082287) --
        (1.574083,3.085293) -- (1.654360,3.086593) -- (1.948708,3.086798) --
        (2.269816,3.086079) -- (2.323334,3.084420) -- (2.361765,3.081569) --
        (2.376852,3.079484) -- (2.403611,3.073823) -- (2.457129,3.057562) --
        (3.126104,2.457406) -- (3.125238,2.433400) -- (2.671184,2.025294) --
        (2.510646,1.881256) -- (2.495429,1.857250) -- (2.495429,1.809237) --
        (2.510647,1.785231) -- (3.125238,1.233087) -- (3.126104,1.209081) --
        (2.457129,0.608925) -- (2.403611,0.592664) -- (2.361765,0.584919) --
        (2.323334,0.582067) -- (2.243057,0.580057) -- (2.055744,0.579690) --
        (1.600842,0.580507) -- (1.547324,0.582345) -- (1.520565,0.584200) --
        (1.520565,0.584200) -- cycle;
        
              \fill[vermelho, opacity=0.2]  
        (1.520565,0.584200) -- (1.493806,0.587955) -- (1.467047,0.594101)
        -- (1.440288,0.603245) -- (1.428873,0.608925) -- (1.413529,0.619704) --
        (1.400350,0.632931) -- (1.384588,0.656938) -- (1.376503,0.680944) --
        (1.371107,0.704950) -- (1.367595,0.728956) -- (1.364009,0.776969) --
        (1.362501,0.848988) -- (1.362278,1.089050) -- (1.362915,1.377125) --
        (1.364525,1.425138) -- (1.368951,1.473150) -- (1.373207,1.497156) --
        (1.379672,1.521163) -- (1.381099,1.545169) -- (1.380360,1.641194) --
        (1.380713,2.097313) -- (1.381099,2.121319) -- (1.379672,2.145325) --
        (1.373207,2.169331) -- (1.368951,2.193338) -- (1.364525,2.241350) --
        (1.362915,2.289363) -- (1.362285,2.409394) -- (1.363203,2.865513) --
        (1.365372,2.913525) -- (1.367595,2.937531) -- (1.371107,2.961538) --
        (1.376503,2.985544) -- (1.386770,3.013969) -- (1.400350,3.033556) --
        (1.413529,3.046783) -- (1.428873,3.057563) -- (1.440288,3.063243) --
        (1.467047,3.072386) -- (1.493806,3.078533) -- (1.520565,3.082287) --
        (1.574083,3.085293) -- (1.654360,3.086593) -- (1.948708,3.086798) --
        (2.269816,3.086079) -- (2.323334,3.084420) -- (2.361765,3.081569) --
        (2.376852,3.079484) -- (2.403611,3.073823) -- (2.457129,3.057562) --
        (3.126104,2.457406) -- (3.125238,2.433400) -- (2.671184,2.025294) --
        (2.510646,1.881256) -- (2.495429,1.857250) -- (2.495429,1.809237) --
        (2.510647,1.785231) -- (3.125238,1.233087) -- (3.126104,1.209081) --
        (2.457129,0.608925) -- (2.403611,0.592664) -- (2.361765,0.584919) --
        (2.323334,0.582067) -- (2.243057,0.580057) -- (2.055744,0.579690) --
        (1.600842,0.580507) -- (1.547324,0.582345) -- (1.520565,0.584200) --
        (1.520565,0.584200) -- cycle;
\end{scope}
\draw[line width=0.9mm, preto] (0.5,0.5) node[anchor= center] {$\Omega$};

\end{tikzpicture}
    \caption{point measurements pattern}
\end{subfigure}%
~    
    \begin{subfigure}[t]{0.334\textwidth} 
        \centering
        \includegraphics[height=1.99in,width=1.99in]{./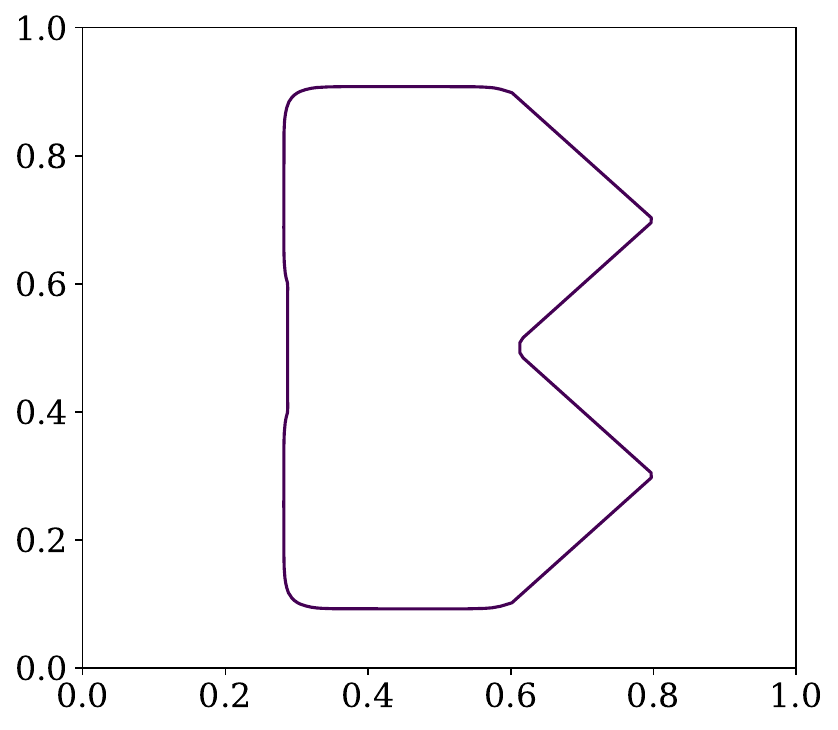}
        \caption{ground truth}
    \end{subfigure}%
~
    \begin{subfigure}[t]{0.334\textwidth} 
        \centering
        \includegraphics[height=1.99in,width=1.99in]{./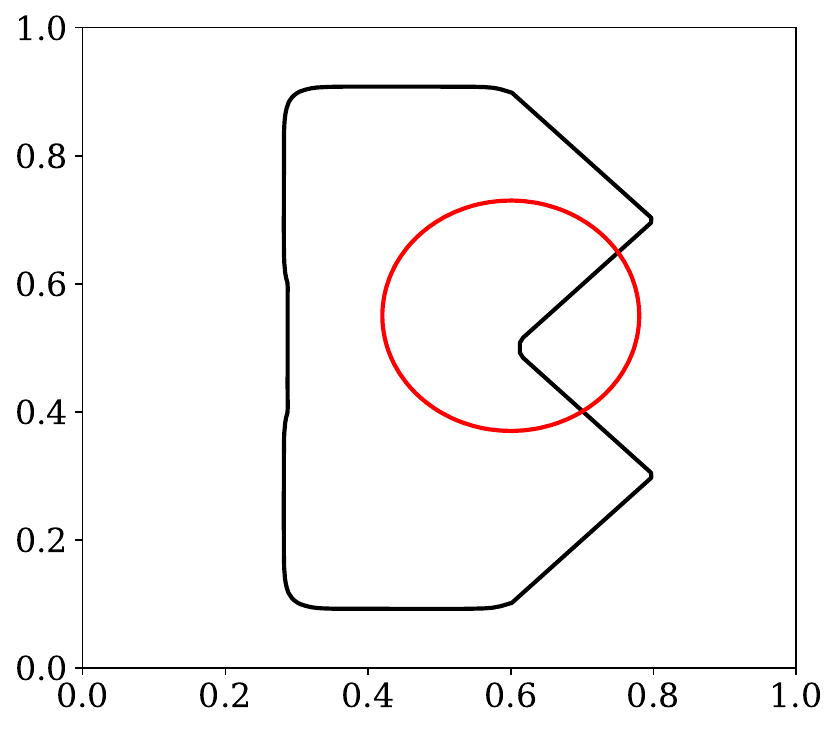}
        \caption{initialization (red)}
    \end{subfigure}%
  
    \begin{subfigure}[t]{0.334\textwidth} 
        \centering
        \includegraphics[height=1.99in,width=1.99in]{./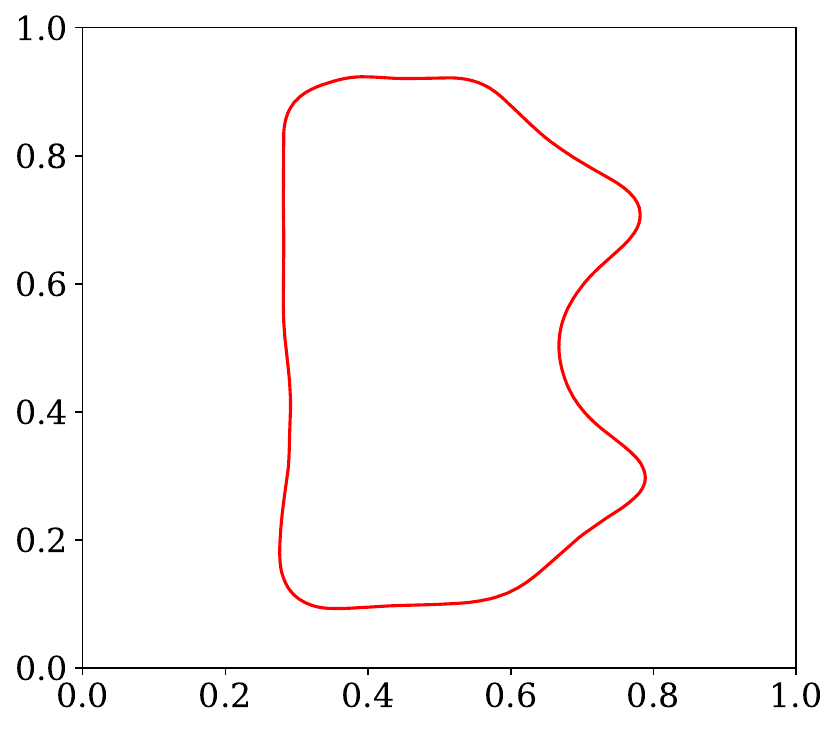}
        \caption{reconstruction}
    \end{subfigure}
    ~
    \begin{subfigure}[t]{0.334\textwidth} 
        \centering
        \includegraphics[height=1.99in,width=1.99in]{./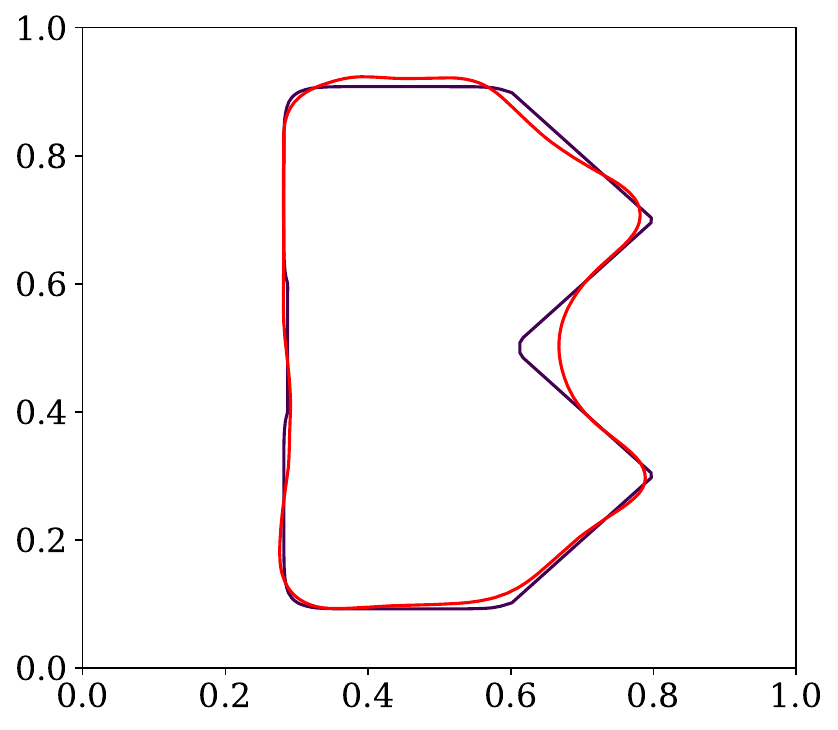}
        \caption{reconstruction (red) and ground truth (black)}
    \end{subfigure}
    \caption{Reconstruction of a concave shape using $I=3$ currents  and $K=34$ point measurements with $0.55\%$ noise.}\label{fig:concave}
\end{figure}
\begin{figure}[ht]
    \centering
    \begin{subfigure}[t]{0.334\textwidth}
        \centering

\begin{tikzpicture}[scale=4.1]
	\draw[line width=0.5mm, preto](0.4,1) -- (0.6,1);
        \draw[line width=0.5mm, preto](0.4,0) -- (0.6,0);
	\draw[line width=0.7mm, vermelho](0,1) -- (0.4,1);
	\draw[line width=0.7mm, vermelho](0,0) -- (0,1);	
	\draw[line width=0.7mm, vermelho] (0,0) -- (0.4,0);
    \draw[line width=0.7mm, vermelho](0.6,1) -- (1,1);
	\draw[line width=0.7mm, vermelho](1,0) -- (1,1);
	\draw[line width=0.7mm, vermelho] (0.6,0) -- (1,0);
	\draw[line width=0.9mm, preto](0.6,0.97) -- (0.6,1.03);
	\draw[line width=0.9mm, preto](0.4,0.97) -- (0.4,1.03);
	\draw[line width=0.9mm, preto](0.6,-0.03) -- (0.6,0.03);
	\draw[line width=0.9mm, preto](0.4,-0.03) -- (0.4,0.03);
	\def\raio{0.015}
	\def\transparenc{0.7}	
	\def\h{0.2};

	\foreach \x in {0,\h,...,1.0}{
		 \draw[fill= preto, opacity=\transparenc] (0,\x) circle (\raio cm);
}
	\foreach \x in {0,\h,...,1.0}{
		 \draw[fill= preto, opacity=\transparenc] (1,\x) circle (\raio cm);
}
	\foreach \x in {\h}{
		 \draw[fill= preto, opacity=\transparenc] (\x,1) circle (\raio cm);
}
	\foreach \x in {4*\h}{
		 \draw[fill= preto, opacity=\transparenc] (\x,1) circle (\raio cm);
}
	\foreach \x in {\h}{
		 \draw[fill= preto, opacity=\transparenc ] (\x,0) circle (\raio cm);
}
	\foreach \x in {4*\h}{
		 \draw[fill= preto, opacity=\transparenc] (\x,0) circle (\raio cm);
}
	\draw (0.5, -0.08) node[anchor= center] {$\Gamma_0$};
	\draw (0.5, 1.08) node[anchor= center] {$\Gamma_0$};

\begin{scope}[shift={(-0.145,-0.06)} ,scale=0.303]
      \path[draw=black, line join=round, line cap=butt, miter limit=10.00, line width=0.3mm, opacity=0.85] 
        (1.520565,0.584200) -- (1.493806,0.587955) -- (1.467047,0.594101)
        -- (1.440288,0.603245) -- (1.428873,0.608925) -- (1.413529,0.619704) --
        (1.400350,0.632931) -- (1.384588,0.656938) -- (1.376503,0.680944) --
        (1.371107,0.704950) -- (1.367595,0.728956) -- (1.364009,0.776969) --
        (1.362501,0.848988) -- (1.362278,1.089050) -- (1.362915,1.377125) --
        (1.364525,1.425138) -- (1.368951,1.473150) -- (1.373207,1.497156) --
        (1.379672,1.521163) -- (1.381099,1.545169) -- (1.380360,1.641194) --
        (1.380713,2.097313) -- (1.381099,2.121319) -- (1.379672,2.145325) --
        (1.373207,2.169331) -- (1.368951,2.193338) -- (1.364525,2.241350) --
        (1.362915,2.289363) -- (1.362285,2.409394) -- (1.363203,2.865513) --
        (1.365372,2.913525) -- (1.367595,2.937531) -- (1.371107,2.961538) --
        (1.376503,2.985544) -- (1.386770,3.013969) -- (1.400350,3.033556) --
        (1.413529,3.046783) -- (1.428873,3.057563) -- (1.440288,3.063243) --
        (1.467047,3.072386) -- (1.493806,3.078533) -- (1.520565,3.082287) --
        (1.574083,3.085293) -- (1.654360,3.086593) -- (1.948708,3.086798) --
        (2.269816,3.086079) -- (2.323334,3.084420) -- (2.361765,3.081569) --
        (2.376852,3.079484) -- (2.403611,3.073823) -- (2.457129,3.057562) --
        (3.126104,2.457406) -- (3.125238,2.433400) -- (2.671184,2.025294) --
        (2.510646,1.881256) -- (2.495429,1.857250) -- (2.495429,1.809237) --
        (2.510647,1.785231) -- (3.125238,1.233087) -- (3.126104,1.209081) --
        (2.457129,0.608925) -- (2.403611,0.592664) -- (2.361765,0.584919) --
        (2.323334,0.582067) -- (2.243057,0.580057) -- (2.055744,0.579690) --
        (1.600842,0.580507) -- (1.547324,0.582345) -- (1.520565,0.584200) --
        (1.520565,0.584200) -- cycle;
        
              \fill[vermelho, opacity=0.2]  
        (1.520565,0.584200) -- (1.493806,0.587955) -- (1.467047,0.594101)
        -- (1.440288,0.603245) -- (1.428873,0.608925) -- (1.413529,0.619704) --
        (1.400350,0.632931) -- (1.384588,0.656938) -- (1.376503,0.680944) --
        (1.371107,0.704950) -- (1.367595,0.728956) -- (1.364009,0.776969) --
        (1.362501,0.848988) -- (1.362278,1.089050) -- (1.362915,1.377125) --
        (1.364525,1.425138) -- (1.368951,1.473150) -- (1.373207,1.497156) --
        (1.379672,1.521163) -- (1.381099,1.545169) -- (1.380360,1.641194) --
        (1.380713,2.097313) -- (1.381099,2.121319) -- (1.379672,2.145325) --
        (1.373207,2.169331) -- (1.368951,2.193338) -- (1.364525,2.241350) --
        (1.362915,2.289363) -- (1.362285,2.409394) -- (1.363203,2.865513) --
        (1.365372,2.913525) -- (1.367595,2.937531) -- (1.371107,2.961538) --
        (1.376503,2.985544) -- (1.386770,3.013969) -- (1.400350,3.033556) --
        (1.413529,3.046783) -- (1.428873,3.057563) -- (1.440288,3.063243) --
        (1.467047,3.072386) -- (1.493806,3.078533) -- (1.520565,3.082287) --
        (1.574083,3.085293) -- (1.654360,3.086593) -- (1.948708,3.086798) --
        (2.269816,3.086079) -- (2.323334,3.084420) -- (2.361765,3.081569) --
        (2.376852,3.079484) -- (2.403611,3.073823) -- (2.457129,3.057562) --
        (3.126104,2.457406) -- (3.125238,2.433400) -- (2.671184,2.025294) --
        (2.510646,1.881256) -- (2.495429,1.857250) -- (2.495429,1.809237) --
        (2.510647,1.785231) -- (3.125238,1.233087) -- (3.126104,1.209081) --
        (2.457129,0.608925) -- (2.403611,0.592664) -- (2.361765,0.584919) --
        (2.323334,0.582067) -- (2.243057,0.580057) -- (2.055744,0.579690) --
        (1.600842,0.580507) -- (1.547324,0.582345) -- (1.520565,0.584200) --
        (1.520565,0.584200) -- cycle;
\end{scope}
\draw[line width=0.9mm, preto] (0.5,0.5) node[anchor= center] {$\Omega$};

\end{tikzpicture}
    \end{subfigure}%
    ~ 
\begin{subfigure}[t]{0.334\textwidth}
        \centering

\begin{tikzpicture}[scale=4.1]
	\draw[line width=0.5mm, preto](0.4,1) -- (0.6,1);
        \draw[line width=0.5mm, preto](0.4,0) -- (0.6,0);
	\draw[line width=0.7mm, vermelho](0,1) -- (0.4,1);
	\draw[line width=0.7mm, vermelho](0,0) -- (0,1);	
	\draw[line width=0.7mm, vermelho] (0,0) -- (0.4,0);
    \draw[line width=0.7mm, vermelho](0.6,1) -- (1,1);
	\draw[line width=0.7mm, vermelho](1,0) -- (1,1);
	\draw[line width=0.7mm, vermelho] (0.6,0) -- (1,0);
	\draw[line width=0.9mm, preto](0.6,0.97) -- (0.6,1.03);
	\draw[line width=0.9mm, preto](0.4,0.97) -- (0.4,1.03);
	\draw[line width=0.9mm, preto](0.6,-0.03) -- (0.6,0.03);
	\draw[line width=0.9mm, preto](0.4,-0.03) -- (0.4,0.03);
	
	\def\raio{0.015}
	\def\transparenc{0.7}	
	\def\h{0.1};
	\foreach \x in {0,\h,...,1.1}{
		 \draw[fill= preto, opacity=\transparenc] (0,\x) circle (\raio cm);
}
	\foreach \x in {0,\h, ...,1.1}{
		 \draw[fill= preto, opacity=\transparenc] (1,\x) circle (\raio cm);
}
	\foreach \x in {0.0,\h , ..., 0.4}{
		 \draw[fill= preto, opacity=\transparenc] (\x,1) circle (\raio cm);
}
	\foreach \x in {0.7, 0.8, 0.9}{
		 \draw[fill= preto, opacity=\transparenc] (\x,1) circle (\raio cm);
}
	\foreach \x in {0.1, 0.2,0.3}{
		 \draw[fill= preto, opacity=\transparenc ] (\x,0) circle (\raio cm);
}
	\foreach \x in {0.7,0.8, 0.9}{
		 \draw[fill= preto, opacity=\transparenc] (\x,0) circle (\raio cm);
}
	\draw (0.5, -0.08) node[anchor= center] {$\Gamma_0$};
	\draw (0.5, 1.08) node[anchor= center] {$\Gamma_0$};

\begin{scope}[shift={(-0.145,-0.06)} ,scale=0.303]
      \path[draw=black, line join=round, line cap=butt, miter limit=10.00, line width=0.3mm, opacity=0.85] 
        (1.520565,0.584200) -- (1.493806,0.587955) -- (1.467047,0.594101)
        -- (1.440288,0.603245) -- (1.428873,0.608925) -- (1.413529,0.619704) --
        (1.400350,0.632931) -- (1.384588,0.656938) -- (1.376503,0.680944) --
        (1.371107,0.704950) -- (1.367595,0.728956) -- (1.364009,0.776969) --
        (1.362501,0.848988) -- (1.362278,1.089050) -- (1.362915,1.377125) --
        (1.364525,1.425138) -- (1.368951,1.473150) -- (1.373207,1.497156) --
        (1.379672,1.521163) -- (1.381099,1.545169) -- (1.380360,1.641194) --
        (1.380713,2.097313) -- (1.381099,2.121319) -- (1.379672,2.145325) --
        (1.373207,2.169331) -- (1.368951,2.193338) -- (1.364525,2.241350) --
        (1.362915,2.289363) -- (1.362285,2.409394) -- (1.363203,2.865513) --
        (1.365372,2.913525) -- (1.367595,2.937531) -- (1.371107,2.961538) --
        (1.376503,2.985544) -- (1.386770,3.013969) -- (1.400350,3.033556) --
        (1.413529,3.046783) -- (1.428873,3.057563) -- (1.440288,3.063243) --
        (1.467047,3.072386) -- (1.493806,3.078533) -- (1.520565,3.082287) --
        (1.574083,3.085293) -- (1.654360,3.086593) -- (1.948708,3.086798) --
        (2.269816,3.086079) -- (2.323334,3.084420) -- (2.361765,3.081569) --
        (2.376852,3.079484) -- (2.403611,3.073823) -- (2.457129,3.057562) --
        (3.126104,2.457406) -- (3.125238,2.433400) -- (2.671184,2.025294) --
        (2.510646,1.881256) -- (2.495429,1.857250) -- (2.495429,1.809237) --
        (2.510647,1.785231) -- (3.125238,1.233087) -- (3.126104,1.209081) --
        (2.457129,0.608925) -- (2.403611,0.592664) -- (2.361765,0.584919) --
        (2.323334,0.582067) -- (2.243057,0.580057) -- (2.055744,0.579690) --
        (1.600842,0.580507) -- (1.547324,0.582345) -- (1.520565,0.584200) --
        (1.520565,0.584200) -- cycle;
        
              \fill[vermelho, opacity=0.2]  
        (1.520565,0.584200) -- (1.493806,0.587955) -- (1.467047,0.594101)
        -- (1.440288,0.603245) -- (1.428873,0.608925) -- (1.413529,0.619704) --
        (1.400350,0.632931) -- (1.384588,0.656938) -- (1.376503,0.680944) --
        (1.371107,0.704950) -- (1.367595,0.728956) -- (1.364009,0.776969) --
        (1.362501,0.848988) -- (1.362278,1.089050) -- (1.362915,1.377125) --
        (1.364525,1.425138) -- (1.368951,1.473150) -- (1.373207,1.497156) --
        (1.379672,1.521163) -- (1.381099,1.545169) -- (1.380360,1.641194) --
        (1.380713,2.097313) -- (1.381099,2.121319) -- (1.379672,2.145325) --
        (1.373207,2.169331) -- (1.368951,2.193338) -- (1.364525,2.241350) --
        (1.362915,2.289363) -- (1.362285,2.409394) -- (1.363203,2.865513) --
        (1.365372,2.913525) -- (1.367595,2.937531) -- (1.371107,2.961538) --
        (1.376503,2.985544) -- (1.386770,3.013969) -- (1.400350,3.033556) --
        (1.413529,3.046783) -- (1.428873,3.057563) -- (1.440288,3.063243) --
        (1.467047,3.072386) -- (1.493806,3.078533) -- (1.520565,3.082287) --
        (1.574083,3.085293) -- (1.654360,3.086593) -- (1.948708,3.086798) --
        (2.269816,3.086079) -- (2.323334,3.084420) -- (2.361765,3.081569) --
        (2.376852,3.079484) -- (2.403611,3.073823) -- (2.457129,3.057562) --
        (3.126104,2.457406) -- (3.125238,2.433400) -- (2.671184,2.025294) --
        (2.510646,1.881256) -- (2.495429,1.857250) -- (2.495429,1.809237) --
        (2.510647,1.785231) -- (3.125238,1.233087) -- (3.126104,1.209081) --
        (2.457129,0.608925) -- (2.403611,0.592664) -- (2.361765,0.584919) --
        (2.323334,0.582067) -- (2.243057,0.580057) -- (2.055744,0.579690) --
        (1.600842,0.580507) -- (1.547324,0.582345) -- (1.520565,0.584200) --
        (1.520565,0.584200) -- cycle;
\end{scope}
\draw[line width=0.9mm, preto] (0.5,0.5) node[anchor= center] {$\Omega$};

\end{tikzpicture}
    \end{subfigure}%
~
\begin{subfigure}[t]{0.334\textwidth}
        \centering

\begin{tikzpicture}[scale=4.1]
	\draw[line width=0.5mm, preto](0.4,1) -- (0.6,1);
        \draw[line width=0.5mm, preto](0.4,0) -- (0.6,0);
	\draw[line width=0.7mm, vermelho](0,1) -- (0.4,1);
	\draw[line width=0.7mm, vermelho](0,0) -- (0,1);	
	\draw[line width=0.7mm, vermelho] (0,0) -- (0.4,0);
    \draw[line width=0.7mm, vermelho](0.6,1) -- (1,1);
	\draw[line width=0.7mm, vermelho](1,0) -- (1,1);
	\draw[line width=0.7mm, vermelho] (0.6,0) -- (1,0);
	\draw[line width=0.9mm, preto](0.6,0.97) -- (0.6,1.03);
	\draw[line width=0.9mm, preto](0.4,0.97) -- (0.4,1.03);
	\draw[line width=0.9mm, preto](0.6,-0.03) -- (0.6,0.03);
	\draw[line width=0.9mm, preto](0.4,-0.03) -- (0.4,0.03);
	\def\raio{0.015}
	\def\transparenc{0.7}	
	\def\h{0.05};
	\foreach \x in {0,\h,...,1.0}{
		 \draw[fill= preto, opacity=\transparenc] (0,\x) circle (\raio cm);
}
	\foreach \x in {0,\h, ...,1.0}{
		 \draw[fill= preto, opacity=\transparenc] (1,\x) circle (\raio cm);
}
	\foreach \x in {0.0,\h , ..., 0.4}{
		 \draw[fill= preto, opacity=\transparenc] (\x,1) circle (\raio cm);
}
	\foreach \x in {0.65, 0.7, ...,1.05}{
		 \draw[fill= preto, opacity=\transparenc] (\x,1) circle (\raio cm);
}
	\foreach \x in {0.0, 0.05, ...,0.4}{
		 \draw[fill= preto, opacity=\transparenc ] (\x,0) circle (\raio cm);
}
	\foreach \x in {0.65,0.7, ..., 1}{
		 \draw[fill= preto, opacity=\transparenc] (\x,0) circle (\raio cm);
}
	\draw (0.5, -0.08) node[anchor= center] {$\Gamma_0$};
	\draw (0.5, 1.08) node[anchor= center] {$\Gamma_0$};

\begin{scope}[shift={(-0.145,-0.06)} ,scale=0.303]
      \path[draw=black, line join=round, line cap=butt, miter limit=10.00, line width=0.3mm, opacity=0.85] 
        (1.520565,0.584200) -- (1.493806,0.587955) -- (1.467047,0.594101)
        -- (1.440288,0.603245) -- (1.428873,0.608925) -- (1.413529,0.619704) --
        (1.400350,0.632931) -- (1.384588,0.656938) -- (1.376503,0.680944) --
        (1.371107,0.704950) -- (1.367595,0.728956) -- (1.364009,0.776969) --
        (1.362501,0.848988) -- (1.362278,1.089050) -- (1.362915,1.377125) --
        (1.364525,1.425138) -- (1.368951,1.473150) -- (1.373207,1.497156) --
        (1.379672,1.521163) -- (1.381099,1.545169) -- (1.380360,1.641194) --
        (1.380713,2.097313) -- (1.381099,2.121319) -- (1.379672,2.145325) --
        (1.373207,2.169331) -- (1.368951,2.193338) -- (1.364525,2.241350) --
        (1.362915,2.289363) -- (1.362285,2.409394) -- (1.363203,2.865513) --
        (1.365372,2.913525) -- (1.367595,2.937531) -- (1.371107,2.961538) --
        (1.376503,2.985544) -- (1.386770,3.013969) -- (1.400350,3.033556) --
        (1.413529,3.046783) -- (1.428873,3.057563) -- (1.440288,3.063243) --
        (1.467047,3.072386) -- (1.493806,3.078533) -- (1.520565,3.082287) --
        (1.574083,3.085293) -- (1.654360,3.086593) -- (1.948708,3.086798) --
        (2.269816,3.086079) -- (2.323334,3.084420) -- (2.361765,3.081569) --
        (2.376852,3.079484) -- (2.403611,3.073823) -- (2.457129,3.057562) --
        (3.126104,2.457406) -- (3.125238,2.433400) -- (2.671184,2.025294) --
        (2.510646,1.881256) -- (2.495429,1.857250) -- (2.495429,1.809237) --
        (2.510647,1.785231) -- (3.125238,1.233087) -- (3.126104,1.209081) --
        (2.457129,0.608925) -- (2.403611,0.592664) -- (2.361765,0.584919) --
        (2.323334,0.582067) -- (2.243057,0.580057) -- (2.055744,0.579690) --
        (1.600842,0.580507) -- (1.547324,0.582345) -- (1.520565,0.584200) --
        (1.520565,0.584200) -- cycle;
        
              \fill[vermelho, opacity=0.2]  
        (1.520565,0.584200) -- (1.493806,0.587955) -- (1.467047,0.594101)
        -- (1.440288,0.603245) -- (1.428873,0.608925) -- (1.413529,0.619704) --
        (1.400350,0.632931) -- (1.384588,0.656938) -- (1.376503,0.680944) --
        (1.371107,0.704950) -- (1.367595,0.728956) -- (1.364009,0.776969) --
        (1.362501,0.848988) -- (1.362278,1.089050) -- (1.362915,1.377125) --
        (1.364525,1.425138) -- (1.368951,1.473150) -- (1.373207,1.497156) --
        (1.379672,1.521163) -- (1.381099,1.545169) -- (1.380360,1.641194) --
        (1.380713,2.097313) -- (1.381099,2.121319) -- (1.379672,2.145325) --
        (1.373207,2.169331) -- (1.368951,2.193338) -- (1.364525,2.241350) --
        (1.362915,2.289363) -- (1.362285,2.409394) -- (1.363203,2.865513) --
        (1.365372,2.913525) -- (1.367595,2.937531) -- (1.371107,2.961538) --
        (1.376503,2.985544) -- (1.386770,3.013969) -- (1.400350,3.033556) --
        (1.413529,3.046783) -- (1.428873,3.057563) -- (1.440288,3.063243) --
        (1.467047,3.072386) -- (1.493806,3.078533) -- (1.520565,3.082287) --
        (1.574083,3.085293) -- (1.654360,3.086593) -- (1.948708,3.086798) --
        (2.269816,3.086079) -- (2.323334,3.084420) -- (2.361765,3.081569) --
        (2.376852,3.079484) -- (2.403611,3.073823) -- (2.457129,3.057562) --
        (3.126104,2.457406) -- (3.125238,2.433400) -- (2.671184,2.025294) --
        (2.510646,1.881256) -- (2.495429,1.857250) -- (2.495429,1.809237) --
        (2.510647,1.785231) -- (3.125238,1.233087) -- (3.126104,1.209081) --
        (2.457129,0.608925) -- (2.403611,0.592664) -- (2.361765,0.584919) --
        (2.323334,0.582067) -- (2.243057,0.580057) -- (2.055744,0.579690) --
        (1.600842,0.580507) -- (1.547324,0.582345) -- (1.520565,0.584200) --
        (1.520565,0.584200) -- cycle;
\end{scope}
\draw[line width=0.9mm, preto] (0.5,0.5) node[anchor= center] {$\Omega$};
\end{tikzpicture}
    \end{subfigure}
    
    \begin{subfigure}[t]{0.334\textwidth} 
        \centering
        \includegraphics[height=1.99in,width=1.99in]{./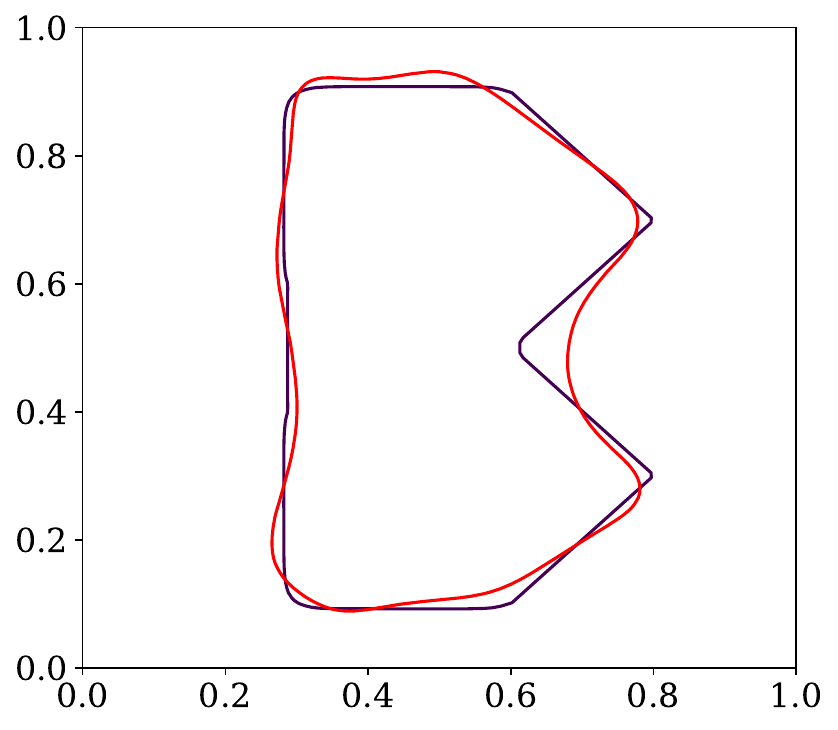}
        \caption{$K=16$ point measurements,\\ $1.03 \%$ noise, $7.53\%$ relative error}
    \end{subfigure}%
~
    \begin{subfigure}[t]{0.334\textwidth} 
        \centering
        \includegraphics[height=1.99in,width=1.99in]{./tests2/3-1-1-1/Difference.pdf}
        \caption{$K=34$ point measurements,\\ $0.73 \%$ noise, $4.77\%$ relative error}
    \end{subfigure}%
    ~
    \begin{subfigure}[t]{0.334\textwidth} 
        \centering
        \includegraphics[height=1.99in,width=1.99in]{./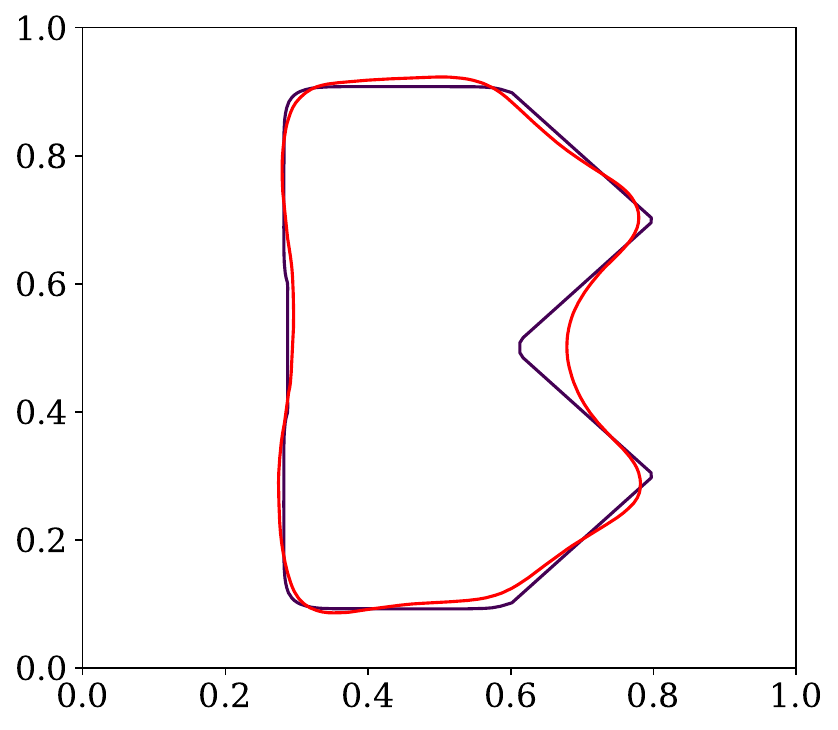}
        \caption{$K=70$ point measurements,\\ $0.71 \%$ noise, $5.37\%$ relative error}
    \end{subfigure}
    \caption{Reconstruction of a concave shape using $I=3$ currents  and three different sets of point measurements shown in the first row.}\label{concave_nbpt}
\end{figure}

\begin{table}[ht]
  \centering
  \begin{tabular}{cccc}
    \hline
     noise & $K=16$ points &  $K=34$ points &  $K=70$ points \\ \hline
     $0\%$ 
     &
    \begin{minipage}{0.25\textwidth}
            \centering{\vspace{0.1cm}{\scriptsize error: $8.40\%$}}\\
      \includegraphics[width=1\textwidth, height = 1\textwidth]{./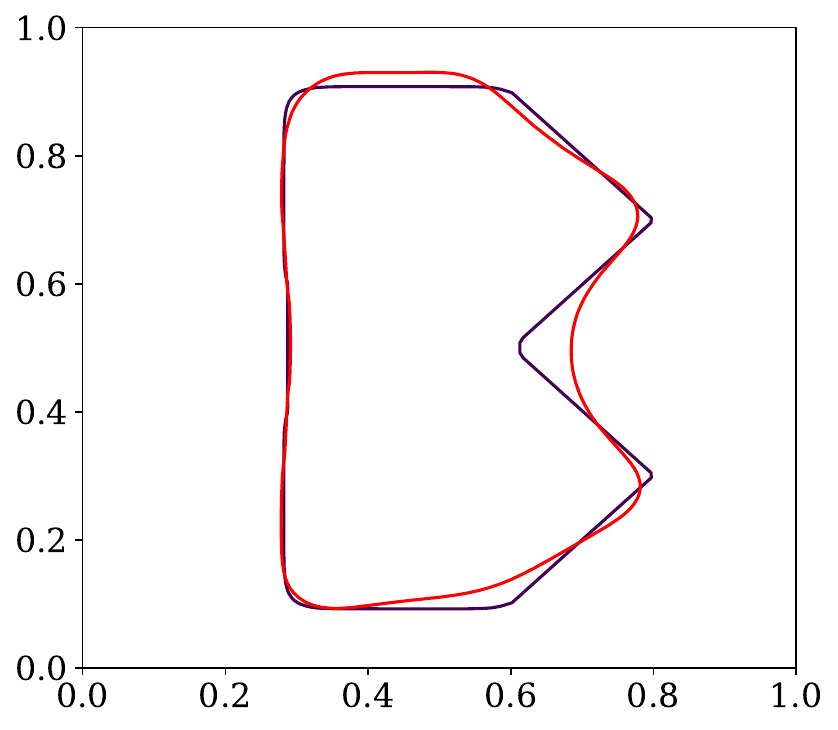}
      \end{minipage}
    &
    \begin{minipage}{0.25\textwidth}
                \centering{\vspace{0.1cm}{\scriptsize error: $5.84\%$}}\\
      \includegraphics[width=1\textwidth, height = 1\textwidth]{./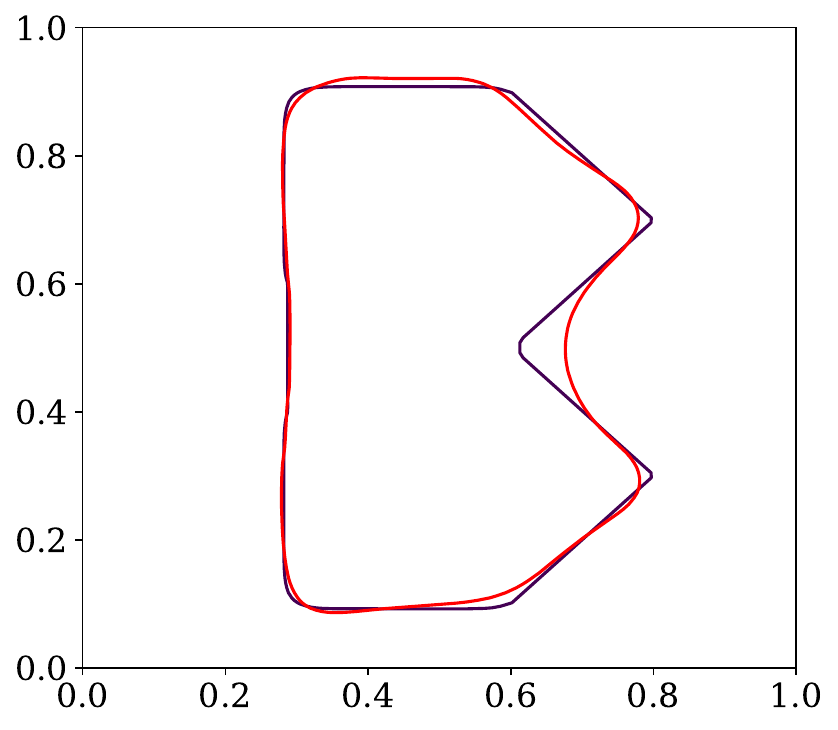}
    \end{minipage}
    & 
    \begin{minipage}{0.25\textwidth}
                \centering{\vspace{0.1cm}{\scriptsize error: $4.75\%$}}\\
      \includegraphics[width=1\textwidth, height = 1\textwidth]{./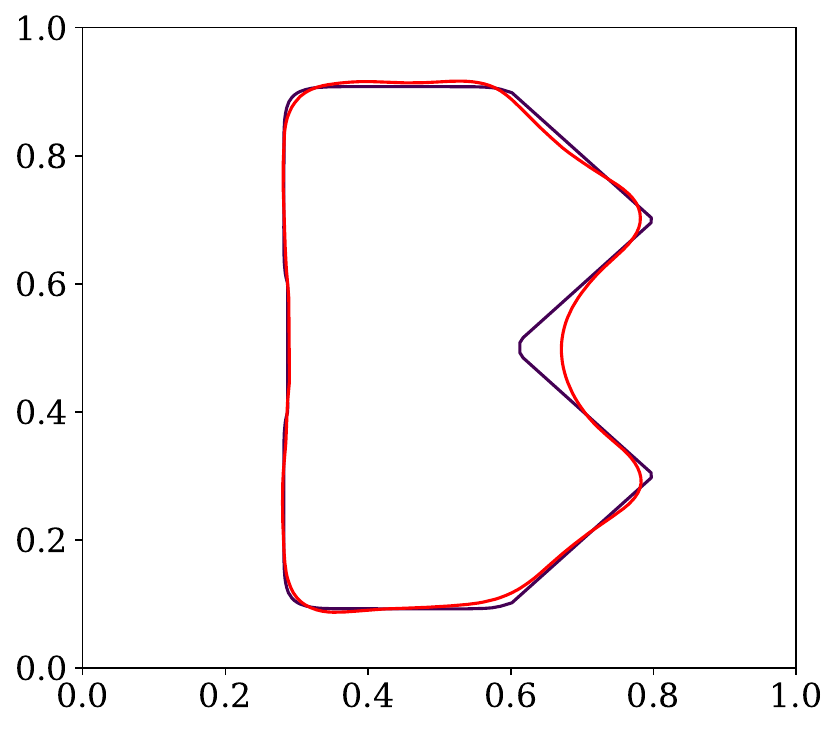}
    \end{minipage}
    \\ \hline   
    $0.54\%$
    &
    \begin{minipage}{0.25\textwidth}
                \centering{\vspace{0.1cm}{\scriptsize error: $8.81\%$}}\\
      \includegraphics[width=1\textwidth, height = 1\textwidth]{./tests2/3-1-0-1/Difference.pdf}
    \end{minipage}
    &
    \begin{minipage}{0.25\textwidth}
                \centering{\vspace{0.1cm}{\scriptsize error: $5.28\%$}}\\
      \includegraphics[width=1\textwidth, height = 1\textwidth]{./tests2/3-1-1-1/Difference.pdf}
    \end{minipage}
    & 
    \begin{minipage}{0.25\textwidth}
                \centering{\vspace{0.1cm}{\scriptsize error: $6.77\%$}}\\
      \includegraphics[width=1\textwidth, height = 1\textwidth]{./tests2/3-1-2-1/Difference.pdf}
    \end{minipage}
    \\ \hline  
    $1.12\%$
    &
    \begin{minipage}{0.25\textwidth}
                \centering{\vspace{0.1cm}{\scriptsize error: $13.90\%$}}\\
      \includegraphics[width=1\textwidth, height = 1\textwidth]{./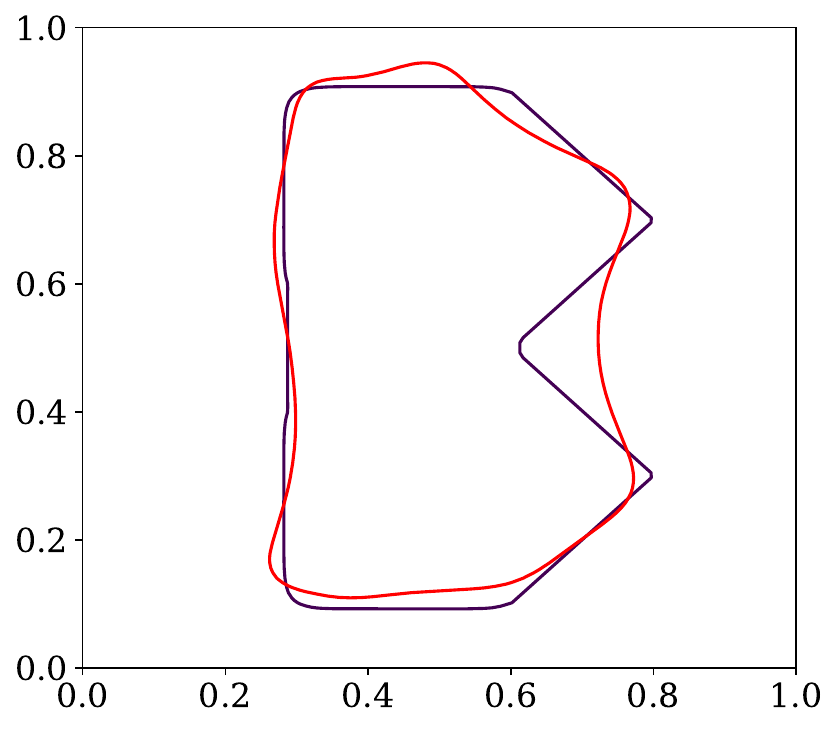}
    \end{minipage}
    &
    \begin{minipage}{0.25\textwidth}
                \centering{\vspace{0.1cm}{\scriptsize error: $8.28\%$}}\\
      \includegraphics[width=1\textwidth, height = 1\textwidth]{./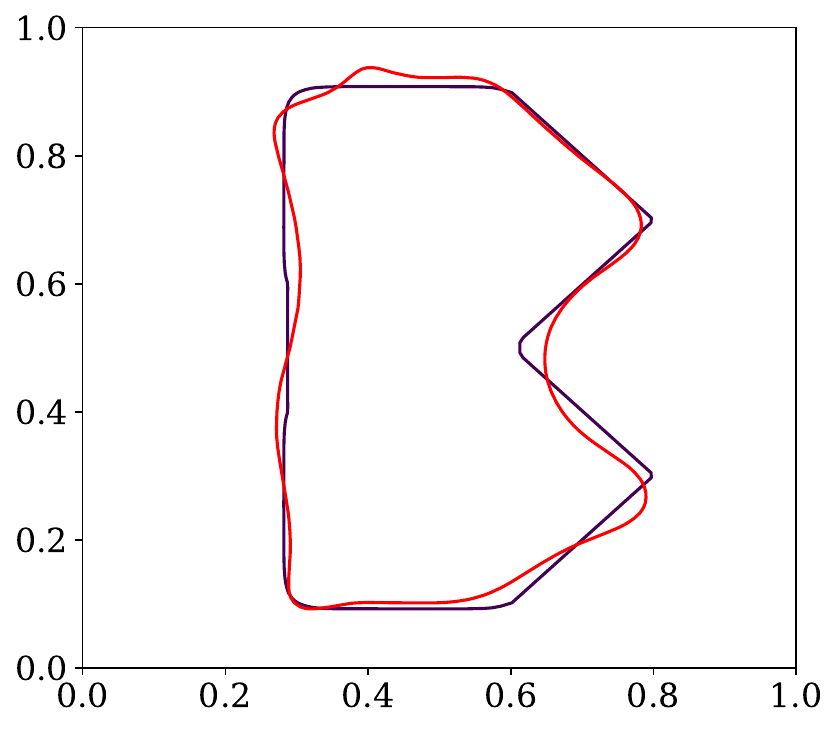}
    \end{minipage}
    & 
    \begin{minipage}{0.25\textwidth}
                \centering{\vspace{0.1cm}{\scriptsize error: $10.03\%$}}\\
      \includegraphics[width=1\textwidth, height = 1\textwidth]{./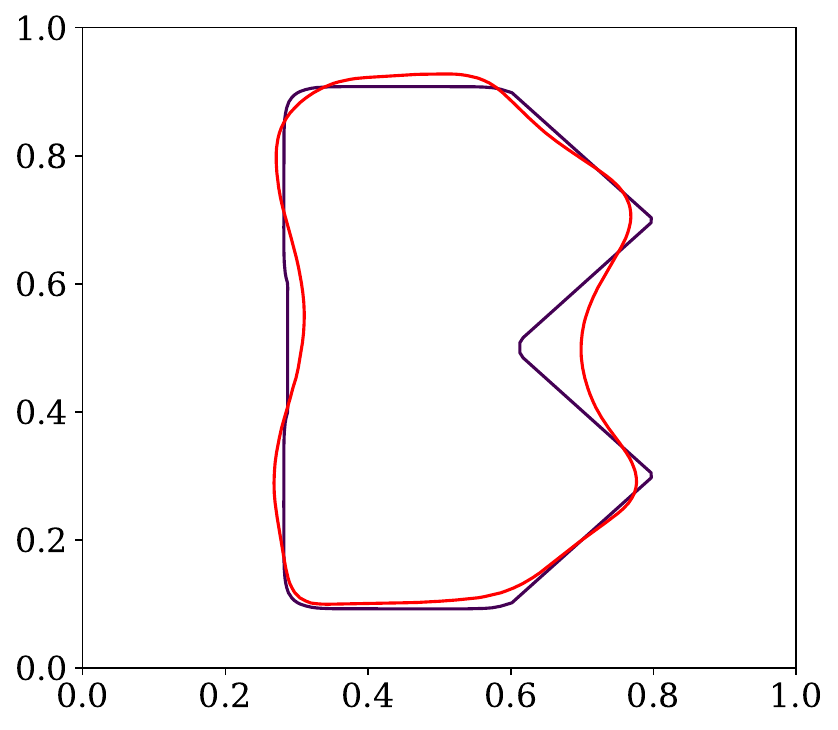}
    \end{minipage}
  \end{tabular}
  \caption{Influence of noise and number of point measurements on the reconstruction of a concave shape using $I=3$ currents (the noise value is the average over the noise values for the three levels of point measurements). }\label{fig:noise_influence_concave}
\end{table}

\begin{figure}[ht]
\centering
\begin{subfigure}[t]{0.334\textwidth}
    \centering

\begin{tikzpicture}[scale=4.1]
\draw[line width=0.5mm, preto](0.4,1) -- (0.6,1);
\draw[line width=0.5mm, preto](0.4,0) -- (0.6,0);
	\draw[line width=0.7mm, vermelho](0,1) -- (0.4,1);
	\draw[line width=0.7mm, vermelho](0,0) -- (0,1);	
	\draw[line width=0.7mm, vermelho] (0,0) -- (0.4,0);
    \draw[line width=0.7mm, vermelho](0.6,1) -- (1,1);
	\draw[line width=0.7mm, vermelho](1,0) -- (1,1);
	\draw[line width=0.7mm, vermelho] (0.6,0) -- (1,0);
	\draw[line width=0.9mm, preto](0.6,0.97) -- (0.6,1.03);
	\draw[line width=0.9mm, preto](0.4,0.97) -- (0.4,1.03);
	\draw[line width=0.9mm, preto](0.6,-0.03) -- (0.6,0.03);
	\draw[line width=0.9mm, preto](0.4,-0.03) -- (0.4,0.03);
	\draw[line width=0.3mm, preto] (0.6,0.7) ellipse (0.144cm and 0.08cm) node[anchor= center] {$\Om$};
\fill[vermelho, opacity=0.2] (0.6,0.7) ellipse (0.144cm and 0.08cm);
\draw[line width=0.3mm, preto] (0.4,0.3) ellipse (0.08cm and 0.144cm) node[anchor= center] {$\Om$};
\fill[vermelho, opacity=0.2] (0.4,0.3) ellipse (0.08cm and 0.144cm);
\draw[line width=0.3mm, preto] (0.2,0.65) ellipse (0.08cm and 0.08cm) node[anchor= center] {$\Om$};
\fill[vermelho, opacity=0.2] (0.2,0.65) ellipse (0.08cm and 0.08cm);	
	\def\raio{0.015}
	\def\transparenc{0.7}	
	\def\h{0.05};
	\foreach \x in {0,\h,...,1.0}{
		 \draw[fill= preto, opacity=\transparenc] (0,\x) circle (\raio cm);
}
	\foreach \x in {0,\h, ...,1.0}{
		 \draw[fill= preto, opacity=\transparenc] (1,\x) circle (\raio cm);
}
	\foreach \x in {0.0,\h , ..., 0.4}{
		 \draw[fill= preto, opacity=\transparenc] (\x,1) circle (\raio cm);
}
	\foreach \x in {0.65, 0.7, ...,1.05}{
		 \draw[fill= preto, opacity=\transparenc] (\x,1) circle (\raio cm);
}
	\foreach \x in {0.0, 0.05, ...,0.4}{
		 \draw[fill= preto, opacity=\transparenc ] (\x,0) circle (\raio cm);
}
	\foreach \x in {0.65,0.7, ..., 1}{
		 \draw[fill= preto, opacity=\transparenc] (\x,0) circle (\raio cm);
}
	\draw (0.5, -0.08) node[anchor= center] {$\Gamma_0$};
	\draw (0.5, 1.08) node[anchor= center] {$\Gamma_0$};
\end{tikzpicture}
    \caption{point measurements pattern}
\end{subfigure}%
~    
    \begin{subfigure}[t]{0.334\textwidth} 
        \centering
        \includegraphics[height=1.99in,width=1.99in]{./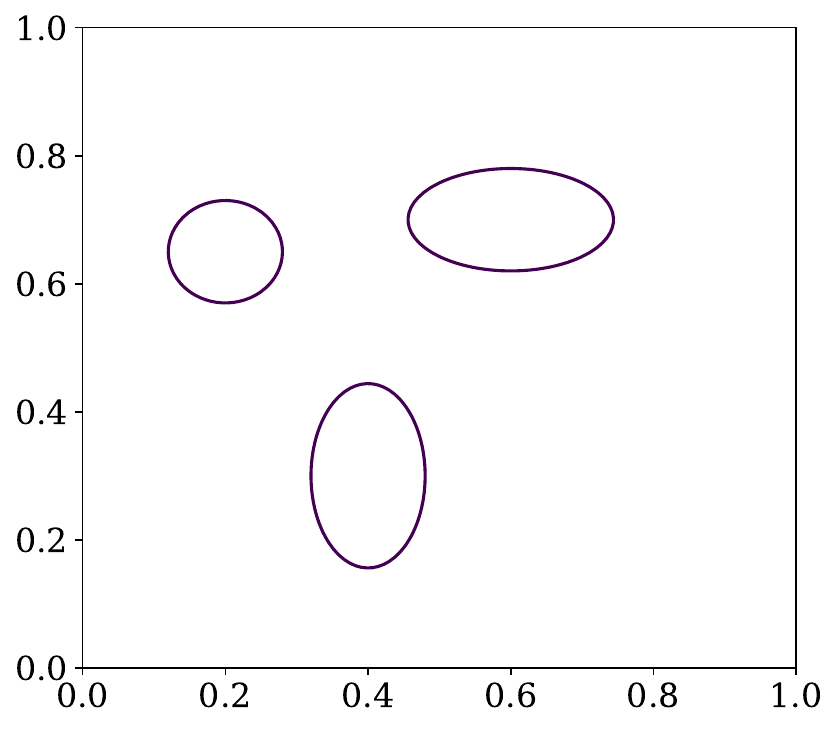}
        \caption{ground truth}
    \end{subfigure}%
~
    \begin{subfigure}[t]{0.334\textwidth} 
        \centering
        \includegraphics[height=1.99in,width=1.99in]{./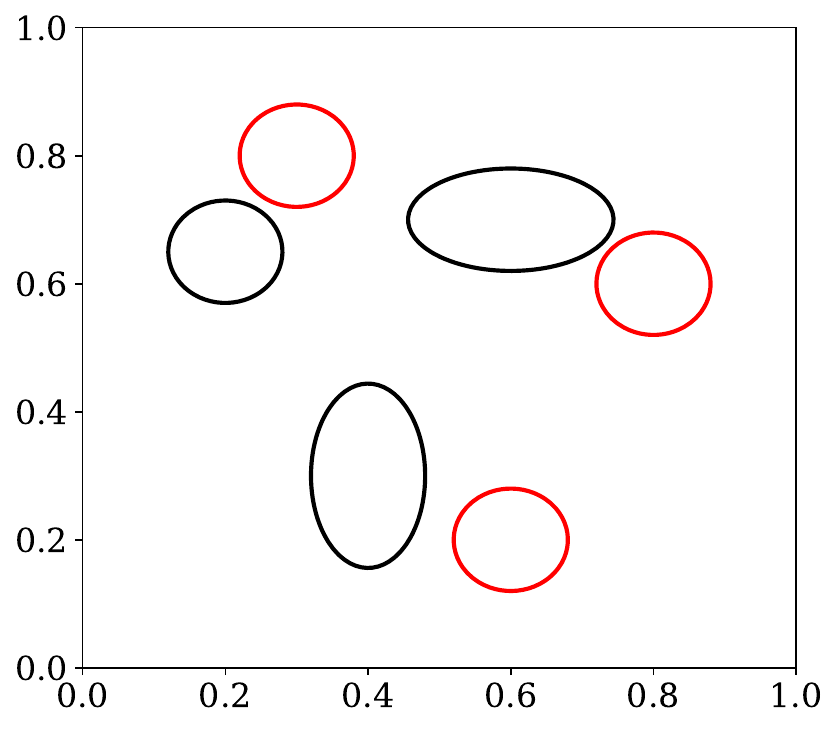}
        \caption{initialization (red)}
    \end{subfigure}%
  
    \begin{subfigure}[t]{0.334\textwidth} 
        \centering
        \includegraphics[height=1.99in,width=1.99in]{./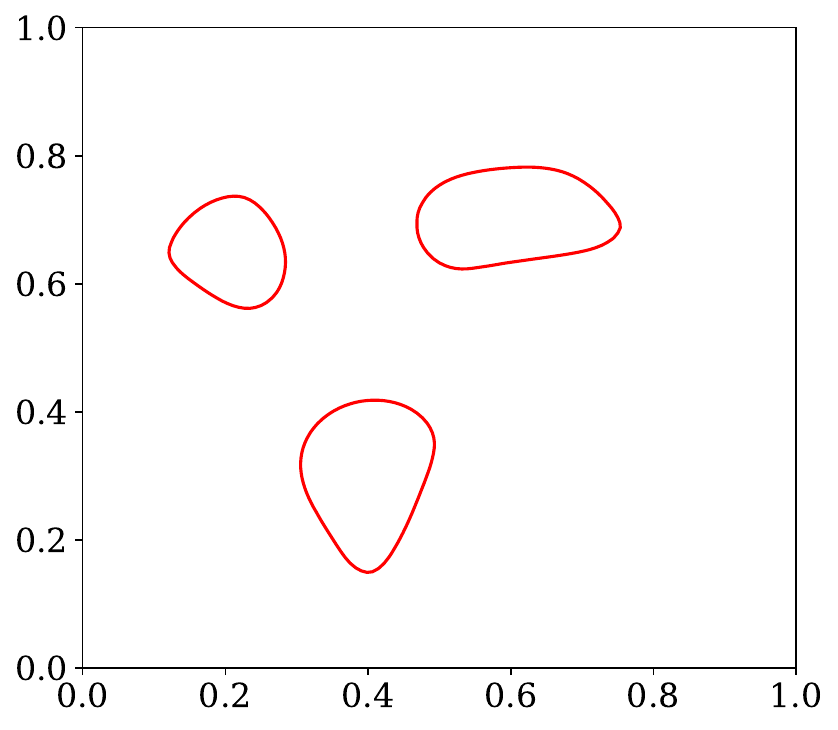}
        \caption{reconstruction}
    \end{subfigure}
    ~
    \begin{subfigure}[t]{0.334\textwidth} 
        \centering
        \includegraphics[height=1.99in,width=1.99in]{./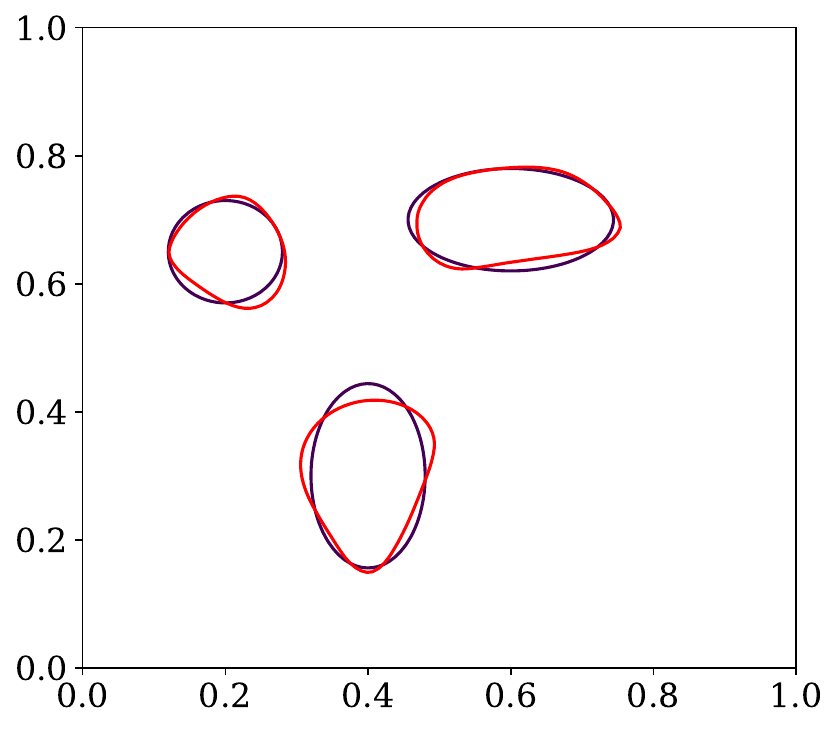}
        \caption{reconstruction (red) and ground truth (black)}
    \end{subfigure}
    \caption{Reconstruction of three ellipses using $I=7$ currents  and $K=70$ point measurements with $0.63\%$ noise.}\label{fig:3ellipses}
\end{figure}

\begin{figure}[ht]
    \centering
    \begin{subfigure}[t]{0.334\textwidth}
        \centering
\definecolor{preto}{RGB}{0,0,0}
\definecolor{vermelho}{RGB}{254,49,49}
\definecolor{azul_escuro}{RGB}{0,0,255}
\definecolor{cinza}{RGB}{195,195,195}
\definecolor{papel_amarelo}{RGB}{255, 249, 240}

\begin{tikzpicture}[scale=4.1]
\draw[line width=0.5mm, preto](0.4,1) -- (0.6,1);
\draw[line width=0.5mm, preto](0.4,0) -- (0.6,0);
	\draw[line width=0.7mm, vermelho](0,1) -- (0.4,1);
	\draw[line width=0.7mm, vermelho](0,0) -- (0,1);
	\draw[line width=0.7mm, vermelho] (0,0) -- (0.4,0);
    \draw[line width=0.7mm, vermelho](0.6,1) -- (1,1);
	\draw[line width=0.7mm, vermelho](1,0) -- (1,1);
	\draw[line width=0.7mm, vermelho] (0.6,0) -- (1,0);
	\draw[line width=0.9mm, preto](0.6,0.97) -- (0.6,1.03);
	\draw[line width=0.9mm, preto](0.4,0.97) -- (0.4,1.03);
	\draw[line width=0.9mm, preto](0.6,-0.03) -- (0.6,0.03);
	\draw[line width=0.9mm, preto](0.4,-0.03) -- (0.4,0.03);
	\draw[line width=0.3mm, preto] (0.6,0.7) ellipse (0.144cm and 0.08cm) node[anchor= center] {$\Om$};
\fill[vermelho, opacity=0.2] (0.6,0.7) ellipse (0.144cm and 0.08cm);
\draw[line width=0.3mm, preto] (0.4,0.3) ellipse (0.08cm and 0.144cm) node[anchor= center] {$\Om$};
\fill[vermelho, opacity=0.2] (0.4,0.3) ellipse (0.08cm and 0.144cm);
\draw[line width=0.3mm, preto] (0.2,0.65) ellipse (0.08cm and 0.08cm) node[anchor= center] {$\Om$};
\fill[vermelho, opacity=0.2] (0.2,0.65) ellipse (0.08cm and 0.08cm);	
	\def\raio{0.015}
	\def\transparenc{0.7}	
	\def\h{0.2};

	\foreach \x in {0,\h,...,1.0}{
		 \draw[fill= preto, opacity=\transparenc] (0,\x) circle (\raio cm);
}
	\foreach \x in {0,\h,...,1.0}{
		 \draw[fill= preto, opacity=\transparenc] (1,\x) circle (\raio cm);
}
	\foreach \x in {\h}{
		 \draw[fill= preto, opacity=\transparenc] (\x,1) circle (\raio cm);
}
	\foreach \x in {4*\h}{
		 \draw[fill= preto, opacity=\transparenc] (\x,1) circle (\raio cm);
}
	\foreach \x in {\h}{
		 \draw[fill= preto, opacity=\transparenc ] (\x,0) circle (\raio cm);
}
	\foreach \x in {4*\h}{
		 \draw[fill= preto, opacity=\transparenc] (\x,0) circle (\raio cm);
}
	\draw (0.5, -0.08) node[anchor= center] {$\Gamma_0$};
	\draw (0.5, 1.08) node[anchor= center] {$\Gamma_0$};

\end{tikzpicture}
    \end{subfigure}%
    ~ 
\begin{subfigure}[t]{0.334\textwidth}
        \centering
\definecolor{preto}{RGB}{0,0,0}
\definecolor{vermelho}{RGB}{254,49,49}
\definecolor{azul_escuro}{RGB}{0,0,255}
\definecolor{cinza}{RGB}{195,195,195}
\definecolor{papel_amarelo}{RGB}{255, 249, 240}

\begin{tikzpicture}[scale=4.1]
\draw[line width=0.5mm, preto](0.4,1) -- (0.6,1);
\draw[line width=0.5mm, preto](0.4,0) -- (0.6,0);
	\draw[line width=0.7mm, vermelho](0,1) -- (0.4,1);
	\draw[line width=0.7mm, vermelho](0,0) -- (0,1);	
	\draw[line width=0.7mm, vermelho] (0,0) -- (0.4,0);
    \draw[line width=0.7mm, vermelho](0.6,1) -- (1,1);
	\draw[line width=0.7mm, vermelho](1,0) -- (1,1);
	\draw[line width=0.7mm, vermelho] (0.6,0) -- (1,0);
	\draw[line width=0.9mm, preto](0.6,0.97) -- (0.6,1.03);
	\draw[line width=0.9mm, preto](0.4,0.97) -- (0.4,1.03);
	\draw[line width=0.9mm, preto](0.6,-0.03) -- (0.6,0.03);
	\draw[line width=0.9mm, preto](0.4,-0.03) -- (0.4,0.03);
	\draw[line width=0.3mm, preto] (0.6,0.7) ellipse (0.144cm and 0.08cm) node[anchor= center] {$\Om$};
\fill[vermelho, opacity=0.2] (0.6,0.7) ellipse (0.144cm and 0.08cm);
\draw[line width=0.3mm, preto] (0.4,0.3) ellipse (0.08cm and 0.144cm) node[anchor= center] {$\Om$};
\fill[vermelho, opacity=0.2] (0.4,0.3) ellipse (0.08cm and 0.144cm);
\draw[line width=0.3mm, preto] (0.2,0.65) ellipse (0.08cm and 0.08cm) node[anchor= center] {$\Om$};
\fill[vermelho, opacity=0.2] (0.2,0.65) ellipse (0.08cm and 0.08cm);	
	\def\raio{0.015}
	\def\transparenc{0.7}	
	\def\h{0.1};
	\foreach \x in {0,\h,...,1.1}{
		 \draw[fill= preto, opacity=\transparenc] (0,\x) circle (\raio cm);
}
	\foreach \x in {0,\h, ...,1.1}{
		 \draw[fill= preto, opacity=\transparenc] (1,\x) circle (\raio cm);
}
	\foreach \x in {0.0,\h , ..., 0.4}{
		 \draw[fill= preto, opacity=\transparenc] (\x,1) circle (\raio cm);
}
	\foreach \x in {0.7, 0.8, 0.9}{
		 \draw[fill= preto, opacity=\transparenc] (\x,1) circle (\raio cm);
}
	\foreach \x in {0.1, 0.2,0.3}{
		 \draw[fill= preto, opacity=\transparenc ] (\x,0) circle (\raio cm);
}
	\foreach \x in {0.7,0.8, 0.9}{
		 \draw[fill= preto, opacity=\transparenc] (\x,0) circle (\raio cm);
}
	\draw (0.5, -0.08) node[anchor= center] {$\Gamma_0$};
	\draw (0.5, 1.08) node[anchor= center] {$\Gamma_0$};
\end{tikzpicture}
    \end{subfigure}%
~
\begin{subfigure}[t]{0.334\textwidth}
        \centering

\begin{tikzpicture}[scale=4.1]
\draw[line width=0.5mm, preto](0.4,1) -- (0.6,1);
\draw[line width=0.5mm, preto](0.4,0) -- (0.6,0);
	\draw[line width=0.7mm, vermelho](0,1) -- (0.4,1);
	\draw[line width=0.7mm, vermelho](0,0) -- (0,1);	
	\draw[line width=0.7mm, vermelho] (0,0) -- (0.4,0);
    \draw[line width=0.7mm, vermelho](0.6,1) -- (1,1);
	\draw[line width=0.7mm, vermelho](1,0) -- (1,1);
	\draw[line width=0.7mm, vermelho] (0.6,0) -- (1,0);
	\draw[line width=0.9mm, preto](0.6,0.97) -- (0.6,1.03);
	\draw[line width=0.9mm, preto](0.4,0.97) -- (0.4,1.03);
	\draw[line width=0.9mm, preto](0.6,-0.03) -- (0.6,0.03);
	\draw[line width=0.9mm, preto](0.4,-0.03) -- (0.4,0.03);
	\draw[line width=0.3mm, preto] (0.6,0.7) ellipse (0.144cm and 0.08cm) node[anchor= center] {$\Om$};
\fill[vermelho, opacity=0.2] (0.6,0.7) ellipse (0.144cm and 0.08cm);
\draw[line width=0.3mm, preto] (0.4,0.3) ellipse (0.08cm and 0.144cm) node[anchor= center] {$\Om$};
\fill[vermelho, opacity=0.2] (0.4,0.3) ellipse (0.08cm and 0.144cm);
\draw[line width=0.3mm, preto] (0.2,0.65) ellipse (0.08cm and 0.08cm) node[anchor= center] {$\Om$};
\fill[vermelho, opacity=0.2] (0.2,0.65) ellipse (0.08cm and 0.08cm);	
	\def\raio{0.015}
	\def\transparenc{0.7}	
	\def\h{0.05};
	\foreach \x in {0,\h,...,1.0}{
		 \draw[fill= preto, opacity=\transparenc] (0,\x) circle (\raio cm);
}
	\foreach \x in {0,\h, ...,1.0}{
		 \draw[fill= preto, opacity=\transparenc] (1,\x) circle (\raio cm);
}
	\foreach \x in {0.0,\h , ..., 0.4}{
		 \draw[fill= preto, opacity=\transparenc] (\x,1) circle (\raio cm);
}
	\foreach \x in {0.65, 0.7, ...,1.05}{
		 \draw[fill= preto, opacity=\transparenc] (\x,1) circle (\raio cm);
}
	\foreach \x in {0.0, 0.05, ...,0.4}{
		 \draw[fill= preto, opacity=\transparenc ] (\x,0) circle (\raio cm);
}
	\foreach \x in {0.65,0.7, ..., 1}{
		 \draw[fill= preto, opacity=\transparenc] (\x,0) circle (\raio cm);
}
	\draw (0.5, -0.08) node[anchor= center] {$\Gamma_0$};
	\draw (0.5, 1.08) node[anchor= center] {$\Gamma_0$};
\end{tikzpicture}
    \end{subfigure}
    
    \begin{subfigure}[t]{0.334\textwidth} 
        \centering
        \includegraphics[height=1.99in,width=1.99in]{./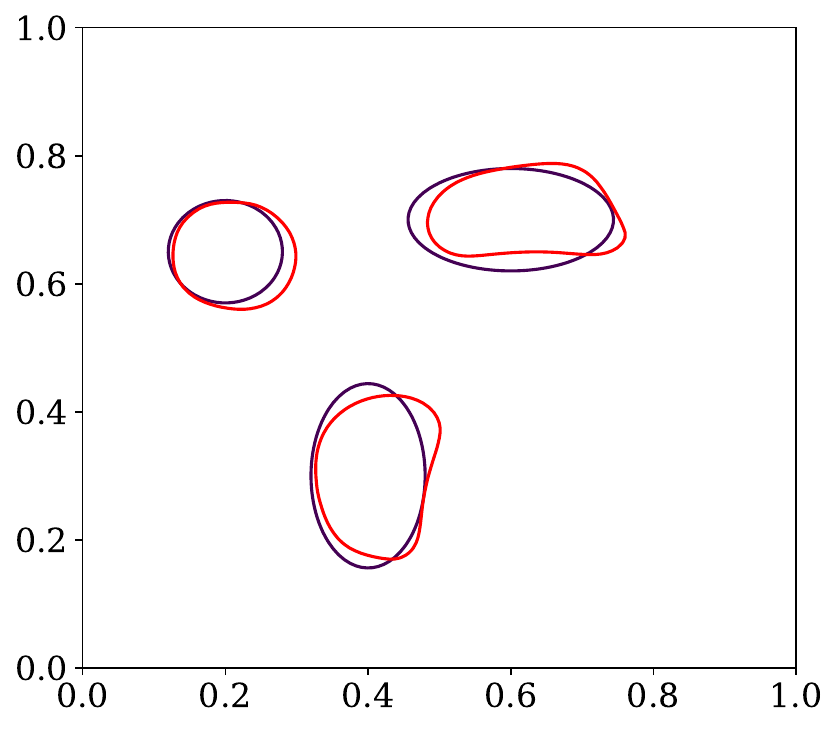}
        \caption{$K=16$ point measurements,\\ $0.59 \%$ noise, $25.52\%$ relative error}
    \end{subfigure}%
~
    \begin{subfigure}[t]{0.334\textwidth} 
        \centering
        \includegraphics[height=1.99in,width=1.99in]{./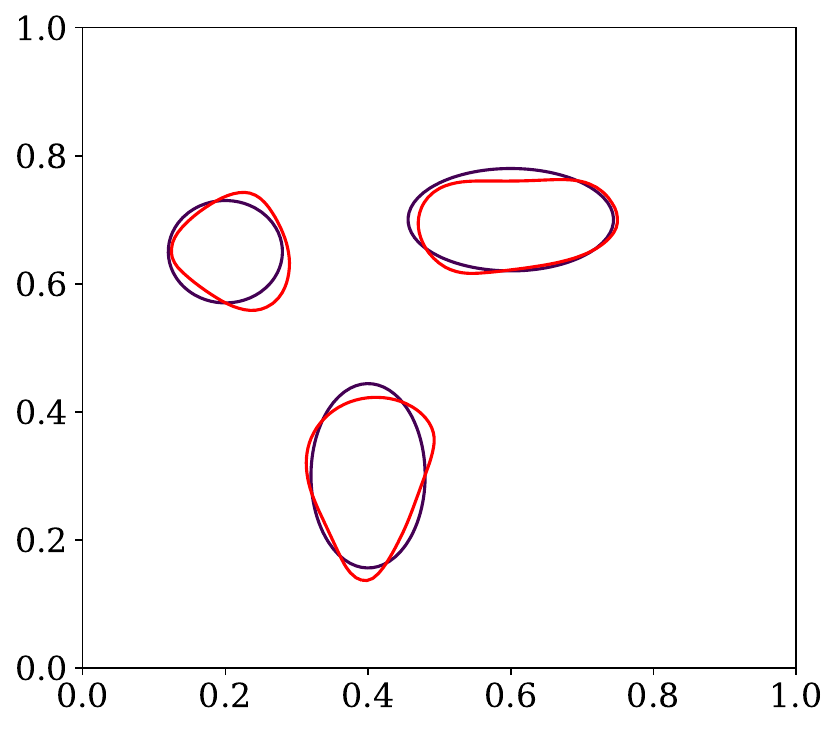}
        \caption{$K=34$ point measurements,\\ $0.54 \%$ noise, $18.8\%$ relative error}
    \end{subfigure}%
    ~
    \begin{subfigure}[t]{0.334\textwidth} 
        \centering
        \includegraphics[height=1.99in,width=1.99in]{./tests2/2-2-2-1/Difference.pdf}
        \caption{$K=70$ point measurements,\\ $0.63 \%$ noise, $16.1\%$ relative error}
    \end{subfigure}
    \caption{Reconstruction of three ellipses using $I=7$ currents  and different sets of point measurements shown in the first row.}\label{3ellipses_nbpt}
\end{figure}

\begin{table}[ht]
  \centering
  \begin{tabular}{cccc}
    \hline
     noise & $K=16$ points &  $K=34$ points &  $K=70$ points \\ \hline
     $0\%$ 
     &
    \begin{minipage}{0.25\textwidth}
            \centering{\vspace{0.1cm}{\scriptsize error: $25.3\%$}}\\
      \includegraphics[width=1\textwidth, height = 1\textwidth]{./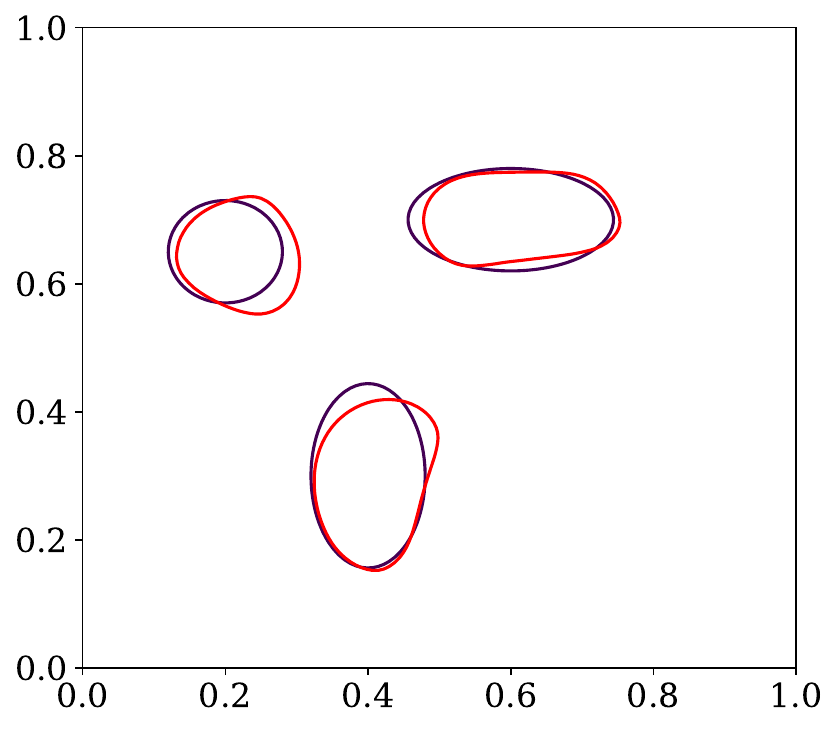}
      \end{minipage}
    &
    \begin{minipage}{0.25\textwidth}
                \centering{\vspace{0.1cm}{\scriptsize error: $17.5\%$}}\\
      \includegraphics[width=1\textwidth, height = 1\textwidth]{./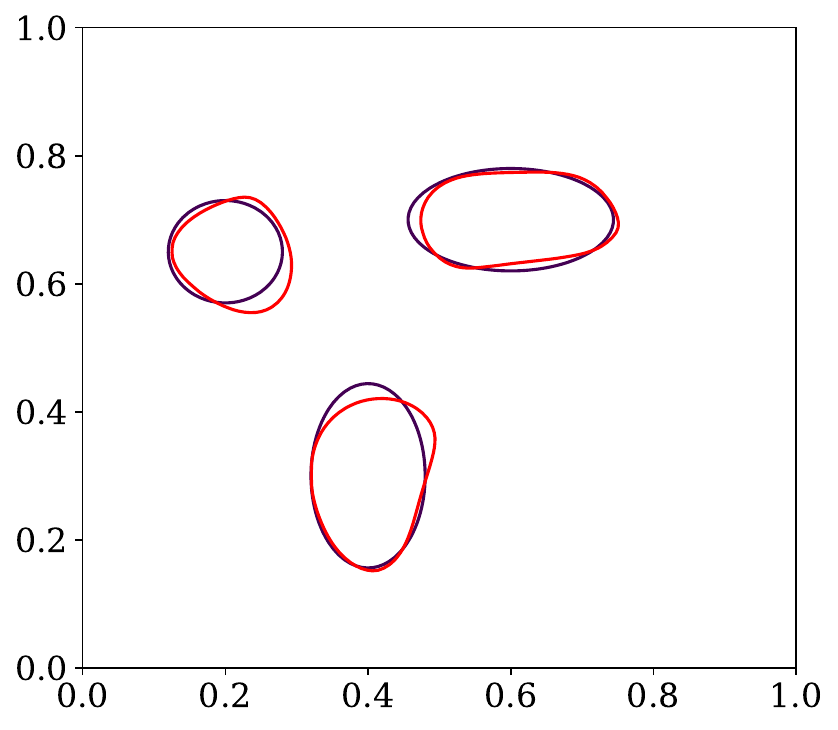}
    \end{minipage}
    & 
    \begin{minipage}{0.25\textwidth}
                \centering{\vspace{0.1cm}{\scriptsize error: $16.3\%$}}\\
      \includegraphics[width=1\textwidth, height = 1\textwidth]{./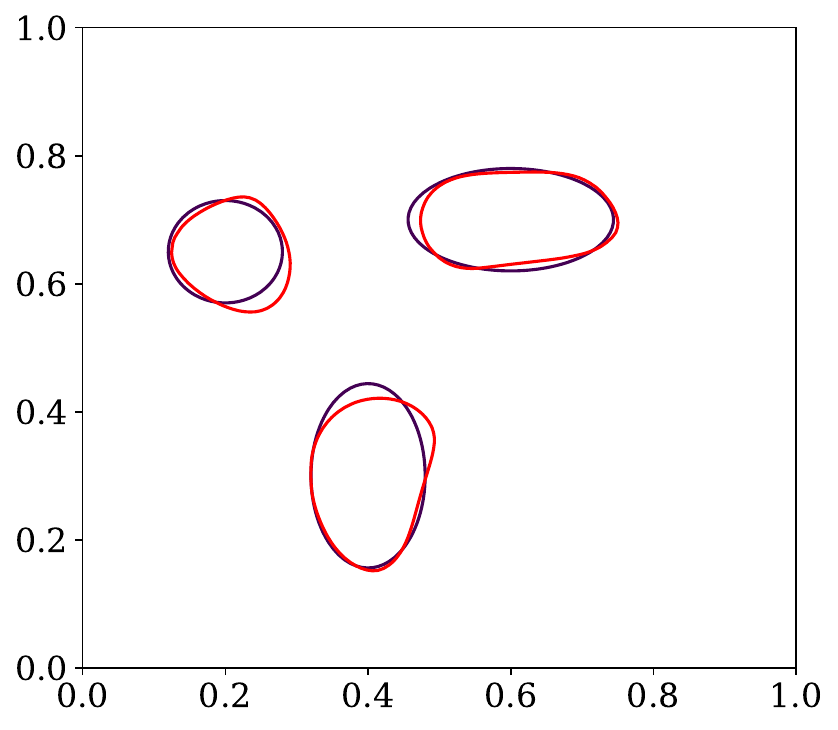}
    \end{minipage}
    \\ \hline   
    $0.59\%$
    &
    \begin{minipage}{0.25\textwidth}
                \centering{\vspace{0.1cm}{\scriptsize error: $25.5\%$}}\\
      \includegraphics[width=1\textwidth, height = 1\textwidth]{./tests2/2-2-0-1/Difference.pdf}
    \end{minipage}
    &
    \begin{minipage}{0.25\textwidth}
                \centering{\vspace{0.1cm}{\scriptsize error: $18.8\%$}}\\
      \includegraphics[width=1\textwidth, height = 1\textwidth]{./tests2/2-2-1-1/Difference.pdf}
    \end{minipage}
    & 
    \begin{minipage}{0.25\textwidth}
                \centering{\vspace{0.1cm}{\scriptsize error: $16.1\%$}}\\
      \includegraphics[width=1\textwidth, height = 1\textwidth]{./tests2/2-2-2-1/Difference.pdf}
    \end{minipage}
    \\ \hline  
    $1.14\%$
    &
    \begin{minipage}{0.25\textwidth}
                \centering{\vspace{0.1cm}{\scriptsize error: $38.4\%$}}\\
      \includegraphics[width=1\textwidth, height = 1\textwidth]{./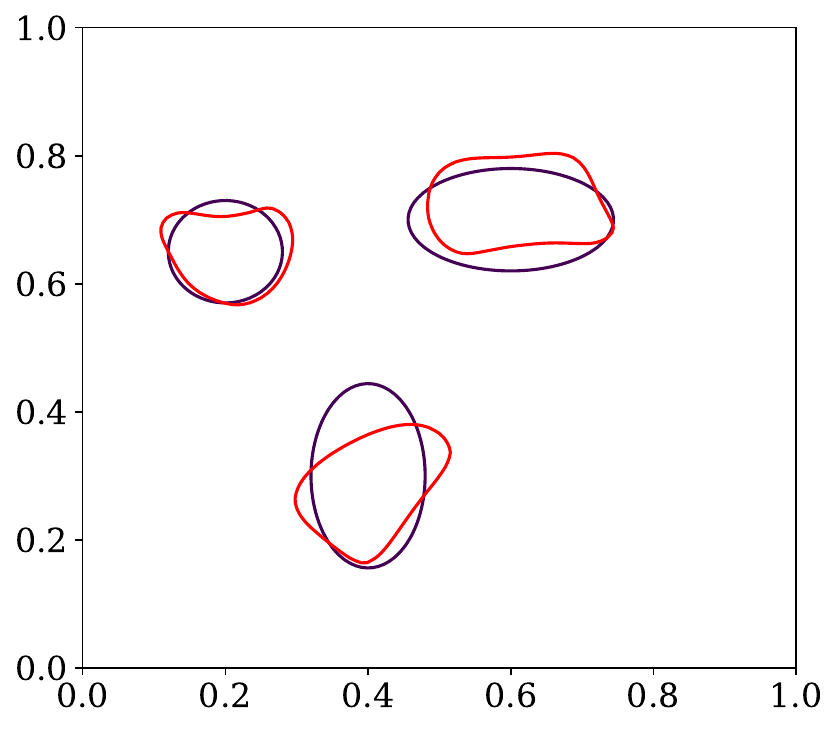}
    \end{minipage}
    &
    \begin{minipage}{0.25\textwidth}
                \centering{\vspace{0.1cm}{\scriptsize error: $38.9\%$}}\\
      \includegraphics[width=1\textwidth, height = 1\textwidth]{./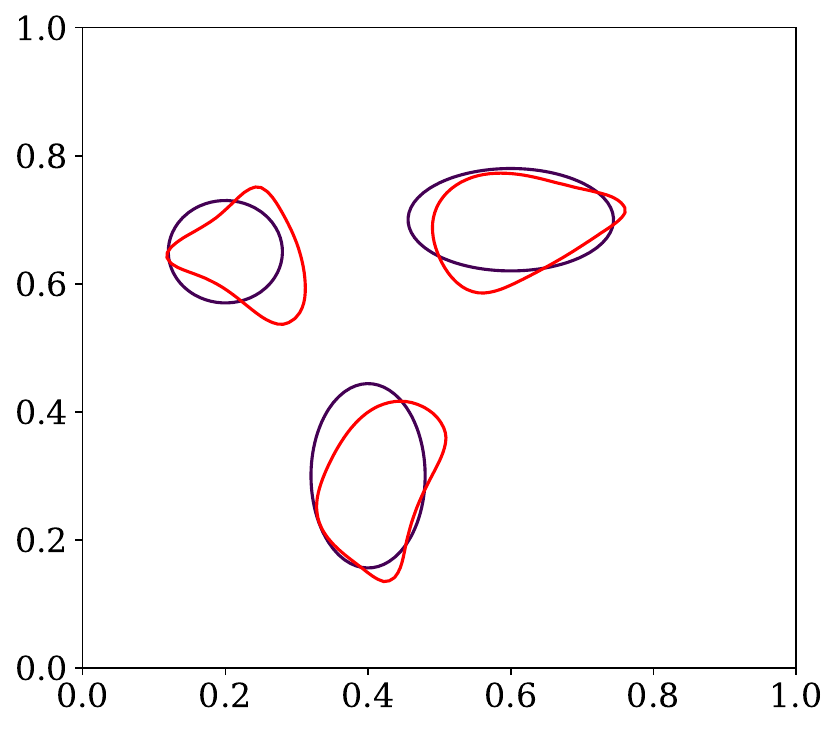}
    \end{minipage}
    & 
    \begin{minipage}{0.25\textwidth}
                \centering{\vspace{0.1cm}{\scriptsize error: $29.7\%$}}\\
      \includegraphics[width=1\textwidth, height = 1\textwidth]{./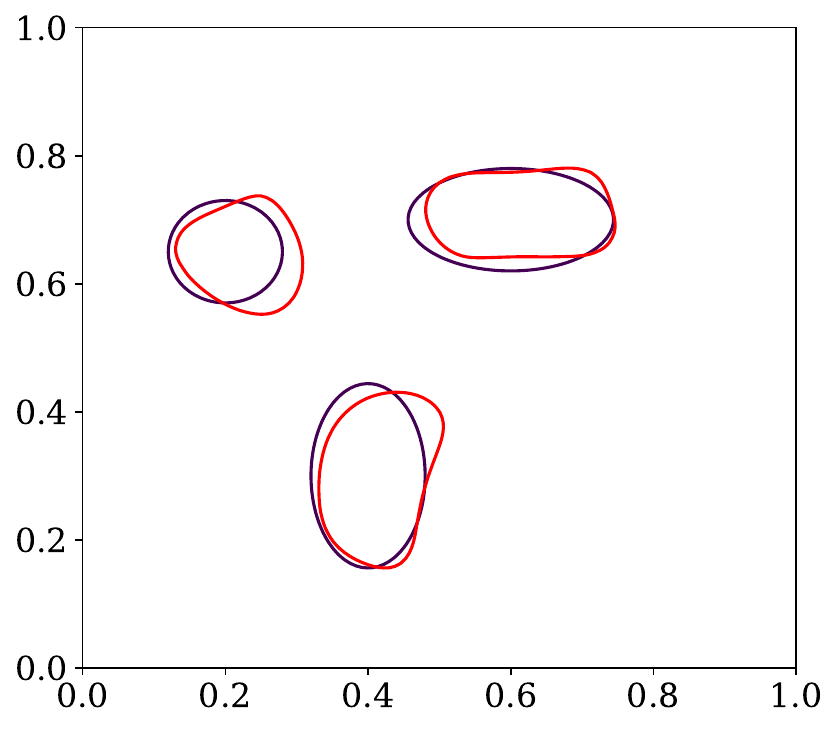}
    \end{minipage}
  \end{tabular}
  \caption{Influence of noise and number of point measurements on the reconstruction of three ellipses using $I=7$ currents (the noise value is the average over the noise values for the three levels of point measurements).}\label{fig:noise_influence}
\end{table}

\noindent{\bf Acknowledgements.} Yuri Flores Albuquerque and Antoine Laurain gratefully acknowledge support of the RCGI - Research Centre for Gas Innovation, hosted by the University of São Paulo (USP) and sponsored by FAPESP - S\~ao Paulo Research Foundation (2014/50279-4) and Shell Brasil. This research was carried out in association with the ongoing R\&D project registered as ANP 20714-2 - Desenvolvimento de t\'ecnicas num\'ericas e software para problemas de invers\~ao com aplica\c{c}\~oes em processamento s\'ismico (USP / Shell Brasil / ANP), sponsored by Shell Brasil under the ANP R\&D levy as ``Compromisso de Investimentos com Pesquisa e Desenvolvimento''.
Antoine Laurain gratefully acknowledges the support of FAPESP, process: 2016/24776-6 ``Otimiza\c{c}\~ao de forma e problemas de fronteira livre'', and of the Brazilian National Council for Scientific and Technological Development  (Conselho Nacional de Desenvolvimento Cient\'ifico e Tecnol\'ogico - CNPq) through the process: 408175/2018-4 ``Otimiza\c{c}\~ao de forma n\~ao suave e controle de problemas de fronteira livre'', and through the program  ``Bolsa de Produtividade em Pesquisa - PQ 2018'', process: 304258/2018-0.

\section{Appendix 1: averaged adjoint method}\label{sec:appendix}

Let $\tz>0$ be given and $E = E(\D), F=F(\D)$ be  two Banach spaces, and consider a parameterization 
$\Omega_t = \Tt(\Omega)$ for $t\in [0,\tz]$ such that $\Tt(\D)=\D$, i.e. which leaves $\D$ globally invariant. 
Our goal is to differentiate shape functions of the type 
$J(\Omega_t)$ which can be written using a Lagrangian as $J(\Omega_t) = \mathcal{L}(\Omega_t, u^t,\hat\psi)$,
where $u^t\in E(\D)$ and $\hat\psi\in F(\D) $.
The main appeal of the Lagrangian is that 
we actually only need to compute the partial derivative with respect to $t$ of $\mathcal{L}(\Omega_t,\hat\varphi,\hat\psi)$ to compute the derivative of $J(\Omega_t)$,  indeed this is the main result of Theorem \ref{thm:sturm}.

In order to differentiate $\mathcal{L}(\Omega_t, \hat\varphi,\hat\psi)$, the change of coordinates $x\mapsto T_t(x)$ is used in the integrals.
In the process appear the pullbacks $\hat\varphi\circ\Tt\in E(\D)$ and $\hat\psi\circ\Tt\in F(\D)$ which depend on $t$.
The usual procedure in shape optimization to compensate this effect is to use a reparameterization 
$\mathcal{L}(\Omega_t, \Psi_t(\varphi), \Psi_t (\psi))$ instead of $\mathcal{L}(\Omega_t,\hat\varphi,\hat\psi)$, where 
$\Psi_t$ is an appropriate bijection of 
$E(\D)$ and $F(\D)$, and $\varphi\in E(\D)$, $\psi\in F(\D)$.
Now the change of variable in the integrals yields functions $\varphi$ and $\psi$ in the integrands, which are independent of $t$.
In this paper we take $E(\D) = W^1_{\Gamma,p}(\D)$, $F(\D) = W^1_{\Gamma,p'}(\D)$, and $\Psi_t(\psi) = \psi\circ\Tt^{-1}$ is then a bijection of $E(\D)$ and $F(\D)$; see \cite[Theorem 2.2.2, p.52]{b_ZI_1989a}.

Thus we consider the so-called {\it shape-Lagrangian} $G:[0,\tz]\times E\times F \rightarrow \R$ with
$$ G(t,\varphi,\psi):
=\mathcal{L}(\Om_t,\varphi\circ\Tt^{-1},\psi\circ\Tt^{-1}).$$
The main result of this section, Theorem \ref{thm:sturm}, shows that in order to obtain the shape derivative of $\mathcal{L}$, it is enough to compute the partial derivative with respect to $t$ of $G$ while assigning the values $\varphi=u$ and $\psi=p$, where $u$ is the state and $p$ is the adjoint state. 
The main ingredient is the introduction of the averaged adjoint equation described below.

Let us assume that for each $t\in [0,\tz]$ the equation
\ben\label{eq:state_G}
d_\psi G(t,u^t,0;\hat\psi) = 0\;\text{ for all } \hat\psi \in  F.
\een
admits a unique solution $u^t\in E$.
Further, we make the following assumptions for  $G$.
\begin{assumption}
\label{amp:gateaux_diffbar_G}
\label{amp:affine-linear}
For every $(t,\psi)\in [0,\tz]\times F$
\begin{enumerate}
\item[(i)]  $[0,1]\ni s\mapsto G(t,su^t + (1-s)u^0),\psi)$ is absolutely continuous. 
\item[(ii)] $[0,1]\ni s\mapsto d_\varphi G(t,su^t+(1-s)u^0,\psi;\hat{\varphi})$ belongs to $L^1(0,1)$ for all $\hat{\varphi}\in E$.
\end{enumerate}
\end{assumption}
When Assumption \ref{amp:affine-linear} is satisfied,  for $t\in [0,\tz]$ we introduce the \textit{averaged adjoint equation} associated with $u^t$ and $u^0$: 
find $p^t\in F$ such that 
\begin{equation}\label{averated_}
\int_0^1 d_\varphi G(t,su^t+(1-s)u^0,p^t;\hat{\varphi})\, ds =0 \quad \text{ for all } \hat{\varphi}\in E.
\end{equation}
In view of Assumption \ref{amp:affine-linear} we have 
\begin{equation}
\label{eq:main_averaged}
G(t,u^t,p^t)-G(t,u^0,p^t) = \int_0^1 d_\varphi G(t,su^t+(1-s)u^0,p^t;u^t-u^0)\, ds =0\quad \text{ for all } t\in[0,\tz].
\end{equation}
We can now state the main result of this section.
\begin{assumption}\label{H1}
We assume that
$$ \lim_{t\searrow 0} \frac{G(t,u^0,p^t)-G(0,u^0,p^t)}{t}=\partial_tG(0,u^0,p^0).$$
\end{assumption}
\begin{theorem}
\label{thm:sturm}
Let  Assumption \ref{amp:affine-linear} and Assumption \ref{H1} be satisfied and assume there exists a unique solution $p^t$  of the averaged adjoint equation \eqref{averated_}.
Then for all $\psi \in F$  we obtain
\begin{equation}\label{eq:dt_G_single}
\dt b(t,u^t) |_{t=0} = \dt(G(t,u^t,\psi))|_{t=0}=\partial_t G(0,u^0,p^0).
\end{equation}
\end{theorem}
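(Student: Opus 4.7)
The plan is to chain three ingredients: the state equation, the averaged adjoint identity \eqref{eq:main_averaged}, and Assumption \ref{H1}. Since $G$ is affine in its third argument (as is implicit in the shape-Lagrangian construction used throughout, cf.\ \eqref{G_reduced}) and $u^t$ satisfies \eqref{eq:state_G}, one has $G(t,u^t,\psi)=G(t,u^t,0)$ for every $\psi\in F$, so the value $G(t,u^t,\psi)$ is independent of $\psi$ and in particular equals $G(t,u^t,p^t)$. Applying the same reasoning at $t=0$ also yields $G(0,u^0,\psi)=G(0,u^0,p^0)=G(0,u^0,p^t)$. Hence it suffices to prove
\begin{equation*}
\lim_{t\searrow 0}\frac{G(t,u^t,p^t)-G(0,u^0,p^0)}{t}=\partial_tG(0,u^0,p^0),
\end{equation*}
since this common limit will also equal $\tfrac{d}{dt}G(t,u^t,\psi)|_{t=0}$ for any $\psi\in F$.

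Next I would add and subtract $G(t,u^0,p^t)$ to split the quotient:
\begin{equation*}
\frac{G(t,u^t,p^t)-G(0,u^0,p^0)}{t}=\frac{G(t,u^t,p^t)-G(t,u^0,p^t)}{t}+\frac{G(t,u^0,p^t)-G(0,u^0,p^0)}{t}.
\end{equation*}
By \eqref{eq:main_averaged} the first summand is identically zero for every $t\in[0,\tz]$; this is the essential payoff of defining $p^t$ through the averaged adjoint equation, since Assumption \ref{amp:affine-linear} allows the difference $G(t,u^t,p^t)-G(t,u^0,p^t)$ to be written as $\int_0^1 d_\varphi G(t,su^t+(1-s)u^0,p^t;u^t-u^0)\,ds$, which vanishes by the very definition \eqref{averated_} of $p^t$ tested against $\hat\varphi = u^t-u^0\in E$.

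For the remaining summand, I would use $G(0,u^0,p^0)=G(0,u^0,p^t)$ (affine-linearity plus the state equation at $t=0$) to rewrite it as $[G(t,u^0,p^t)-G(0,u^0,p^t)]/t$, and then invoke Assumption \ref{H1} directly to obtain convergence to $\partial_tG(0,u^0,p^0)$ as $t\searrow 0$. The main obstacle is really only conceptual: one must recognise that $p^t$ is the only dual variable that simultaneously kills the first summand (via the averaged adjoint equation) while leaving the second summand in the precise form to which Assumption \ref{H1} applies. The manipulations themselves are elementary insertions of $\pm G(t,u^0,p^t)$ and $\pm G(0,u^0,p^t)$, relying only on affine-linearity of $G$ in $\psi$ and the fundamental-theorem-of-calculus representation guaranteed by Assumption~\ref{amp:affine-linear}(i)-(ii).
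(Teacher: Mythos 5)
Your proof is correct and follows essentially the same route as the paper: the cancellation $G(t,u^t,p^t)=G(t,u^0,p^t)$ from \eqref{eq:main_averaged}, the $\psi$-independence of $G(t,u^t,\cdot)$ coming from affine-linearity in the adjoint variable plus the state equation \eqref{eq:state_G}, and Assumption \ref{H1} applied to the remaining quotient $\left[G(t,u^0,p^t)-G(0,u^0,p^t)\right]/t$ --- exactly the chain the paper relies on (compare the opening steps of the proof of Theorem \ref{T:shape}). Your explicit flagging of the affine structure of $G$ in its third argument, which the paper leaves implicit in Assumption \ref{amp:affine-linear} and in the Lagrangian construction, is the right reading and closes the only point the paper glosses over.
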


\section{Appendix 2: proof of Theorem \ref{thm01}}

For the convenience of the reader we write here the proof of Theorem~\ref{thm01}, which is essentially the same as the proof of \cite[Theorem 1]{a_GRRE_1989a}. 
We recall from \cite{MR990595} that if $\D\cup \Gamma$ is regular in the sense of Gr\"oger, then the mapping $\Jop:W^1_{\Gamma,q}(\D) \to (W^1_{\Gamma,q'}(\D))^*$ defined by  
$$\langle \Jop v,w\rangle := \int_\D \nabla v\cdot \nabla w + vw $$ 
is onto and hence 
the inverse $\Jop^{-1}:(W^1_{\Gamma,q'}(\D))^*\to W^1_{\Gamma,q}(\D)$ is well-defined. 
\begin{proof}[Proof of Theorem~\ref{thm01}]
Let $f\in (W_{\Gamma,q'}^1(\D))^*$ be given. 
As in  \cite[Theorem 1]{a_GRRE_1989a}  we define for $s>0$ the mapping 
\begin{align*}
Q_f: W_{\Gamma,q}^1(\D) & \to W_{\Gamma,q}^1(\D),\\
    u & \mapsto \Jop^{-1}(D^*BDu + tf),
\end{align*}
where $D:W_{\Gamma,2}^1(\D)\to L^2(\D,\bbR^3)$ is defined by $u\mapsto (u,\nabla u)$, $D^*: L^2(\D,\bbR^3)^*\to (W_{\Gamma,2}^1(\D))^*$ is the adjoint of $D$ and $By:= y - s\widehat\ma y$ for $y=(y_0,y_1,y_2)\in L^2(\D,\bbR^3)$ and $\widehat\ma y: = (0,\ma (y_1,y_2)^\transp)$. 
We observe that $D^*BDu = D^*Du -s D^*(0,\ma \nabla u)$ and $D^*D=\Jop$, which yields 
\begin{equation*}
Q_fu = u - s\Jop^{-1}( D^*(0,\ma \nabla u) - f).
\end{equation*}
If $Q_f$ has a fixed point in $W_{\Gamma,q}^1(\D)$, then we obtain $D^*(0,\ma \nabla u) = f$ in $(W_{\Gamma,q'}^1(\D))^*$ which is equivalent to $\mathcal{A}_q  u=f$ in $(W_{\Gamma,q'}^1(\D))^*$. 
The proper choice of $s$ allows to show that $Q_f$ is a contraction and the result follows 
from Banach's fixed point theorem.
Note that $\|D\|_{L^2} = \|D^*\|_{L^2}=1$.
Then for all $v,w\in W_{\Gamma,q}^1(\D)$ we have
\begin{equation} \label{E:GR_1}
\|Q_fv-Q_fw\|_{W^1_{q}} \le \|\Jop^{-1}\|_{L(W^{-1}_{\Gamma,q},W^1_{\Gamma,q})}  \|D^*\|_{L^2}  \|BD(v-w)\|_{L^q}.
\end{equation}
Now, using assumptions~\eqref{assump2} yields, for all $s>0$,
\ben\label{E:stackrel_By}
\begin{split}
|By(x)|^2 & = |y(x)|^2 - 2s \widehat\ma(x)y(x)\cdot y(x) + s^2 |\widehat\ma(x)y(x)|^2 \\
&\le |y(x)|^2 - 2ms|y(x)|^2 + s^2 M^2|y(x)|^2.
\end{split}
\een
Hence, choosing $s= m/M^2$ yields $|By(x)|\le k|y(x)|$ with $k:= (1-m^2/M^2)^{1/2}$ and thus
\ben\label{E:GR_2}
\|BD(u-v)\|_{L^q} \stackrel{\eqref{E:stackrel_By}}{\le} k\|D(u-v)\|_{L^q}.
\een
Combining \eqref{E:GR_1}, \eqref{E:GR_2}, $M_q = \|\Jop^{-1}\|_{L(W^{-1}_{\Gamma,q},W^1_{\Gamma,q})} $ and  $\|D\|_{L^2} =1$ yields
\begin{equation*}
\|Q_fu-Q_fv\|_{W^1_{q}} \le k M_q\|u-v\|_{W^1_q}.
\end{equation*}
Since we have assumed that $k M_q<1$, it follows that $Q_f$ is a contraction.

Let $u$ and $v$ be the fixed points of $Q_f, Q_g$, respectively.
Then we have
\begin{align*}
\|u-v\|_{W^1_{q}} &= \|Q_f u-Q_g v\|_{W^1_{q}} \le k M_q \|u-v\|_{W^1_q} + \|Q_f v-Q_g v\|_{W^1_{q}}\\
&\leq  k M_q\|u-v\|_{W^1_q} + t  \|\Jop^{-1}\|_{L(W^{-1}_{\Gamma,q},W^1_{\Gamma,q})}\|f - g\|_{W^{-1}_{\Gamma,q}}.
\end{align*}
Using $M_q = \|\Jop^{-1}\|_{L(W^{-1}_{\Gamma,q},W^1_{\Gamma,q})} $  we obtain
\begin{align*}
(1-k M_q)\|u-v\|_{W^1_{q}} & \leq  t  M_q \|f - g\|_{W^{-1}_{\Gamma,q}}.
\end{align*}
which proves \eqref{Aq_iso}.
By Lemma \ref{lemma01} the hypothesis $M_q k<1$ of Theorem \ref{thm01} is satisfied if $q>2$ is sufficiently close to $2$.
\end{proof}

\bibliographystyle{abbrv}
\bibliography{EIT_pointwise_arxiv}

\begin{thebibliography}{10}

\bibitem{MR2424078}
R.~A. Adams and J.~J.~F. Fournier.
\newblock {\em Sobolev spaces}, volume 140 of {\em Pure and Applied Mathematics
  (Amsterdam)}.
\newblock Elsevier/Academic Press, Amsterdam, second edition, 2003.

\bibitem{MR2329288}
L.~Afraites, M.~Dambrine, and D.~Kateb.
\newblock Shape methods for the transmission problem with a single measurement.
\newblock {\em Numer. Funct. Anal. Optim.}, 28(5-6):519--551, 2007.

\bibitem{MR2407028}
L.~Afraites, M.~Dambrine, and D.~Kateb.
\newblock On second order shape optimization methods for electrical impedance
  tomography.
\newblock {\em SIAM J. Control Optim.}, 47(3):1556--1590, 2008.

\bibitem{ans20553}
M.~Aln{\ae}s, J.~Blechta, J.~Hake, A.~Johansson, B.~Kehlet, A.~Logg,
  C.~Richardson, J.~Ring, M.~Rognes, and G.~Wells.
\newblock The fenics project version 1.5.
\newblock {\em Archive of Numerical Software}, 3(100), 2015.

\bibitem{MR3642251}
M.~Alsaker, S.~J. Hamilton, and A.~Hauptmann.
\newblock A direct {D}-bar method for partial boundary data electrical
  impedance tomography with a priori information.
\newblock {\em Inverse Probl. Imaging}, 11(3):427--454, 2017.

\bibitem{MR2888256}
H.~Ammari, J.~Garnier, V.~Jugnon, and H.~Kang.
\newblock Stability and resolution analysis for a topological derivative based
  imaging functional.
\newblock {\em SIAM J. Control Optim.}, 50(1):48--76, 2012.

\bibitem{MR2168949}
H.~Ammari and H.~Kang.
\newblock {\em Reconstruction of small inhomogeneities from boundary
  measurements}, volume 1846 of {\em Lecture Notes in Mathematics}.
\newblock Springer-Verlag, Berlin, 2004.

\bibitem{Bera_2018}
T.~K. Bera.
\newblock Applications of electrical impedance tomography ({EIT}): A short
  review.
\newblock {\em {IOP} Conference Series: Materials Science and Engineering},
  331:012004, mar 2018.

\bibitem{MR3723652}
E.~Beretta, S.~Micheletti, S.~Perotto, and M.~Santacesaria.
\newblock Reconstruction of a piecewise constant conductivity on a polygonal
  partition via shape optimization in {EIT}.
\newblock {\em J. Comput. Phys.}, 353:264--280, 2018.

\bibitem{MR2642680}
M.~Berggren.
\newblock A unified discrete-continuous sensitivity analysis method for shape
  optimization.
\newblock In {\em Applied and numerical partial differential equations},
  volume~15 of {\em Comput. Methods Appl. Sci.}, pages 25--39. Springer, New
  York, 2010.

\bibitem{MR2517928}
M.~Bonnet.
\newblock Higher-order topological sensitivity for 2-{D} potential problems.
  {A}pplication to fast identification of inclusions.
\newblock {\em Internat. J. Solids Structures}, 46(11-12):2275--2292, 2009.

\bibitem{MR1955896}
L.~Borcea.
\newblock Electrical impedance tomography.
\newblock {\em Inverse Problems}, 18(6):R99--R136, 2002.

\bibitem{MR2608623}
L.~Borcea, V.~Druskin, and A.~V. Mamonov.
\newblock Circular resistor networks for electrical impedance tomography with
  partial boundary measurements.
\newblock {\em Inverse Problems}, 26(4):045010, 30, 2010.

\bibitem{MR1776481}
M.~Br\"{u}hl and M.~Hanke.
\newblock Numerical implementation of two noniterative methods for locating
  inclusions by impedance tomography.
\newblock {\em Inverse Problems}, 16(4):1029--1042, 2000.

\bibitem{MR3400033}
L.~Chesnel, N.~Hyv\"{o}nen, and S.~Staboulis.
\newblock Construction of indistinguishable conductivity perturbations for the
  point electrode model in electrical impedance tomography.
\newblock {\em SIAM J. Appl. Math.}, 75(5):2093--2109, 2015.

\bibitem{MR2132313}
E.~T. Chung, T.~F. Chan, and X.-C. Tai.
\newblock Electrical impedance tomography using level set representation and
  total variational regularization.
\newblock {\em J. Comput. Phys.}, 205(1):357--372, 2005.

\bibitem{Costabel2019}
M.~Costabel.
\newblock On the limit sobolev regularity for {D}irichlet and {N}eumann
  problems on {L}ipschitz domains.
\newblock {\em Mathematische Nachrichten}, 292(10):2165--2173, June 2019.

\bibitem{MR1713241}
M.~Costabel, M.~Dauge, and S.~Nicaise.
\newblock Singularities of {M}axwell interface problems.
\newblock {\em M2AN Math. Model. Numer. Anal.}, 33(3):627--649, 1999.

\bibitem{MARTINS2019}
T.~de~Castro~Martins, A.~K. Sato, F.~S. de~Moura, E.~D. L.~B. de~Camargo, O.~L.
  Silva, T.~B.~R. Santos, Z.~Zhao, K.~M\"{o}eller, M.~B.~P. Amato, J.~L.
  Mueller, R.~G. Lima, and M.~de~Sales Guerra~Tsuzuki.
\newblock A review of electrical impedance tomography in lung applications:
  Theory and algorithms for absolute images.
\newblock {\em Annual Reviews in Control}, May 2019.

\bibitem{MR800331}
M.~Delfour, G.~Payre, and J.-P. Zol\'{e}sio.
\newblock An optimal triangulation for second-order elliptic problems.
\newblock {\em Comput. Methods Appl. Mech. Engrg.}, 50(3):231--261, 1985.

\bibitem{MR2731611}
M.~C. Delfour and J.-P. Zol{\'e}sio.
\newblock {\em Shapes and geometries}, volume~22 of {\em Advances in Design and
  Control}.
\newblock Society for Industrial and Applied Mathematics (SIAM), Philadelphia,
  PA, second edition, 2011.
\newblock Metrics, analysis, differential calculus, and optimization.

\bibitem{MR2309659}
H.~Eckel and R.~Kress.
\newblock Nonlinear integral equations for the inverse electrical impedance
  problem.
\newblock {\em Inverse Problems}, 23(2):475--491, 2007.

\bibitem{MR1158660}
L.~C. Evans and R.~F. Gariepy.
\newblock {\em Measure theory and fine properties of functions}.
\newblock Studies in Advanced Mathematics. CRC Press, Boca Raton, FL, 1992.

\bibitem{MR873244}
A.~Friedman.
\newblock Detection of mines by electric measurements.
\newblock {\em SIAM J. Appl. Math.}, 47(1):201--212, 1987.

\bibitem{MR1017325}
A.~Friedman and V.~Isakov.
\newblock On the uniqueness in the inverse conductivity problem with one
  measurement.
\newblock {\em Indiana Univ. Math. J.}, 38(3):563--579, 1989.

\bibitem{MR3462483}
H.~Garde and K.~Knudsen.
\newblock 3{D} reconstruction for partial data electrical impedance tomography
  using a sparsity prior.
\newblock {\em Discrete Contin. Dyn. Syst.}, (Dynamical systems, differential
  equations and applications. 10th AIMS Conference. Suppl.):495--504, 2015.

\bibitem{MR3447196}
H.~Garde and K.~Knudsen.
\newblock Sparsity prior for electrical impedance tomography with partial data.
\newblock {\em Inverse Probl. Sci. Eng.}, 24(3):524--541, 2016.

\bibitem{MR3621830}
H.~Garde and S.~Staboulis.
\newblock Convergence and regularization for monotonicity-based shape
  reconstruction in electrical impedance tomography.
\newblock {\em Numer. Math.}, 135(4):1221--1251, 2017.

\bibitem{MR3660456}
M.~Giacomini, O.~Pantz, and K.~Trabelsi.
\newblock Certified descent algorithm for shape optimization driven by
  fully-computable {\it a posteriori} error estimators.
\newblock {\em ESAIM Control Optim. Calc. Var.}, 23(3):977--1001, 2017.

\bibitem{MR990595}
K.~Gr\"{o}ger.
\newblock A {$W^{1,p}$}-estimate for solutions to mixed boundary value problems
  for second order elliptic differential equations.
\newblock {\em Math. Ann.}, 283(4):679--687, 1989.

\bibitem{a_GRRE_1989a}
K.~Gr\"{o}ger and J.~Rehberg.
\newblock Resolvent estimates in {$W^{-1,p}$} for second order elliptic
  differential operators in case of mixed boundary conditions.
\newblock {\em Math. Ann.}, 285(1):105--113, 1989.

\bibitem{MR2551487}
R.~Haller-Dintelmann, C.~Meyer, J.~Rehberg, and A.~Schiela.
\newblock H\"{o}lder continuity and optimal control for nonsmooth elliptic
  problems.
\newblock {\em Appl. Math. Optim.}, 60(3):397--428, 2009.

\bibitem{MR2819201}
M.~Hanke, B.~Harrach, and N.~Hyv\"{o}nen.
\newblock Justification of point electrode models in electrical impedance
  tomography.
\newblock {\em Math. Models Methods Appl. Sci.}, 21(6):1395--1413, 2011.

\bibitem{MR3095385}
B.~Harrach.
\newblock Recent progress on the factorization method for electrical impedance
  tomography.
\newblock {\em Comput. Math. Methods Med.}, pages Art. ID 425184, 8, 2013.

\bibitem{MR3628886}
B.~Harrach and M.~N. Minh.
\newblock Enhancing residual-based techniques with shape reconstruction
  features in electrical impedance tomography.
\newblock {\em Inverse Problems}, 32(12):125002, 21, 2016.

\bibitem{MR3126995}
B.~Harrach and M.~Ullrich.
\newblock Monotonicity-based shape reconstruction in electrical impedance
  tomography.
\newblock {\em SIAM J. Math. Anal.}, 45(6):3382--3403, 2013.

\bibitem{MR860040}
E.~J. Haug, K.~K. Choi, and V.~Komkov.
\newblock {\em Design sensitivity analysis of structural systems}, volume 177
  of {\em Mathematics in Science and Engineering}.
\newblock Academic Press, Inc., Orlando, FL, 1986.

\bibitem{MR3626801}
A.~Hauptmann, M.~Santacesaria, and S.~Siltanen.
\newblock Direct inversion from partial-boundary data in electrical impedance
  tomography.
\newblock {\em Inverse Problems}, 33(2):025009, 26, 2017.

\bibitem{MR1607628}
F.~Hettlich and W.~Rundell.
\newblock The determination of a discontinuity in a conductivity from a single
  boundary measurement.
\newblock {\em Inverse Problems}, 14(1):67--82, 1998.

\bibitem{MR2536481}
M.~Hinterm{\"u}ller and A.~Laurain.
\newblock Electrical impedance tomography: from topology to shape.
\newblock {\em Control Cybernet.}, 37(4):913--933, 2008.

\bibitem{MR2886190}
M.~Hinterm\"uller, A.~Laurain, and A.~A. Novotny.
\newblock Second-order topological expansion for electrical impedance
  tomography.
\newblock {\em Adv. Comput. Math.}, 36(2):235--265, 2012.

\bibitem{MR2553181}
N.~Hyv\"{o}nen.
\newblock Approximating idealized boundary data of electric impedance
  tomography by electrode measurements.
\newblock {\em Math. Models Methods Appl. Sci.}, 19(7):1185--1202, 2009.

\bibitem{MR3023421}
N.~Hyv\"{o}nen, P.~Piiroinen, and O.~Seiskari.
\newblock Point measurements for a {N}eumann-to-{D}irichlet map and the
  {C}alder\'{o}n problem in the plane.
\newblock {\em SIAM J. Math. Anal.}, 44(5):3526--3536, 2012.

\bibitem{MR1694840}
M.~Ikehata.
\newblock How to draw a picture of an unknown inclusion from boundary
  measurements. {T}wo mathematical inversion algorithms.
\newblock {\em J. Inverse Ill-Posed Probl.}, 7(3):255--271, 1999.

\bibitem{MR1776482}
M.~Ikehata and S.~Siltanen.
\newblock Numerical method for finding the convex hull of an inclusion in
  conductivity from boundary measurements.
\newblock {\em Inverse Problems}, 16(4):1043--1052, 2000.

\bibitem{MR2262748}
V.~Isakov.
\newblock On uniqueness in the inverse conductivity problem with local data.
\newblock {\em Inverse Probl. Imaging}, 1(1):95--105, 2007.

\bibitem{a_KAKUST_2018a}
D.~Kalise, K.~Kunisch, and K.~Sturm.
\newblock Optimal actuator design based on shape calculus.
\newblock {\em Mathematical Models and Methods in Applied Sciences},
  28(13):2667--2717, dec 2018.

\bibitem{MR3221605}
C.~Kenig and M.~Salo.
\newblock Recent progress in the {C}alder\'{o}n problem with partial data.
\newblock In {\em Inverse problems and applications}, volume 615 of {\em
  Contemp. Math.}, pages 193--222. Amer. Math. Soc., Providence, RI, 2014.

\bibitem{MR2299741}
C.~E. Kenig, J.~Sj\"{o}strand, and G.~Uhlmann.
\newblock The {C}alder\'{o}n problem with partial data.
\newblock {\em Ann. of Math. (2)}, 165(2):567--591, 2007.

\bibitem{MR1662460}
A.~Kirsch.
\newblock Characterization of the shape of a scattering obstacle using the
  spectral data of the far field operator.
\newblock {\em Inverse Problems}, 14(6):1489--1512, 1998.

\bibitem{MR2209749}
K.~Knudsen.
\newblock The {C}alder\'{o}n problem with partial data for less smooth
  conductivities.
\newblock {\em Comm. Partial Differential Equations}, 31(1-3):57--71, 2006.

\bibitem{MR3551265}
K.~Krupchyk and G.~Uhlmann.
\newblock The {C}alder\'{o}n problem with partial data for conductivities with
  3/2 derivatives.
\newblock {\em Comm. Math. Phys.}, 348(1):185--219, 2016.

\bibitem{langtangen2017solving}
H.~Langtangen and A.~Logg.
\newblock {\em Solving PDEs in Python: The FEniCS Tutorial I}.
\newblock Simula SpringerBriefs on Computing. Springer International
  Publishing, 2017.

\bibitem{MR3535238}
A.~Laurain and K.~Sturm.
\newblock Distributed shape derivative {\it via} averaged adjoint method and
  applications.
\newblock {\em ESAIM Math. Model. Numer. Anal.}, 50(4):1241--1267, 2016.

\bibitem{fenics:book}
A.~Logg, K.-A. Mardal, and G.~N. Wells, editors.
\newblock {\em Automated Solution of Differential Equations by the Finite
  Element Method}, volume~84 of {\em Lecture Notes in Computational Science and
  Engineering}.
\newblock Springer, 2012.

\bibitem{MR1274152}
S.~Nicaise and A.-M. S\"{a}ndig.
\newblock General interface problems. {I}, {II}.
\newblock {\em Math. Methods Appl. Sci.}, 17(6):395--429, 431--450, 1994.

\bibitem{MR1215733}
J.~Soko{\l}owski and J.-P. Zol{\'e}sio.
\newblock {\em Introduction to shape optimization}, volume~16 of {\em Springer
  Series in Computational Mathematics}.
\newblock Springer-Verlag, Berlin, 1992.
\newblock Shape sensitivity analysis.

\bibitem{MR1174044}
E.~Somersalo, M.~Cheney, and D.~Isaacson.
\newblock Existence and uniqueness for electrode models for electric current
  computed tomography.
\newblock {\em SIAM J. Appl. Math.}, 52(4):1023--1040, 1992.

\bibitem{phdKevin}
K.~Sturm.
\newblock {\em On shape optimization with non-linear partial differential
  equations}.
\newblock PhD thesis, Technische Universit\"{a}t Berlin, October 2014.

\bibitem{a_ST_2015a}
K.~Sturm.
\newblock Minimax lagrangian approach to the differentiability of nonlinear pde
  constrained shape functions without saddle point assumption.
\newblock {\em SIAM Journal on Control and Optimization}, 53(4):2017--2039,
  2015.

\bibitem{MR3584578}
K.~Sturm.
\newblock Shape optimization with nonsmooth cost functions: from theory to
  numerics.
\newblock {\em SIAM J. Control Optim.}, 54(6):3319--3346, 2016.

\bibitem{MR2583281}
F.~Tr\"{o}ltzsch.
\newblock {\em Optimal control of partial differential equations}, volume 112
  of {\em Graduate Studies in Mathematics}.
\newblock American Mathematical Society, Providence, RI, 2010.
\newblock Theory, methods and applications, Translated from the 2005 German
  original by J\"{u}rgen Sprekels.

\bibitem{Virieux2009}
J.~Virieux and S.~Operto.
\newblock An overview of full-waveform inversion in exploration geophysics.
\newblock {\em {GEOPHYSICS}}, 74(6):WCC1--WCC26, Nov. 2009.

\bibitem{b_ZI_1989a}
W.~P. Ziemer.
\newblock {\em Weakly Differentiable Functions}.
\newblock Springer New York, 1989.

\end{thebibliography}

\end{document}